\documentclass[11pt]{amsart}

\usepackage{paralist, amsmath, amsthm, amssymb, color, graphicx, hyperref}
\usepackage[margin=1.4in]{geometry}
\DeclareGraphicsExtensions{.jpg,.pdf,.eps}
\graphicspath{{./,./Figures/}}

\newtheorem{thm}{Theorem}
\numberwithin{equation}{section}
\numberwithin{thm}{section}
\newtheorem{theorem}[thm]{Theorem}

\newtheorem{prop}[thm]{Proposition}
\newtheorem{lemma}[thm]{Lemma}
\newtheorem{cor}[thm]{Corollary} 
\newtheorem{corollary}[thm]{Corollary} 
\newtheorem{claim}[thm]{Claim}

\theoremstyle{definition}
\newtheorem{definition}[thm]{Definition}

\theoremstyle{remark}

\newtheorem{remark}[thm]{Remark}

\newtheorem{example}[thm]{Example}
\newtheorem{question}[thm]{Question}

\newcommand{\al}{\alpha}
\newcommand{\be}{\beta}
\newcommand{\ga}{\gamma}
\newcommand{\de}{\delta}

\newcommand{\si}{\sigma}
\newcommand{\la}{\lambda}

\newcommand{\De}{\Delta}
\newcommand{\Ga}{\Gamma}
\newcommand{\La}{\Lambda}
\newcommand{\all}{\al^{(0)}}
\newcommand{\bee}{\al^{(1)}}

\newcommand{\CC}{\mathbb{C}}

\newcommand{\QQ}{\mathbb{Q}}
\newcommand{\RR}{\mathbb{R}}
\newcommand{\ZZ}{\mathbb{Z}}
\newcommand{\NN}{\mathbb{N}}

\newcommand{\mA}{\mathcal{A}}
\newcommand{\mB}{\mathcal{B}}
\newcommand{\mC}{\mathcal{C}}
\newcommand{\mD}{\mathcal{D}}

\newcommand{\mG}{\mathcal{G}}

\newcommand{\mP}{\mathcal{P}}
\newcommand{\mR}{\mathcal{R}}
\newcommand{\mT}{\mathcal{T}}
\newcommand{\mU}{\mathcal{U}}
\newcommand{\mW}{\mathcal{W}}

\newcommand{\mZ}{\mathcal{Z}}

\newcommand{\xx}{\textbf{x}}

\newcommand{\kk}{\textbf{k}}
\newcommand{\nn}{\textbf{n}}
\renewcommand{\tt}{\textbf{t}}

\renewcommand{\SS}{\textbf{S}}
\newcommand{\hSS}{{\widehat{\textbf{S}}}}

\newcommand{\hw}{\hat{w}}
\newcommand{\tT}{\tilde{\mT}}
\newcommand{\tD}{\tilde{\mD}}
\newcommand{\tw}{\tilde{w}}
\newcommand{\tphi}{\tilde{\phi}}
\newcommand{\tpsi}{\tilde{\psi}}
\newcommand{\tP}{\widetilde{P}}
\newcommand{\tR}{\tilde{R}}

\newcommand{\bu}{\bullet}
\newcommand{\Cat}{\textrm{Cat}}

\newcommand{\Id}{\textrm{Id}}
\DeclareMathOperator{\one}{\textbf{1}}

\DeclareMathOperator{\ch}{child}

\DeclareMathOperator{\ls}{lsib}
\DeclareMathOperator{\pos}{pos}
\DeclareMathOperator{\pol}{Pol}
\DeclareMathOperator{\lab}{lab}

\DeclareMathOperator{\leaf}{leaf}
\DeclareMathOperator{\wid}{wid}
\DeclareMathOperator{\rank}{rank}
\DeclareMathOperator{\config}{config}
\DeclareMathOperator{\cadet}{cadet}
\DeclareMathOperator{\parent}{parent}
\DeclareMathOperator{\energy}{energy}
\DeclareMathOperator{\drift}{drift}

\DeclareMathOperator{\Seq}{Seq}
\DeclareMathOperator{\asc}{asc}
\DeclareMathOperator{\des}{des}

\DeclareMathOperator{\ee}{\textrm{e}}
\DeclareMathOperator{\vv}{\textrm{v}}
\DeclareMathOperator{\comp}{\textrm{c}}

\newcommand{\ds}{\displaystyle}
\newcommand{\fig}[3]{\begin{figure}[h!]\begin{center}\includegraphics[#1]{#2}\end{center}\caption{#3}\label{fig:#2}\end{figure}}

\renewcommand{\ni}{\noindent}

\title[Deformations of the braid arrangement and Trees]{Deformations of the braid arrangement and Trees\\[2ex]
 \footnotesize\mdseries Dedicated to Ira Gessel for his retirement}
\author{Olivier Bernardi}
\thanks{This work was partially supported by NSF grant DMS-1400859. 
Part of it was completed while visiting the MIT in the Spring 2016. Many thanks to Ira Gessel for suggesting this problem and providing many valuable inputs, and to Sam Hopkins and Alexander Postnikov for interesting discussions. We are also grateful to an anonymous referee for suggesting additional references and making many other valuable comments.}
\address{Olivier Bernardi, Brandeis University, 415 South Street, Waltham, MA 02453, USA.}
\date{\today}

\begin{document}

\begin{abstract}
We establish general counting formulas and bijections for deformations of the braid arrangement. Precisely, we consider real hyperplane arrangements such that all the hyperplanes are of the form $x_i-x_j=s$ for some integer $s$. Classical examples include the braid, Catalan, Shi, semiorder and Linial arrangements, as well as graphical arrangements. We express the number of regions of any such arrangement as a signed count of decorated plane trees. The characteristic and coboundary polynomials of these arrangements also have simple expressions in terms of these trees. 

We then focus on certain ``well-behaved'' deformations of the braid arrangement that we call \emph{transitive}. This includes the Catalan, Shi, semiorder and Linial arrangements, as well as many other arrangements appearing in the literature. For any transitive deformation of the braid arrangement we establish a simple bijection between regions of the arrangement and a set of labeled plane trees defined by local conditions. This answers a question of Gessel. 
\end{abstract}

\maketitle

\section{Introduction}
In this article we establish enumerative and bijective results about classical families of hyperplane arrangements. 
Specifically, we consider real hyperplane arrangements made of a finite number of hyperplanes of the form 
$$H_{i,j,s}=\{(x_1,\ldots,x_n)\in \RR^n ~|~ x_i-x_j=s\},$$ 
with $i,j\in\{1,\ldots,n\}$ and $s\in \ZZ$. We shall call them \emph{deformations of the braid arrangement}. 
In particular, given an integer $n$ and a finite set of integers $S$, the \emph{$S$-braid arrangement in dimension $n$}, denoted $\mA_S(n)$, 
is the arrangement made of the hyperplanes $H_{i,j,s}$ for all $1\leq i<j\leq n$ and all $s\in S$. 
Classical examples include the \emph{braid}, \emph{Catalan}, \emph{Shi}, \emph{semiorder}, and \emph{Linial} arrangements, which correspond to $S=\{0\},~\{-1,0,1\},~\{0,1\},~\{-1,1\}$, and $\{1\}$ respectively. These arrangements are represented\footnote{Here and later, the arrangements are represented by drawing their intersection with the hyperplane $H_0=\{(x_1,\ldots,x_n)~|~x_1+\ldots+x_n=0\}$, which is orthogonal to all the hyperplanes of any deformation of the braid arrangement.} 
in Figure~\ref{fig:classical-arrangements}.
We refer the reader to~\cite{Orlik:hyperplane-arrangements} or \cite{Stanley:hyperplane-arrangements} for an introduction to the general theory of hyperplane arrangements.

\fig{width=\linewidth}{classical-arrangements}{The braid, Catalan, Shi, semiorder, and Linial arrangements in dimension $n=3$ (seen from the direction $(1,1,1)$).}

There is an extensive literature on counting regions of deformations of the braid arrangement, starting with the work of Shi \cite{Shi:nb-regions,Shi:nb-regions-Weyl}. Important seminal results on this enumerative question were established by Stanley~\cite{Stanley:hyperplane-interval-orders-overview,Stanley:hyperplane-tree-inversions}, Postnikov and Stanley~\cite{Postnikov:coxeter-hyperplanes}, and Athanasiadis~\cite{Athanasiadis:finite-field-method,Athanasiadis:PhD}. Since then, the subject has become quite popular among combinatorialists, and many beautiful counting formulas and bijections were discovered for various families of arrangements; see in particular \cite{Athanasiadis:free-deformations,Athanasiadis:Catalan-Hyperplanes,Athanasiadis:free-deformations,Athanasiadis:bijection-Shi,Ardila:Tutte-hyperplanes,Armstrong:Shi-Ish,Corteel:hyperplane-valued-graphs,Gessel:labeled-trees-Shur-positive,Forge:Linial,Headley:hyperplane-Weyl-groups,Hopkins:bigraphical-arrangements,Hopkins:G-Parking-labelling,Leven:bijections-Ish-Shi} (see also Section~\ref{sec:definitions} for additional references). 

About a decade ago, an interesting pattern was observed by Ira Gessel. In an unpublished manuscript, Gessel obtained an equation for the generating function of labeled binary trees counted according to ascents and descents along left or right edges\footnote{Gessel's result was then rederived in two different ways by Kalikow \cite{Kalikow:symmetries-in-trees} and Drake \cite{Drake:Thesis}.}. By specializing his equation, Gessel observed (based on known results for arrangements) that each of the five classical arrangements families $\mA_S$ defined above (braid, Catalan, Shi, semi-order, Linial) can be associated to a simple family $\mB_S$ of binary trees (characterized by some ascent and descent conditions), in such a way that the regions of $\mA_S(n)$ are equinumerous to trees with $n$ nodes in $\mB_S$ (see Section~\ref{sec:known-arrangements}). This opened the question of explaining these mysterious enumerative identities between regions of arrangements and binary trees; and attempts at answering this question were made for instance in \cite{Corteel:hyperplane-valued-graphs,Forge:Linial}.

It is the goal of this paper to explain that Gessel's observation is not a mere coincidence, but rather the manifestation of a more general theory which unifies and extends many previous results. This paper has two parts. The first part gives enumerative results which apply to every deformation of the braid arrangement. The second part establishes bijections for every deformations of the braid arrangement satisfying a certain \emph{transitivity condition}. In the rest of this introduction, we give a detailed outline of the paper and a preview of our results.

In the first part of the paper (Section~\ref{sec:definitions} to~\ref{sec:proof}), we deal with arbitrary deformations of the braid arrangement. 
For any such arrangement in dimension $n$, we express the number of regions as a \emph{signed} count of some (decorated, labeled, $k$-ary) trees with $n$ nodes, that we call \emph{boxed trees}. 
These results are given in Section~\ref{sec:regions-shaken} for the case of $S$-braid arrangements (Theorem~\ref{thm:signed-count}), and in Section~\ref{sec:multi-shaken} for the  general case (Theorem~\ref{thm:signed-count-multi}). When the arrangement satisfy the \emph{transitivity condition}, our counting result simplifies greatly and the regions are shown to be equinumerous to a family of (labeled, plane) trees satisfying certain ascent and descent conditions (Theorems~\ref{thm:unsigned-count} and~\ref{thm:unsigned-count-multi}).
In Section~\ref{sec:char-poly}, we generalize the previous counting results by expressing the characteristic polynomial and coboundary polynomial (equivalently, Tutte polynomial) of any deformation of the braid arrangement in terms of boxed trees (Theorem~\ref{thm:energy-count-multi}). 
In Section~\ref{sec:GF}, we use the preceding expressions in terms of boxed trees in order to establish equations for the generating function of the number of regions, and more generally for the generating function of coboundary polynomials. We thereby recover and extend many known results.

All the proofs for our counting results are gathered in Section~\ref{sec:proof}. The proof is inspired by statistical mechanics considerations (although some of the arguments can alternately be interpreted in light of the finite field method), and has three steps which could informally be described as follows: 
\begin{compactitem}
\item at the first step (Lemma~\ref{lem:1-multi}) we express the coboundary polynomials of the arrangements as a signed count of some decorated graphs (directly encoding the central subarrangements), 
\item at the second step (Lemma~\ref{lem:2-multi}) we express the generating function of these decorated graphs in terms of a 1-dimensional gas model (by using a version of Mayers' theory of cluster integrals),
\item at the third step (Lemma~\ref{lem:3-multi}) we rearrange the information about the gas model configurations in order to encode them in terms of boxed trees.
\end{compactitem}



In the second part of the paper (Section~\ref{sec:bij}), we establish bijections for regions of \emph{transitive} deformations of the braid arrangement. These are arrangements made up of hyperplanes $H_{i,j,s}$, where the triples $(i,j,s)$ satisfy certain conditions (see Definition~\ref{def:transitive-multi}). Examples of transitive arrangements include the $S$-braid arrangements for $S\subseteq \{-1,0,1\}$, or $S$ an interval containing 1, and the $G$-Shi arrangement for any graph~$G$. As mentioned earlier, for transitive arrangements our enumerative result simplifies and the regions are found to be equinumerous to some simple families of (labeled, plane) trees. In Section~\ref{sec:bij} we establish, for every transitive arrangement, a direct bijection between the regions of the arrangement and the corresponding family of trees (Theorem~\ref{thm:bij-gle}). Our bijection is surprising explicit: given a tree it is very simple to determine all the linear inequalities that define the corresponding region of the arrangement. In order to illustrate this fact, we now present the bijection in the case of the Linial arrangement, which is illustrated in Figure~\ref{fig:Linial-bijection}.

\begin{example} \label{exp:Linial-intro}
The regions of the Linial arrangement $\mA_{\{1\}}(n)$ are in bijection with the set $\mT_{\{1\}}(n)$ of binary trees with $n$ labeled node satisfying the following condition: \emph{for all node $u\in[n]$ having at least one child which is a node, the rightmost such child $v$ is such that $v<u$}. 

The bijection $\Psi$ associates to any tree $T$ in $\mT_{\{1\}}(n)$, the region $\rho(T)$ of $\mA_{\{1\}}(n)$ made of the points $(x_1,\ldots,x_n)$ satisfying the following inequalities for all $1\leq i<j\leq n$: \emph{$x_i-x_j<1$ if and only if either $\drift(i)\leq \drift(j)$ or $\drift(i)=\drift(j)+1$ and $i$ appears before $j$ in the postfix order of $T$}, where $\drift(v)$ is the number of ancestors of $v$ (including $v$) which are right-children. See Figure~\ref{fig:Linial-bijection} for the case $n=3$.
\end{example}
 
\fig{width=.7\linewidth}{Linial-bijection}{(a) The condition defining the trees in $\mT_{\{1\}}(n)$. (b) The bijection $\Psi$ between the regions of the Linial arrangement $\mA_{\{1\}}(3)$ and the trees in $\mT_{\{1\}}(3)$.}

In the case of the Catalan arrangement $\mA_{\{-1,0,1\}}(n)$ (and the generalization $\mA_{[-m..m]}(n)$) our bijection builds on a classical construction. In the case of the Shi arrangement $\mA_{\{0,1\}}(n)$ (and the generalization $\mA_{[-m+1..m]}(n)$) our bijection is a close relative to a bijection of Athanasiadis and Linusson~\cite{Athanasiadis:bijection-Shi}; see Section~\ref{sec:bij-link-Shi}. 
But already in the case of the Linial arrangement $\mA_{\{1\}}(n)$, no direct bijection was known between the regions of $\mA_{\{1\}}(n)$ and other combinatorial objects\footnote{The bijections in \cite{Corteel:hyperplane-valued-graphs,Forge:Linial} are not defined on the regions themselves, but rather on some combinatorial objects, called \emph{gain graphs without broken circuit}, which are known to be equinumerous to the regions by a non-bijective argument (Zaslavsky formula \cite{Zaslavsky:region-hyperplanes} allows to express the number of regions as a signed count of gain graphs, and after a suitable sign-reversing involution, one is left with the gain graphs without broken circuit).} (although several families of trees where known to be equinumerous to the regions of $\mA_{\{1\}}(n)$~\cite{Postnikov:coxeter-hyperplanes,Postnikov:intransitive-trees,Postnikov:Thesis}). We give short direct proofs of our bijective results in Section~\ref{sec:bij-m=1} in the case of the Catalan, Shi, semiorder and Linial arrangements. However, in the general case, we only prove surjectivity of our mapping and conclude to bijectivity by invoking the counting results established in the first part of the paper.

In Section~\ref{sec:conclusion}, we explain the relation between the trees described in Example~\ref{exp:Linial-intro} and the \emph{local binary search trees} which were known to be equinumerous to the regions of $\mA_{\{1\}}(n)$, and we conclude with some remarks and open questions.

\medskip

\section{Definitions and known results}\label{sec:definitions}
In this section we set our notation about arrangements and trees, and we recall some known counting results for the regions of the deformations of the braid arrangement.

\medskip
\subsection{Basic definitions.}~\\
A \emph{real hyperplane arrangement} in dimension $n$ is a set $\mA$ of affine hyperplanes of $\RR^n$. For instance, the braid arrangement $\mA_{\{0\}}(n)$ is the set $\{H_{i,j,0}\}_{1\leq i<j\leq n}$ of ${n \choose 2}$ hyperplanes. The \emph{regions of $\mA$} are connected components of $\ds \RR^n\setminus \bigcup_{H\in \mA}H$. 
We denote by $r_\mA$ the number of regions. For instance, it is easy to see that $r_{\mA_{\{0\}}(n)}=n!$.

We denote $\NN=\{0,1,2,\ldots\}$. For $a,b\in \ZZ$, we denote $[a..b]=\{i\in \ZZ~|~a\leq i\leq b\}$, and $[b]=[1..b]$. For a set $S$, we denote by $|S|$ the cardinality. For a ring $R$, we denote by $R[t]$ and $R[[t]]$ respectively the set of polynomials and formal power series in $t$ with coefficients in $R$. We extend the notation to several variables so that $R[y][[t_1,t_2]]$ is the set of formal power series in $t_1,t_2$ with coefficients in $R[y]$. For $G(t)\in R[[t]]$, we denote by $[t^k]G(t)$ the coefficient of $t^k$ in $G$.

\medskip
\subsection{Labeled plane trees.}\label{sec:trees}~\\
A \emph{tree} is a finite connected acyclic graph. A \emph{rooted plane tree} is a tree with a vertex distinguished as the \emph{root}, together with an ordering of the children of each vertex. A vertex in a rooted plane tree is a \emph{leaf} if it has no children, and a \emph{node} otherwise. 
We think of the children of a node $u$ of a rooted plane tree as being ``ordered from left to right'', and we adopt this convention in all our figures. For a child $v$ of $u$ we call \emph{left siblings of $v$} the children of $u$ (including leaves) which are on the left of $v$, that is, smaller than $v$ in the ordering of the children of $u$. 

We denote by $\mT$ the set of rooted plane trees with labeled nodes (if the tree has $n$ nodes, then the nodes have distinct labels in $[n]$, while the leaves are not labeled). We denote by $\mT^{(m)}$ the set of \emph{$(m+1)$-ary} trees in $\mT$ (i.e. the trees such that every node has $m+1$ children). We also denote by $\mT^{(m)}(n)$ the set of trees with $n$ nodes in $\mT^{(m)}$.
 A tree in $\mT^{(2)}(13)$ is represented in Figure~\ref{fig:boxed-tree}(a). 
For a non-root node $v$ of $T\in\mT$, we denote by $\parent(v)$ the parent of $v$, and $\ls(v)$ the number of left-siblings of $v$.
We compare nodes of $T$ according to their labels, so that $u<v$ means that the label of $u$ is less than the label of $v$.

\fig{width=.8\linewidth}{boxed-tree}{(a) A (rooted plane node-labeled) tree in $\mT^{(2)}(13)$. (b) A $\SS$-boxed tree for $S=\{-1,2\}$ (note for instance that $-1\in S$ imposes that if a box contains both a node $u$ and its middle child $v$, then $u>v$).}

\medskip
\subsection{Known counting results about deformed braid arrangements.}\label{sec:known-arrangements}~\\ 
As mentioned in the introduction, it has been observed by Gessel, that for every set $S\subseteq \{-1,0,1\}$, the regions of the $S$-braid arrangement $\mA_S(n)$ are equinumerous to a certain family of trees in $\mT^{(1)}(n)$. Up to symmetry, we only need to consider the braid, Catalan, Shi, semiorder, and Linial arrangements, which are represented in Figure~\ref{fig:classical-arrangements}. We now describe the corresponding families of trees (see Figure~\ref{fig:condition-Shi-so-Linial}).
A non-root node $v$ of a tree $T\in\mT^{(1)}$ is called \emph{left node} (resp. \emph{right node}) if $\ls(v)=0$ (resp. $\ls(v)=1$). 
Here are the identities that Gessel observed (based on known counting results about hyperplane arrangements, and a new formula he established for trees in $\mT^{(1)}(n)$ counted according to the number of left and right ascents and descents):
\begin{itemize}
\item The regions of the \emph{Catalan arrangement} $\mA_{\{-1,0,1\}}(n)$ are equinumerous to the trees in $\mT^{(1)}(n)$.
\item The regions of the \emph{Shi arrangement} $\mA_{\{0,1\}}(n)$ are equinumerous to the trees in $\mT^{(1)}(n)$ such that
\begin{compactitem}
\item[(i)] for every right node $v$, $\parent(v)>v$. 
\end{compactitem}
\item The regions of the \emph{semiorder arrangement} $\mA_{\{-1,1\}}$ are equinumerous to the trees in $\mT^{(1)}(n)$ such that 
\begin{compactitem}
\item[(ii)] for every left node $v$, if the right-sibling of $v$ is a leaf then $\parent(v)>v$.
\end{compactitem}
\item The regions of the \emph{Linial arrangement} $\mA_{\{1\}}(n)$ are equinumerous to the trees in $\mT^{(1)}(n)$ such that 
\begin{compactitem}
\item[(iii)] for every left node $v$, $\parent(v)>v$, and for every right node $v$, $\parent(v)\!<\!v$.
\end{compactitem}
\end{itemize}

Based on these observations, Gessel raised the question of finding a uniform, possibly bijective, explanation of these five correspondences between arrangements and binary trees (see~\cite{Gessel:open-problems} or \cite[Section 1]{Gessel:labeled-trees-Shur-positive}). It is the goal of this paper to provide such an explanation (and more).
Let us mention however that the family of trees that will appear in the framework of the current paper for the Linial arrangement is not given by the Condition (iii), but instead by the Condition (iii') represented in Figure~\ref{fig:condition-Shi-so-Linial}. The bijective link between Conditions (iii) and (iii') is explained in Section~\ref{sec:bij-link-Linial}. In fact, Condition (iii') is simply the combination of Conditions (i) and (ii).  The fact that the family of trees associated to the intersection  $\mA_{\{1\}}(n)=\mA_{\{0,1\}}(n)\cap\mA_{\{-1,1\}}(n)$ is the intersection of the family of trees associated to $\mA_{\{0,1\}}(n)$ and $\mA_{\{-1,1\}}(n)$ is an instance of a general feature of the theory developed in the present paper (see Remark \ref{rk:intersection-mTS}). 

\fig{width=\linewidth}{condition-Shi-so-Linial}{The conditions (i), (ii), (iii) appearing in the literature for the classes of trees equinumerous to the regions of the Shi, semiorder and Linial arrangements. The characterization (iii') proved in this paper for the Linial arrangement appears to be new (see Section~\ref{sec:bij-link-Linial} for a bijection between (iii) and (iii')). Nodes are represented by labeled discs, while leaves are represented by black dots (here, the nature of some vertices is left unspecified).}

Let us now recall the relevant references for the identities observed above, and some natural generalizations.
 First, observe that $\mT^{(1)}(n)$ has cardinality $n!\Cat(n)!=\frac{(2n)!}{(n+1)!}$, because there are $\Cat(n)$ binary trees with $n$ nodes and $n!$ ways of labeling their nodes. More generally, $\mT^{(m)}(n)$ has cardinality $\frac{((m+1)n)!}{(mn+1)!}$. On the other hand, it is classical that the \emph{$m$-Catalan arrangement} $\mA_{[-m..m]}(n)$ has $\frac{((m+1)n)!}{(mn+1)!}$ regions (see e.g.~\cite[Section 4]{Stanley:hyperplane-interval-orders-overview}). We will recall a bijective proof of this fact in Section~\ref{sec:bij-prelim}. 

Next, observe that the number of trees in $\mT^{(1)}(n)$ satisfying Condition (i) is $(n+1)^{n-1}$ because these trees are easily seen to be in bijection with  Cayley trees with $n+1$ vertices. The fact that the number of regions of the Shi arrangement $\mA_{\{0,1\}}(n)$ is also $(n+1)^{n-1}$ was first established by Shi~\cite{Shi:nb-regions}. In fact, Shi further showed that for all $m$ the  number of regions of the $m$-Shi arrangement $\mA_{[-m+1..m]}(n)$ is  $(mn+1)^{n-1}$, which is  the number of $m$-parking functions of size $n$. Since then, at least two distinct bijective proofs of this fact have been given~\cite{Athanasiadis:bijection-Shi,Stanley:hyperplane-tree-inversions}, besides non-bijective proofs \cite{Postnikov:coxeter-hyperplanes,Corteel:hyperplane-valued-graphs,Athanasiadis98deformationsof}. We discuss these bijections further in Section \ref{sec:bij-link-Shi}. 
As we will see,  $(mn+1)^{n-1}$ is also the number of trees in $\mT^{(m)}(n)$ such that if a node $v$ is the right-most child of a node $u$, then $u>v$ (this generalizes (i)). 

The identity between the regions of the semiorder arrangement and the trees in $\mT^{(1)}(n)$ satisfying Condition (ii) is equivalent to a result of Chandon~\cite{Chandon:semiorder}.
More generally, the \emph{$m$-semiorder arrangement} $\mA_{[-m..m]\setminus \{0\}}(n)$ has regions equinumerous to trees in $\mT^{(m)}(n)$ such that if a node $v$ is the leftmost child of a node $u$, and all its siblings are leaves, then $u>v$ (this generalizes (ii)). This fact is easily deduced from the generating function equation given in~\cite[Theorem 7.1]{Postnikov:coxeter-hyperplanes}.

Lastly, the identity between the regions of the  Linial arrangement and the counting sequence of the trees in $\mT^{(1)}(n)$ satisfying (iii) was conjectured by Linial and Ravid, and proved in~\cite{Postnikov:coxeter-hyperplanes} and independently in~\cite{Athanasiadis:finite-field-method} by equating two generating functions. However, as mentioned above, the family of trees which appear naturally in our framework are characterized by Condition (iii') instead of Condition (iii). More generally,  the \emph{$m$-Linial arrangement} $\mA_{[-m+1..m]\setminus \{0\}}(n)$ has regions equinumerous to the subset of trees in $\mT^{(m)}(n)$ such that  if a node $v$ is the right-most child of a node $u$, then $u>v$, and moreover, if a node $v$ is the leftmost child of a node $u$, and all its siblings are leaves, then $u>v$ (this generalizes (iii')).


Although we cannot give an exhaustive bibliography about the enumerative study of deformations of the braid arrangement, we should mention a few additional references which are relevant to the present article. Formulas for the number of regions of several additional deformations of the braid arrangements (for instance $\mA_{[-\ell..m]}(n)$ for $\ell\geq -1$) are given in~\cite{Postnikov:coxeter-hyperplanes}.
The characteristic and coboundary polynomials of some of the arrangements above have been computed in~\cite{Athanasiadis98deformationsof,Ardila:Tutte-hyperplanes}. In a different direction, several deformations of the braid arrangements associated to a graph $G=([n],E)$ have been considered in the literature. 
The most classical is the \emph{$G$-graphical arrangement} made of the hyperplanes $H_{i,j,0}$, for all $\{i,j\}\in E$. Another important example is the \emph{$G$-Shi arrangement} considered for instance in~\cite{Armstrong:Shi-Ish,Athanasiadis:bijection-Shi}. This arrangement is made of the hyperplanes $H_{i,j,0}$ for all $i,j\in[n]$, and $H_{i,j,1}$ for all $\{i,j\}\in E$ with $i<j$ (so that it is the braid arrangement if $G$ has no edge, and the Shi arrangement if $G=K_n$). In yet another direction, several authors have considered deformed braid arrangements with hyperplanes $H_{i,j,s}$ for generic, non-integer values of $s$ (see e.g.~\cite{Postnikov:coxeter-hyperplanes,Stanley:hyperplane-tree-inversions,Stanley:hyperplane-arrangements,Hopkins:bigraphical-arrangements}), but we will not consider such situations here.



\medskip

\section{Counting regions of $S$-braid arrangements}\label{sec:regions-shaken}
In this section we present our counting results for the regions of $S$-braid arrangements.
Throughout this section, $S$ is a finite set of integers, $m=\max(|s|,~s\in S)$, and $n$ is a non-negative integer.

We start with the definition of \emph{$S$-boxed trees} (Definition \ref{def:boxed-trees}), and then express the number of regions of $\mA_S(n)$ as a signed count of $S$-boxed trees (Theorem \ref{thm:signed-count}). Then we restrict our attention to \emph{transitive sets} $S$ (Definition \ref{def:transitive}) and for them we express the number of regions of $\mA_S(n)$ as an unsigned count of trees (Theorem \ref{thm:unsigned-count}).

In order to define $S$-boxed trees, we first need to define $S$-cadet sequences.

\begin{definition} 
\begin{compactitem}
\item Let $T$ be a tree in $\mT$, and let $u$ be a node. If one of the children of $u$ is a node, we call the rightmost such child the \emph{cadet-node} of $u$, and denote it by $\cadet(u)$.\footnote{The term \emph{cadet} is used here in its genealogical meaning of \emph{youngest heir}.}
\item A \emph{cadet sequence} is a non-empty sequence $(v_1,\ldots,v_{k})$ of nodes such that for all $i$ in $[k-1]$, $v_{i+1}=\cadet(v_i)$.
\item A \emph{$S$-cadet sequence} is a cadet sequence $(v_1,\ldots,v_{k})$ such that for all $1\leq i<j\leq k$, if $\ds \sum_{p=i+1}^{j}\ls(v_p)\in S\cup\{0\}$ then $v_i<v_j$, and if $\ds -\sum_{p=i+1}^{j}\ls(v_p)\in S$ then $v_i>v_j$.
\end{compactitem}
\end{definition}

Note that an $S$-cadet sequence $(v_1,\ldots,v_{k})$ of $T\in\mT^{(m)}$ satisfies in particular $\ls(v_{j})\in[0..m]\setminus \{s\in S~|~-s\in S\}$ for all $j\in[2..k]$. \\

\begin{example} 
Let $T\in\mT^{(m)}$.
\begin{compactitem}
\item For $S=[-m..m]$, the $S$-cadet sequences of $T$ contain a single vertex.
\item For $S=[-\ell..m]$ with $0\leq \ell\leq m$, the $S$-cadet sequences of $T$ are the cadet sequences $(v_1,\ldots,v_k)$ satisfying $v_1<v_2<\cdots<v_k$ and $\ls(v_{p})\in [\ell+1..m]$ for all $p\in[2..k]$.
\item For $S=[-\ell..m]\setminus \{0\}$ with $0\leq \ell\leq m$, the $S$-cadet sequences of $T$ are the cadet sequences $(v_1,\ldots,v_k)$ satisfying $v_1<v_2<\cdots<v_k$ and $\ls(v_{p})\in\{0\}\cup [\ell+1..m]$ for all $p\in[2..k]$.
\item For $S=\{-2,0,1,2\}$, the $S$-cadet sequences of $T$ have size at most 2, and the $S$-cadet sequences of size 2 are of the form $(v_1,v_2)$ with $\ls(v_2)=1$ and $v_1<v_2$.
\end{compactitem}
\end{example}

\begin{definition} \label{def:boxed-trees}
A \emph{boxed tree} is a pair $(T,B)$, where $T$ is in $\mT$, and $B$ is a set of cadet sequences partitioning the set of nodes of $T$ (that is, every node of $T$ is contained in exactly one cadet sequence in $B$). The pair $(T,B)$ is an \emph{$S$-boxed tree} if $T\in\mT^{(m)}$, and $B$ contains only $S$-cadet sequences. We denote by $\mU_S(n)$ the set of $S$-boxed trees with $n$ nodes.
\end{definition}

We represent boxed trees as trees decorated with boxes partitioning the nodes into cadet sequences, as in Figure~\ref{fig:boxed-tree}(b). We can now state the main result of this section.

\begin{thm}\label{thm:signed-count}
Let $S$ be a finite set of integers and $n$ be a positive integer. 
The number of regions of the hyperplane arrangement $\mA_S(n)$ is
\begin{equation}\label{eq:signed-count}
r_S(n)=\sum_{(T,B)\in \mU_S(n)}(-1)^{n-|B|}.
\end{equation}
\end{thm}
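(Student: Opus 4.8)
### Proof Proposal

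The plan is to establish the signed count formula via the finite-field method (Athanasiadis) combined with a bijective interpretation of the relevant linear-algebraic count over $\mathbb{F}_q$. The starting point is the observation that, since every hyperplane $H_{i,j,s}$ of $\mA_S(n)$ has an integer right-hand side $s$, the arrangement is defined over $\ZZ$, and hence for all sufficiently large primes $q$ the characteristic polynomial satisfies $\chi_{\mA_S(n)}(q) = q^{\,?} \cdot |\RR^n \setminus \bigcup H \pmod q|$, i.e. $\chi_{\mA_S(n)}(q)$ counts points $(x_1,\dots,x_n)\in \mathbb{F}_q^n$ avoiding all the congruences $x_i - x_j \equiv s$. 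By Zaslavsky's theorem, $r_S(n) = (-1)^n \chi_{\mA_S(n)}(-1)$, so it suffices to compute $\chi_{\mA_S(n)}(q)$ as a polynomial in $q$ and then evaluate at $q=-1$ up to sign. First I would set up this finite-field count carefully, fixing a large prime $q$ and counting the number $N_S(q)$ of vectors $x\in\mathbb{F}_q^n$ such that $x_i - x_j \neq s$ in $\mathbb{F}_q$ for all $1\le i<j\le n$ and all $s\in S$.

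The heart of the argument is a bijective encoding of such vectors $x\in\mathbb{F}_q^n$ by pairs: a boxed tree $(T,B)\in \mU_S(n)$ together with a choice of "heights" or values consistent with the tree structure, where each cadet sequence in $B$ contributes a product-of-linear-factors worth of freedom. Concretely, I would build an inclusion-exclusion: the raw count of all $x\in\mathbb{F}_q^n$ is $q^n$, and we subtract configurations where some forbidden congruence $x_i - x_j = s$ holds. The key combinatorial device is that the cadet-sequence structure records exactly which chains of coincidences are being imposed. A cadet sequence $(v_1,\dots,v_k)$ of length $k$ should correspond to a group of $k$ coordinates whose pairwise differences are forced to lie in the forbidden set $S\cup\{0\}$ in a prescribed linearly-ordered way, and the $S$-cadet condition (the inequalities $v_i < v_j$ or $v_i > v_j$ governed by whether $\pm\sum \ls(v_p)\in S$) encodes the sign/orientation data that makes the count consistent and the labelings valid. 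The sign $(-1)^{n-|B|}$ is precisely the Möbius/inclusion-exclusion sign: a box of size $k$ contributes a factor $(-1)^{k-1}$ (one fewer free coordinate), and summing over all boxes gives $\prod_{\text{boxes}} (-1)^{|\text{box}|-1} = (-1)^{n-|B|}$.

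The main obstacle — and where most of the work lies — is proving that the finite-field count factors as a sum over $\mU_S(n)$ with the claimed sign and a $q$-polynomial weight per boxed tree that evaluates correctly at $q=-1$. I would proceed by a transfer-matrix or recursive decomposition on the tree: read the tree in postfix (or some canonical traversal) order, and track how each new node either starts a fresh box or extends an existing cadet sequence via the $\cadet$ relation. At each node, the number of admissible $\mathbb{F}_q$-values, given the already-placed coordinates and the forbidden differences $S$, should be a linear polynomial in $q$ of the form $q - (\text{number of forbidden slots})$; the bookkeeping of forbidden slots is exactly what the quantities $\ls(v_p)$ and the membership tests $\sum \ls \in S$ control. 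The delicate point is ensuring that the labeling constraints (comparisons of node labels) correspond bijectively to the ordering of the field values without over- or under-counting — this is where the $S$-cadet inequalities must be shown to be both necessary and sufficient. Once the per-tree weight $W_{(T,B)}(q)$ is identified and shown to satisfy $\sum_{(T,B)\in\mU_S(n)} W_{(T,B)}(q) = \chi_{\mA_S(n)}(q)$, evaluating at $q=-1$ and applying Zaslavsky collapses each weight to $\pm 1$, yielding \eqref{eq:signed-count}.

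A cleaner alternative, which I would pursue in parallel, is to prove \eqref{eq:signed-count} directly by deletion–restriction or by a sign-reversing involution on a larger set of decorated trees, bypassing the finite field entirely. In that approach I would define an involution on the set of all boxed trees (not just the $S$-boxed ones, or on boxed trees decorated with an extra marked pair violating an $S$-constraint) that cancels terms in pairs with opposite sign $(-1)^{n-|B|}$, so that the only surviving fixed points are genuine regions of $\mA_S(n)$. The obstacle there is exhibiting a canonical violating pair to toggle; I expect the finite-field route to be more robust, with the involution left as the mechanism underlying the later unsigned results (Theorem~\ref{thm:unsigned-count}) once transitivity is imposed.
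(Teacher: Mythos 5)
Your starting point is legitimate: the paper itself notes (in the remark following Lemma~\ref{lem:2-multi}) that the reduction of $\chi_{\mA_S(n)}(q)$ to a lattice-point count could be obtained by Athanasiadis's finite-field method, and Zaslavsky's theorem is indeed how one passes from $\chi$ to $r_S(n)$. The gap is that the step you yourself flag as ``where most of the work lies'' --- showing that the point count decomposes as a signed sum over $S$-boxed trees --- is precisely the content of the theorem, and the mechanism you sketch for it would not go through. Your claim that at each node ``the number of admissible $\mathbb{F}_q$-values \dots\ should be a linear polynomial in $q$ of the form $q-(\text{number of forbidden slots})$'' is false: the values forbidden for a coordinate $x_i$ by two different constraints $x_i-x_j\neq s$ and $x_i-x_k\neq t$ coincide whenever $x_j+s=x_k+t$, so the number of admissible values depends on the configuration already placed, not just on the tree shape, and the count is not a product of linear factors. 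This is exactly why the paper does not attempt a node-by-node count but instead works with the full inclusion--exclusion over graphs of imposed coincidences (Lemma~\ref{lem:1-multi}, i.e.\ the coboundary polynomial $\sum_G(y-1)^{\ee(G)}q^{\comp(G)}|\mW_S(G)|$), applies the exponential formula to reduce to connected graphs (Lemma~\ref{lem:2-multi}), decomposes integer tuples in $[\de]^n$ into ``runs'' encoded by configurations, and then proves the identity $Z_{S,n}(\de,y)=[t^n]\,U_S(y,t)^{-\de-m-2}U_S^\bu(y,t)$ by polynomiality in $\de$ and evaluation at negative integers, where the binomial coefficients are reinterpreted via the cycle lemma as counting tuples of plane trees (Lemma~\ref{lem:3-multi} and Claim~\ref{claim:tau}). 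The plane-tree structure of the boxed trees emerges from this negative evaluation, not from a bijective traversal of $\mathbb{F}_q^n$; there is no direct bijection between field points and pairs (boxed tree, heights) of the kind you posit, because $q$ enters only as an exponent of the tree generating function, so individual boxed trees of size $n$ do not carry well-defined $q$-polynomial weights summing to $\chi_{\mA_S(n)}(q)$.

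Your fallback --- a sign-reversing involution proving \eqref{eq:signed-count} directly --- also cannot work as described: the involution in the paper (in the proof of Theorem~\ref{thm:unsigned-count}) only converts the signed sum over $\mU_S(n)$ into an unsigned count of $\mT_S(n)$ under a transitivity hypothesis, and it presupposes Theorem~\ref{thm:signed-count}; there is no larger set of decorated trees naturally indexing regions against which to cancel. So the proposal correctly locates the entry point but is missing the entire combinatorial engine (runs, the exponential formula, and the polynomiality-in-$\de$/cycle-lemma argument) that produces the boxed trees and the sign $(-1)^{n-|B|}$.
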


The proof of Theorem~\ref{thm:signed-count} is delayed to Section~\ref{sec:proof}. We will now give a simpler expression for $r_S(n)$ in the cases where the set $S$ is ``well-behaved''. 
More precisely, we now introduce the notion of \emph{transitivity} for a set $S$, which implies a drastic simplification of the definition of $S$-cadet-sequences (Lemma \ref{lem:S-config-when-transitive}), and allows one to define a simple sign reversing involution on $S$-boxed trees.

\begin{definition}\label{def:transitive}
A set $S$ of integers is called \emph{transitive} if it satisfies the following conditions for all integers $s,t\notin S$: 
\begin{compactitem}
\item if $st>0$, then $s+t\notin S$,
\item if $s>0$ and $t\leq 0$, then $s-t\notin S$ and $t-s\notin S$.
\end{compactitem}
\end{definition}

\begin{example} 
\begin{compactitem}
\item All the subsets of $\{-1,0,1\}$ are transitive. 
\item All the intervals of integers containing 1 are transitive. 
\item Sets of the form $S=I\setminus k\ZZ$, where $I$ is an interval containing $1$ are transitive. 
\item Sets $S$ such that $[-\lfloor m/2\rfloor..\lfloor m/2\rfloor ]\subseteq S \subseteq [-m..m]$ for some $m$ are transitive. 
\item A set $S$ such that $\{-s,~s\in S\}=S$ is transitive if and only if the set of positive integers not in $S$ is closed under addition (equivalently, 0 together with the positive integer not in $S$ form what is called a \emph{numerical semigroup}; see \cite{Assi-Garcia-Sanchez:transitive-sets} for references on numerical semigroups). 
\item A set $S$ such that $[m]\subseteq S\subseteq [-m..m]$ is transitive if and only if the set of negative integers not in $S$ is closed under addition.
\end{compactitem}

\end{example}

\begin{definition}\label{def:mTS}
We denote by $\mT_S(n)$ the set of trees $T$ in $\mT^{(m)}(n)$ such that all nodes $u,v$ satisfying $\cadet(u)=v$ further satisfies the following:
\begin{compactitem} 
\item[Condition($S$):] if $\ls(v)\notin S\cup\{0\}$ then $u<v$, and if $-\ls(v)\notin S$ then $u>v$. 
\end{compactitem}
\end{definition}

\begin{thm}\label{thm:unsigned-count}
If $S$ is transitive, then regions of the hyperplane arrangement $\mA_S(n)$ are equinumerous to the trees in $\mT_S(n)$.
\end{thm}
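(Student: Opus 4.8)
The plan is to start from the signed formula of Theorem~\ref{thm:signed-count} and show that, when $S$ is transitive, the signed sum over $\mU_S(n)$ collapses to the plain enumeration of $\mT_S(n)$. The organizing observation is that in any tree $T\in\mT^{(m)}(n)$ the map $\cadet$ equips the nodes with a \emph{cadet-successor} structure in which every node has at most one cadet-child and is the cadet of at most one node (its parent); hence the nodes split into disjoint maximal cadet chains, and a boxing $B$ of $T$ is exactly a refinement of each maximal chain into contiguous $S$-cadet segments. Since each box lies inside a single maximal chain, the weight $(-1)^{n-|B|}$ factorizes and
\[
r_S(n)=\sum_{T\in\mT^{(m)}(n)}\prod_{C}f(C),\qquad f(C)=\sum_{\text{boxings of }C}(-1)^{|C|-(\#\text{boxes})},
\]
the product ranging over the maximal cadet chains $C$ of $T$. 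So it suffices to evaluate $f(C)$ for a single chain $C=(w_1,\dots,w_\ell)$.

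First I would prove the key structural consequence of transitivity: for transitive $S$, a cadet sequence $(v_1,\dots,v_k)$ is an $S$-cadet sequence if and only if each of its \emph{consecutive} pairs $(v_i,v_{i+1})$ already satisfies the two $S$-cadet inequalities. The forward direction is immediate; the converse is a three-element statement, namely that if $(a,b)$ and $(b,c)$ satisfy the pairwise $S$-cadet conditions with nonnegative gaps $\alpha$ and $\beta$, then so does $(a,c)$ with gap $\alpha+\beta$; an induction on $j-i$ then propagates the conclusion to all pairs $(v_i,v_j)$. Here all gaps $\sum_{p=i+1}^{j}\ls(v_p)$ are nonnegative, which is exactly what lets the clauses of Definition~\ref{def:transitive} apply.

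With this reduction the evaluation of $f(C)$ becomes local. Call a consecutive edge $\cadet(w_i)=w_{i+1}$ of $C$ \emph{box-ok} if the pair $(w_i,w_{i+1})$ satisfies the $S$-cadet inequalities. By the equivalence above, a boxing of $C$ is valid precisely when every \emph{merged} gap is box-ok, so the valid boxings correspond to arbitrary subsets $D$ of the set $G$ of box-ok gaps, the number of boxes being $\ell$ minus $|D|$. Hence $f(C)=\sum_{D\subseteq G}(-1)^{|D|}=(1-1)^{|G|}$, which equals $1$ when $G=\emptyset$ and $0$ otherwise. A direct case check on whether $\ls(w_{i+1})$ and $-\ls(w_{i+1})$ lie in $S$ shows that an edge fails to be box-ok exactly when it satisfies Condition$(S)$ of Definition~\ref{def:mTS}. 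Therefore $f(C)=1$ if Condition$(S)$ holds at every edge of $C$, and $f(C)=0$ otherwise; summing $\prod_C f(C)$ over $T$ then leaves precisely the trees of $\mT_S(n)$, each counted once, which is the claim.

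The main obstacle is the three-element lemma, i.e.\ verifying that the pairwise $S$-cadet conditions compose. The delicate cases are the \emph{mixed} ones where (for the increasing conclusion, say) $\alpha+\beta\in S$ while exactly one of $\alpha,\beta$ lies in $S$, so that the chain $a<b<c$ is not immediately available. This is exactly where I expect both clauses of Definition~\ref{def:transitive} to be needed: the same-sign clause reduces $\alpha+\beta\in S$ to the case $\alpha\in S$ or $\beta\in S$, while the mixed clause, applied to the gap that is out of $S$ together with a negated gap that consistency of the remaining pair forces out of $S$, rules the surviving mixed possibilities out by contradiction. Carrying out this case analysis cleanly, together with its mirror image for the decreasing conclusion using the negative-gap conditions, is the only place transitivity enters and is the technical heart of the proof.
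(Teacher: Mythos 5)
Your proposal is correct and follows essentially the same route as the paper: both hinge on the same key lemma (that for transitive $S$ the consecutive-pair conditions already imply the full $S$-cadet conditions, proved by the same induction and case analysis on the relative order of the three labels), and both then reduce the signed sum over boxings to $|\mT_S(n)|$. Your evaluation of $f(C)$ via $\sum_{D\subseteq G}(-1)^{|D|}=0^{|G|}$ over the maximal cadet chains is just a repackaging of the paper's sign-reversing involution that merges or splits the box at the smallest edge violating Condition($S$), so the two arguments are mathematically identical.
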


\begin{example}
\begin{itemize} 
\item $\mT_{[-m..m]}(n)=\mT^{(m)}(n)$.
\item $\mT_{[-m+1..m]}(n)$ is the set of trees in $\mT^{(m)}(n)$, such that any non-root node having no right-sibling (not even leaves) is less than its parent.
\item $\mT_{[m]}(n)$ is the set of trees in $\mT^{(m)}(n)$, such that such that any cadet-node $v$ is less than its parent.
\item More generally, for $0\leq \ell\leq m$, $\mT_{[-\ell..m]}(n)$ is the set of trees in $\mT^{(m)}(n)$, such that any cadet-node $v$ having more than $\ell$ left-sibling is less than its parent. And $\mT_{[-\ell..m]\setminus \{0\}}(n)$ is the set of trees in $\mT^{(m)}(n)$, such that any cadet-node $v$ having either no left sibling or more than $\ell$ left-siblings is less than its parent.
\end{itemize}
\end{example}

\begin{remark}\label{rk:intersection-mTS}
For any sets $S,S'\subset \ZZ$, $\mT_S(n)\cap\mT_{S'}(n)=\mT_{S\cap S'}(n)$. For instance the set $\mT_{\{1\}}(n)$ of trees associated to Linial arrangement, is the intersection of the set of trees $\mT_{\{0,1\}}(n)$ associated to the Shi arrangement, and the set of trees $\mT_{\{-1,1\}}(n)$ associated to the semiorder arrangement. 

Moreover, each element $s\in[-m..m]\setminus S$ gives a simple condition for trees in $\mT_S(n)$: for $s>0$ the condition is that a cadet node with $s$ left siblings is greater than its parent, while for $s\leq 0$ the condition is that a cadet node with $-s$ left siblings is less than its parent. This is represented in Figure~\ref{fig:condition-s}.
\end{remark}

\fig{width=.6\linewidth}{condition-s}{Conditions for trees to be in $\mT_S(n)$. Each element $s\in[-m..m]\setminus S$ imposes one condition.}

Theorem~\ref{thm:unsigned-count} is an easy consequence of Theorem~\ref{thm:signed-count} and the following lemma.

\begin{lemma}\label{lem:S-config-when-transitive}
Suppose that the set $S$ is transitive. In this case, a cadet sequence $(v_1,\ldots,v_k)$ is an $S$-cadet sequence if and only if 
for all $i\in [k-1]$, 
\begin{compactitem}
\item[(*)] if $\ds \ls(v_{i+1})\in S\cup\{0\}$ then $v_i<v_{i+1}$, and if $\ds -\ls(v_{i+1})\in S$ then $v_i>v_{i+1}$.
\end{compactitem}
\end{lemma}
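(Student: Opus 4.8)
The forward implication is immediate: if $(v_1,\dots,v_k)$ is an $S$-cadet sequence, specializing its defining condition to $j=i+1$, where $\sum_{p=i+1}^{j}\ls(v_p)=\ls(v_{i+1})$, returns exactly condition~(*). So the whole content lies in the converse, and the plan is to reduce it to a purely arithmetic statement about the integers $\ls(v_p)$ and then settle that statement by induction using transitivity. Assume~(*) holds. First I record that each $\ls(v_p)$ (for $2\le p\le k$) is \emph{admissible}, meaning one cannot have both $\ls(v_p)\in S\cup\{0\}$ and $-\ls(v_p)\in S$; otherwise~(*) would demand $v_{p-1}<v_p$ and $v_{p-1}>v_p$ at once. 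Now fix $i<j$, put $\sigma=\sum_{p=i+1}^{j}\ls(v_p)$, and abbreviate $a_p=\ls(v_p)$. I claim it suffices to prove the following: for non-negative, admissible integers $a_{i+1},\dots,a_j$ with sum $\sigma$, (A) if $\sigma\in S\cup\{0\}$ then $a_p\in S\cup\{0\}$ for all $p$, and (B) if $-\sigma\in S$ then $-a_p\in S$ for all $p$. Indeed, granting this, in case $\sigma\in S\cup\{0\}$ part~(A) gives $\ls(v_p)\in S\cup\{0\}$ for every $p$, so~(*) forces $v_{p-1}<v_p$ throughout and hence $v_i<v_{i+1}<\dots<v_j$; in case $-\sigma\in S$ part~(B) forces $v_{p-1}>v_p$ throughout and hence $v_i>\dots>v_j$. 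Either way the defining condition of an $S$-cadet sequence holds for the pair $(i,j)$.

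To establish (A) and (B) I would induct simultaneously on the number $r$ of summands; write (T1) and (T2) for the two conditions of Definition~\ref{def:transitive} and set $G=\ZZ\setminus S$. The base case $r=1$ is trivial. For (A) with $r\ge 2$, assuming $\sigma\in S$ (the case $\sigma=0$ forces every $a_p=0$), suppose for contradiction that some $a_q\notin S\cup\{0\}$, so $a_q>0$ and $a_q\in G$, and set $\rho=\sigma-a_q$. Since $\rho=0$ would give $\sigma=a_q\notin S$, we have $\rho>0$; and (T1), applied to $G\cap\ZZ_{>0}$, forces $\rho\in S$ (else $a_q,\rho$ are two positive elements of $G$ summing to $\sigma\in S$). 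The decisive step is then (T2): with $s=a_q\in G\cap\ZZ_{>0}$ and $t=-\rho$, once we know $-\rho\in G$, condition (T2) yields $s-t=\sigma\notin S$, contradicting $\sigma\in S$. Part (B) is the mirror image: peel a summand $a_q$ with $-a_q\notin S$, locate $\rho$ with $-\rho\in S$ via (T1) on $G\cap\ZZ_{<0}$, and apply (T2) with $s=\rho$ and $t=-a_q$ to get $t-s=-\sigma\notin S$; here the degenerate case $a_q=0$ causes no trouble, since it merely makes $t=0\in G$.

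The subtle point, and where I expect the real work to be, is the hypothesis ``$-\rho\in G$'' needed to fire (T2), and this is exactly why the induction must carry (A) and (B) together. Suppose instead $-\rho\in S$ (while $\rho\in S$). The $r-1$ summands defining $\rho$ form a nonempty admissible family, so the induction hypothesis applies to them: its part (A), because $\rho\in S$, makes every such $a_p\in S\cup\{0\}$, while its part (B), because $-\rho\in S$, makes every such $-a_p\in S$. Thus each of these summands would be inadmissible, a contradiction; hence $-\rho\notin S$, i.e. $-\rho\in G$, and (T2) applies as claimed. The symmetric cross-application of the induction hypothesis gives $\rho\in G$ in part (B). With (A) and (B) in hand, the chaining described in the first paragraph finishes the lemma.
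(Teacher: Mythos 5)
Your proof is correct, but it takes a genuinely different route from the paper's. The paper argues by induction on the length $k$ of the cadet sequence: assuming the prefix $(v_1,\ldots,v_{k-1})$ is already an $S$-cadet sequence, it verifies the pair conditions $(i,k)$ by a four-way case analysis on the relative order of $v_i$, $v_{k-1}$, $v_k$, each time combining a statement of the form $\pm\sum_{p=i+1}^{k-1}\ls(v_p)\notin S$ with $\pm\ls(v_k)\notin S$ via transitivity to conclude $\pm\sum_{p=i+1}^{k}\ls(v_p)\notin S$. You instead extract a standalone arithmetic lemma about transitive sets --- that if a sum of non-negative \emph{admissible} integers lies in $S\cup\{0\}$ (resp.\ has its negative in $S$) then every summand does --- proved by a mutual induction on the number of summands, and then deduce the lemma by chaining the consecutive comparisons $v_{p-1}<v_p$ (resp.\ $v_{p-1}>v_p$). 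The delicate point in your argument, correctly identified and handled, is establishing $-\rho\notin S$ (resp.\ $\rho\notin S\cup\{0\}$, which also yields $\rho>0$) before firing the second transitivity axiom; the cross-application of the two induction hypotheses to contradict admissibility is exactly right. The paper's proof is shorter and works entirely with negated memberships propagated forward; yours decouples the arithmetic from the tree and makes explicit the stronger structural fact that whenever a pair condition is non-vacuous the whole segment $v_i,\ldots,v_j$ is monotone in the labels, which is perhaps more illuminating about \emph{why} the consecutive conditions suffice. Both are valid; neither is a trivial rewording of the other.
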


\begin{proof}
It is clear that the condition (*) is necessary. We now prove that it is sufficient, by induction on $k$. The case $k=1$ is trivial. Now suppose that $k>1$ and $\ga=(v_1,\ldots,v_k)$ satisfies (*). Since $\ga'=(v_1,\ldots,v_{k-1})$ satisfies (*), it is an $S$-cadet sequence. Hence we only need to check that for all $i\in [k-1]$, 
\begin{compactitem}
\item[\hspace{-15mm} (**)] if $\ds \sum_{p=i+1}^{k}\ls(v_p)\!\in\! S\!\cup\!\{0\}$ then $v_i\!<\!v_k$, and if $\ds -\!\sum_{p=i+1}^{k}\!\ls(v_p)\!\in\! S$ then $v_i\!>\!v_k$. 
\end{compactitem}
The case $i=k-1$ of (**) is directly given by (*). We now consider $i\in[k-2]$, and consider several cases. Suppose first that $v_i<v_{k-1}<v_k$. In this case, $-\sum_{p=i+1}^{k-1}\ls(v_p)\notin S$ (since $\ga'$ is an $S$-cadet sequence), $-\ls(v_k)\notin S$ (since $\ga$ satisfies (*)), hence $-\sum_{p=i+1}^{k}\ls(v_p)\notin S$ (since $S$ is transitive), hence Condition (**) holds for $i$. The case $v_i>v_{k-1}>v_k$ is treated similarly.
Suppose next that $v_i>v_{k-1}<v_k$. In this case, $\sum_{p=i+1}^{k-1}\ls(v_p)\notin S\cup \{0\}$ (since $\ga'$ is an $S$-cadet sequence), $-\ls(k)\notin S$ (since $\ga$ satisfies (*)), hence $\sum_{p=i+1}^{k}\ls(v_p)\notin S\cup \{0\}$ and $-\sum_{p=i+1}^{k}\ls(v_p)\notin S$ (since $S$ is transitive), hence Condition (**) holds for $i$. The case $v_i<v_{k-1}>v_k$ is treated similarly. Thus Condition (**) holds for all $i$, and $\ga$ is an $S$-cadet sequence.
\end{proof}

\begin{proof}[Proof of Theorem~\ref{thm:unsigned-count}]
Let $T\in \mT^{(m)}(n)$, and let $v=\cadet(u)$. We claim that that $u$ and $v$ can be in the same box of an $S$-boxed tree $(T,B)$ if and only if $u$ and $v$ do not satisfy Condition($S$) of Definition~\ref{def:mTS}. 
Indeed, by Lemma~\ref{lem:S-config-when-transitive}, in the case $u<v$ (resp. $u>v$) the vertices $u$ and $v$ can be in the same box if and only if $-\ls(v)\notin S$ (resp. $\ls(v)\notin S\cup\{0\}$), and this holds if and only if Condition($S$) does not hold. 

For a tree $T$ in $\mT^{(m)}(n)$, we denote $\mB_{T}=\{B~|~(T,B)\in\mU_S(n)\}$. By Theorem~\ref{thm:signed-count}, 
\begin{equation}\label{eq:two-sums}
r_S(n)=\sum_{T\in \mT_S(n)} ~\sum_{B\in \mB_{T}}(-1)^{n-|B|} ~+~\sum_{T\in \mT^{(m)}(n)\setminus \mT_S(n)}~ \sum_{B\in \mB_{T}}(-1)^{n-|B|}.
\end{equation}
By the above claim, for all $T$ in $\mT_S$, $\mB_{T}$ contain a single element because every node of $T$ must be in a different box. Thus the first sum of~\eqref{eq:two-sums} contributes $|\mT_S(n)|$. We now prove that the second sum is 0 using a sign reversing involution. 
For a tree $T\in \mT^{(m)}(n)\setminus\mT_S(n)$, we pick the smallest vertex $v=\cadet(u)$ such that Condition($S$) does not hold, and define an involution $\varphi$ on $\mB_{T}$ as follows: 
\begin{compactitem}
\item if $u$ and $v$ are in the same box of $B$, then $\varphi(B)$ is obtained by splitting the box containing them between $u$ and $v$, 
\item if $u$ and $v$ are in different boxes of $B$, then $\varphi(B)$ is obtained by merging these boxes.
\end{compactitem}
Lemma~\ref{lem:S-config-when-transitive} ensures that $\varphi(B)\in\mB_T$ in the second situation. Since $\varphi$ is an involution on $\mB_{T}$ changing the number of boxes by $\pm 1$, we get $ \sum_{B\in \mB_{T}}(-1)^{n-|B|}\!=\!0$.
Hence the second sum in~\eqref{eq:two-sums} contributes 0.
\end{proof}


\medskip

\section{General deformations of the braid arrangement}\label{sec:multi-shaken}
In this section we extend the results of Section~\ref{sec:regions-shaken} to general deformations of the braid arrangement. We fix a positive integer $N$ and an ${N\choose 2}$-tuple of finite sets of integers $\SS=(S_{a,b})_{1\leq a<b\leq N}$. 
The arrangement in $\RR^N$ made of the hyperplanes 
$$H_{a,b,s}=\{(x_1,\ldots,x_N)\in \RR^N ~|~ x_a-x_b=s\},$$ 
for all $1\leq a<b\leq N$ and all $s\in S_{a,b}$ is called \emph{$\SS$-braid arrangement}, and is denoted $\mA_{\SS}$. Note that if $S_{a,b}=S$ for all $a,b$, then $\mA_\SS=\mA_S(N)$.

We will now extend Theorem~\ref{thm:signed-count} to $\SS$-braid arrangements. 
Let $m=\max(|s|,~s\in \cup S_{a,b})$. For $1\leq a<b\leq N$, we denote $S_{b,a}=S_{a,b}$, $S_{a,b}^-=\{s\geq 0~|~-s\in S_{a,b}\}$, and $S_{b,a}^-=\{s>0~|~s\in S_{a,b}\}\cup \{0\}$.

\begin{definition} 
A cadet sequence $(v_1,\ldots,v_{k})$ of $T\in\mT^{(m)}(N)$ is an \emph{$\SS$-cadet sequence} if for all $1\leq i<j\leq k$, $\ds \sum_{p=i+1}^{j}\ls(v_p)\notin S_{v_i,v_j}^-$.
An \emph{$\SS$-boxed tree} is a boxed tree $(T,B)$ with $T\in\mT^{(m)}(N)$, and $B$ containing only $\SS$-cadet sequences. We denote by $\mU_\SS$ the set of $\SS$-boxed trees.
\end{definition}

\begin{thm}\label{thm:signed-count-multi}
The number of regions of the hyperplane arrangement $\mA_\SS$ is
\begin{equation}\label{eq:signed-count-multi}
r_\SS=\sum_{(T,B)\in \mU_\SS}(-1)^{n-|B|}.
\end{equation}
\end{thm}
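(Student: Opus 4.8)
The plan is to deduce Theorem~\ref{thm:signed-count-multi} by the same finite-field / characteristic-polynomial machinery that must underlie Theorem~\ref{thm:signed-count}, which the $S$-braid case is a special instance of (set $S_{a,b}=S$ for all $a,b$). First I would set up the finite-field method of Athanasiadis: reduce every defining integer $s$ modulo a large prime $q$, count the points of $\mathbb{F}_q^N$ lying off all the hyperplanes $H_{a,b,s}$, and recall that this count is a polynomial in $q$ equal (up to the usual sign conventions) to the characteristic polynomial $\chi_{\mA_\SS}(q)$, so that by Zaslavsky's theorem $r_\SS = (-1)^N \chi_{\mA_\SS}(-1)$. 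A point $x=(x_1,\ldots,x_N)\in\mathbb{F}_q^N$ avoiding the arrangement is precisely a function $[N]\to\mathbb{F}_q$ such that for each pair $a<b$ the difference $x_a-x_b$ avoids the forbidden residues in $S_{a,b}$.

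Next I would encode such colorings combinatorially. The key is a bijective or sieve-type correspondence between $\mathbb{F}_q$-valued configurations avoiding the forbidden differences and the $(m+1)$-ary trees $T\in\mT^{(m)}(N)$ together with a choice of ``offsets'' that records the gaps $\ls(v)$ between consecutive cadet-nodes along each cadet sequence. The role of the asymmetric sets $S_{a,b}^-=\{s\ge 0\mid -s\in S_{a,b}\}$ and $S_{b,a}^-=\{s>0\mid s\in S_{a,b}\}\cup\{0\}$ is exactly to orient each forbidden difference: if $v_i<v_j$ the relevant constraint is $\sum_{p=i+1}^j\ls(v_p)\notin S_{v_i,v_j}^-$, and if $v_i>v_j$ it is the symmetric statement, and these two cases are unified precisely by the convention defining $S^-$. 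I expect the cleanest route is an inclusion–exclusion over which forbidden differences are violated, grouping violated constraints into the cadet-sequence boxes; each box of size $\ell$ contributes a factor that, after summing the geometric/arithmetic series in $q$, leaves a net sign $(-1)^{\ell-1}$, which assembles into the global sign $(-1)^{N-|B|}$ once one multiplies over all boxes $B$.

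The organizing identity I would aim for is an expansion of the form
\begin{equation*}
\chi_{\mA_\SS}(q)=\sum_{(T,B)\in\mU_\SS}(-1)^{N-|B|}\,q^{|B|},
\end{equation*}
where the exponent $|B|$ counts the boxes (equivalently the connected ``free'' coordinates after the inclusion–exclusion collapse). Granting this, evaluating at $q=-1$ and applying $r_\SS=(-1)^N\chi_{\mA_\SS}(-1)$ gives $r_\SS=\sum_{(T,B)}(-1)^{N}(-1)^{N-|B|}(-1)^{|B|}=\sum_{(T,B)}(-1)^{N-|B|}$ after collecting signs, which is exactly~\eqref{eq:signed-count-multi}. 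I would set this up so that the $S$-braid version (Theorem~\ref{thm:signed-count}) is the literal specialization $S_{a,b}\equiv S$, so that the $\SS$-generalization requires no new idea beyond allowing the forbidden-difference sets to depend on the pair $(a,b)$; the only place where care is needed is verifying that the sieve respects the pairwise (rather than uniform) structure, i.e.\ that the transitivity-free signed count still telescopes correctly when different pairs forbid different residues.

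The main obstacle will be establishing the tree encoding and the exact matching of signs, i.e.\ proving that the characteristic polynomial expands over $\SS$-boxed trees with weight $(-1)^{N-|B|}q^{|B|}$. Concretely, the hard part is constructing the bijection between $\mathbb{F}_q$-colorings (or their inclusion–exclusion refinement) and pairs $(T,B)$ with a compatible assignment of residues, and checking that the forbidden-difference conditions translate faithfully into the defining inequalities of $\SS$-cadet sequences $\sum_{p=i+1}^{j}\ls(v_p)\notin S_{v_i,v_j}^-$. Since Theorem~\ref{thm:signed-count} is asserted earlier and proved in Section~\ref{sec:proof}, I would in practice factor the argument so that the genuinely new content here is only the bookkeeping extending the uniform sets to the tuple $\SS=(S_{a,b})$; everything else should be a transcription of the proof of Theorem~\ref{thm:signed-count} with $S$ replaced by the pair-dependent data.
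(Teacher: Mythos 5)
Your proposal hinges on the ``organizing identity'' $\chi_{\mA_\SS}(q)=\sum_{(T,B)\in\mU_\SS}(-1)^{N-|B|}q^{|B|}$, and this identity is false. Take the braid arrangement $\mA_{\{0\}}(2)$: the only $\{0\}$-boxed trees are the two $1$-ary trees with both nodes in singleton boxes, so your right-hand side is $2q^2$, whereas $\chi(q)=q(q-1)$. Or take the Linial arrangement $\mA_{\{1\}}(2)$: there are four trees with singleton boxes and two with a single box, so your right-hand side is $4q^2-2q$, whereas $\chi(q)=q^2-q$. Moreover, even granting the identity, your evaluation at $q=-1$ is miscomputed: $(-1)^N(-1)^{N-|B|}(-1)^{|B|}=(-1)^{2N}=1$, so you would obtain $r_\SS=|\mU_\SS|$, the \emph{unsigned} count of boxed trees, which is wrong (it gives $6$ instead of $2$ for $\mA_{\{1\}}(2)$). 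The true relationship between $\chi$ and boxed trees is exponential rather than polynomial: the paper proves $\chi_\hSS(q,\tt)=R_\hSS(-\tt)^{-q}$, where $R_\hSS$ is the exponential generating function of the signed boxed-tree counts, and the signed sum over $\mU_\SS$ emerges only because at $q=-1$ the $(-q)$-th power collapses to the first power. No identity of the shape you propose, with $q$-degree bounded by $N$ and boxed-tree coefficients, can hold.

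The paper's actual route is also structurally different from what you sketch, and the part you leave as ``the main obstacle'' is precisely where all the work lies. The proof has three steps: (1) a Whitney-type expansion of the coboundary polynomial over central subarrangements, reorganized as a sum over graphs weighted by counts of integer tuples (Lemma~\ref{lem:1-multi}); (2) an exponential-formula argument showing $\frac1q\log P_\hSS(q,y,\tt)=\lim_{\de\to\infty}\frac1\de\log Z_\hSS(\de,y,\tt)$, where $Z_\hSS$ counts tuples in $[\de]^{|\nn|}$ by energy --- this is the step that your finite-field setup could indeed replace, as the paper itself remarks; and (3) the key combinatorial step (Lemma~\ref{lem:3-multi}), which decomposes a tuple into \emph{runs}, encodes each run as an $(m,N)$-configuration, observes that the resulting count is a polynomial in $\de$, extrapolates to negative $\de$, and uses the cycle lemma to identify the answer with $U_\hSS(y,\tt)^{-\de-m-2}U_\hSS^\bu(y,\tt)$. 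Your sketch has no substitute for step (3): the ``inclusion--exclusion over violated constraints grouped into cadet-sequence boxes'' is not only unproved but is aimed at a false target, so the gap is not merely bookkeeping. (The reduction of the $\SS$-case to the same argument as the $S$-case is, as you say, routine; but that is the easy part.)
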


The condition $ \sum_{p=i+1}^{j}\ls(v_p)\notin S_{v_i,v_j}^-$ is equivalent to: if $ \sum_{p=i+1}^{j}\ls(v_p)\in S_{v_i,v_j}\cup \{0\}$ then $v_i<v_j$, and if $ -\! \sum_{p=i+1}^{j}\ls(v_p)\in S_{v_i,v_j}$ then $v_i>v_j$. In particular, if $S_{a,b}=S$ for all $a,b$, then $\mU_\SS=\mU_S(N)$. Hence Theorem~\ref{thm:signed-count-multi} generalizes Theorem~\ref{thm:signed-count}.

Theorem~\ref{thm:signed-count-multi} will be extended in the next section and its proof is delayed to Section~\ref{sec:proof}.
We now generalize Theorem~\ref{thm:unsigned-count} to $\SS$-braid arrangements.

\begin{definition}\label{def:transitive-multi}
The tuple $\SS$ is said \emph{transitive} if for all distinct integers $a,b,c\in[N]$ the following condition holds: if $s\notin S_{a,b}^-$ and $t\notin S_{b,c}^-$, then $s+t\notin S_{a,c}^-$.
\end{definition}

It is easy to see that if $S_{a,b}=S$ for all $a,b\in [N]$, then $\SS$ is transitive if and only if $S$ is transitive. 

\begin{example} \label{exp:transitive-multi}
If for all $a,b\in [N]$, $[-\lfloor m/2 \rfloor \,..\,\lfloor m/2\rfloor] \subseteq S_{a,b}\subseteq [-m..m]$, then $\SS$ is transitive.
\end{example}

\begin{definition}\label{def:mTSS}
We denote by $\mT_\SS$ the set of trees $T$ in $\mT^{(m)}(N)$ such that any pair of nodes $u,v$ such that $\cadet(u)=v$ satisfies $\ls(v)\in S_{u,v}^-$.
\end{definition}

\begin{thm}\label{thm:unsigned-count-multi}
If $\SS=(S_{a,b})_{1\leq a<b\leq N}$ is transitive, then the regions of $\mA_\SS$ are equinumerous to the trees in $\mT_\SS$.
\end{thm}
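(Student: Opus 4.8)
The plan is to mirror the proof of Theorem~\ref{thm:unsigned-count} line for line, replacing each single-set ingredient by its $\SS$-analogue. The engine is the signed formula of Theorem~\ref{thm:signed-count-multi} together with a sign-reversing involution that merges or splits boxes; the only ingredient that genuinely needs to be re-established in the multivariate setting is a \emph{local} characterization of $\SS$-cadet sequences, playing the role of Lemma~\ref{lem:S-config-when-transitive}.

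First I would prove the following multivariate analogue of Lemma~\ref{lem:S-config-when-transitive}: if $\SS$ is transitive, then a cadet sequence $(v_1,\ldots,v_k)$ is a $\SS$-cadet sequence if and only if $\ls(v_{i+1})\notin S_{v_i,v_{i+1}}^-$ for every $i\in[k-1]$. Necessity is the case $j=i+1$ of the definition. For sufficiency I would induct on $k$, the case $k=1$ being trivial. Assuming the local conditions hold, the prefix $(v_1,\ldots,v_{k-1})$ satisfies them and is a $\SS$-cadet sequence by induction, so it remains to check $\sum_{p=i+1}^{k}\ls(v_p)\notin S_{v_i,v_k}^-$ for every $i\in[k-1]$. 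The case $i=k-1$ is a hypothesis; for $i\leq k-2$ I would set $a=v_i$, $b=v_{k-1}$, $c=v_k$ (distinct nodes, hence distinct labels in $[N]$), $s=\sum_{p=i+1}^{k-1}\ls(v_p)\notin S_{a,b}^-$ and $t=\ls(v_k)\notin S_{b,c}^-$, and conclude $s+t\notin S_{a,c}^-$ directly from Definition~\ref{def:transitive-multi}. The point is that, unlike in the proof of Lemma~\ref{lem:S-config-when-transitive}, the notation $S_{u,v}^-$ already encodes the sign of $v_i-v_k$, so no case analysis on the relative order of $v_i,v_{k-1},v_k$ is needed: transitivity is applied once, as a black box.

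With this lemma in hand the rest follows exactly as for Theorem~\ref{thm:unsigned-count}. Specialising to $k=2$ gives the key claim: for a node $u$ and $v=\cadet(u)$, the pair can occupy a single box if and only if $\ls(v)\notin S_{u,v}^-$, which is precisely the negation of the defining condition of $\mT_\SS$ in Definition~\ref{def:mTSS}. Splitting the sum in Theorem~\ref{thm:signed-count-multi} over trees $T\in\mT_\SS$ and $T\notin\mT_\SS$, each $T\in\mT_\SS$ forces every node to lie in its own box, so it admits a unique $\SS$-boxing and contributes $+1$; thus the first sum equals $|\mT_\SS|$. For $T\notin\mT_\SS$ I would pick the smallest $v=\cadet(u)$ violating the condition of Definition~\ref{def:mTSS} and define the involution $\varphi$ on the boxings of $T$ that merges the boxes of $u$ and $v$ when they are separated and splits them when they coincide. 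Because $\cadet(u)=v$ identifies $u$ as the unique node whose cadet is $v$, the node $u$ is the last entry of its box and $v$ the first entry of its box whenever they are separated, so the merge is combinatorially well-defined; and by the lemma above the merged sequence is again a $\SS$-cadet sequence, since its only new consecutive junction is $(u,v)$ and $\ls(v)\notin S_{u,v}^-$ by the choice of $v$. Hence $\varphi$ is a sign-reversing involution on the boxings of $T$, the second sum vanishes, and $r_\SS=|\mT_\SS|$.

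The main obstacle is the local characterization lemma: all the content of transitivity is consumed there, in reducing the global condition over all pairs $i<j$ to consecutive pairs. Everything after that is the bookkeeping of the involution argument, the one point requiring care being that merging two boxes yields a valid $\SS$-cadet sequence, which is exactly what the lemma guarantees.
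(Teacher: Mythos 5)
Your proposal is correct and follows essentially the same route as the paper: the paper also reduces Theorem~\ref{thm:unsigned-count-multi} to Theorem~\ref{thm:signed-count-multi} via the same local characterization of $\SS$-cadet sequences (proved by the same induction on $k$, with transitivity of $\SS$ applied once through the $S_{u,v}^-$ notation), followed by the identical merge/split sign-reversing involution. Your explicit justification that $u$ must be last in its box and $v$ first in its box is a detail the paper leaves implicit, but there is no substantive difference.
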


\begin{remark}
The condition $\ls(v)\notin S_{u,v}^-$ is equivalent to: if $\ls(v)\notin S_{u,v}\cup \{0\}$ then $u<v$, and if $\ds -\ls(v)\in S_{u,v}$ then $u>v$. In particular, if $S_{a,b}=S$ for all $a,b\in[N]$, then $\mT_\SS=\mT_S(N)$. Hence Theorem~\ref{thm:unsigned-count-multi} generalizes Theorem~\ref{thm:unsigned-count}.
\end{remark}

\begin{example}
Let $G=([N],E)$ be a graph and let $S$, $S'$ be two finite sets of integers. Let $G(S,S')$ be the tuple $\SS=(S_{a,b})_{1\leq a<b\leq N}$ defined by $S_{a,b}=S$ if $\{a,b\}\in E$ and $S_{a,b}=S'$ otherwise. Several cases are represented in Figure~\ref{fig:trans-graphical-arrangements}.
\begin{enumerate}
\item For $S=\{-1,0,1\}$ and $S'=\{0,1\}$, the tuple $\SS=G(S,S')$ is transitive for any graph $G$, and $\mT_\SS$ is the set of trees in $\mT^{(1)}(N)$ such that if a node $v$ is the right child of $u$, then either $\{u,v\}\in E$ or $u>v$ (or both). 
\item For $S=\{-1,0,1\}$ and $S'=\{0\}$, the tuple $\SS=G(S,S')$ is transitive for any graph $G$, and $\mT_\SS$ is the set of trees in $\mT^{(1)}(N)$ such that if a node $v$ is the right child of $u$, then $\{u,v\}\in E$. 
\item For $S=\{0,1\}$ and $S'=\{0\}$, the tuple $\SS=G(S,S')$ is transitive for any graph $G$, and $\mT_\SS$ is the set of trees in $\mT^{(1)}(N)$ such that if a node $v$ is the right child of $u$, then $\{u,v\}\in E$ and $u>v$. 
\item For $S=\{0,1\}$ and $S'=\{-1,0\}$, the tuple $\SS=G(S,S')$ is transitive for any graph $G$, and $\mT_\SS$ is the set of trees in $\mT^{(1)}(N)$ such that if a node $v$ is the right child of $u$, then either ($\{u,v\}\in E$ and $u>v$) or ($\{u,v\}\notin E$ and $u<v$). 
\end{enumerate}
\end{example}

\fig{width=\linewidth}{trans-graphical-arrangements}{Some transitive deformations of the braid arrangement. These arrangements have the form $A_{G(S,S')}$, where $G$ is the graph having vertex set $[3]$ and edges $\{1,2\}$ and $\{1,3\}$.}

\begin{proof}[Proof of Theorem~\ref{thm:unsigned-count-multi}]
Given Theorem~\ref{thm:signed-count-multi}, we only need to prove that if $\SS=(S_{a,b})_{1\leq a<b\leq n}$ is transitive, then 
\begin{equation}\label{eq:signed-to-unsigned}
\sum_{(T,B)\in \mU_\SS}(-1)^{|B|}=|\mT_\SS|.
\end{equation}
The proof of \eqref{eq:signed-to-unsigned} is almost identical to that of Theorem~\ref{thm:unsigned-count}, except that Lemma~\ref{lem:S-config-when-transitive} is replaced by the following claim: \emph{if $\SS$ is transitive, a cadet sequence $(v_1,\ldots,v_k)$ of $T\in\mT^{(m)}(N)$ is a $\SS$-cadet sequence if and only if for all $i\in[k-1]$, $\ls(v_{i+1})\notin S_{v_{i},v_{i+1}}^-$.}\\
\ni \textbf{Proof of the claim:} It is clear that $\ls(v_{i+1})\notin S_{v_{i},v_{i+1}}^-$ is necessary. We now prove that it is sufficient, by induction on $k$. The case $k=1$ is trivial. Now suppose that $k>1$, and $\ga=(v_1,\ldots,v_k)$ is cadet sequence such that for all $i\in[k-1]$, $\ls(v_{i+1})\notin S_{v_{i},v_{i+1}}^-$. We want to prove that $\ga$ is a $\SS$-cadet sequence. By the induction hypothesis, $\ga'=(v_1,\ldots,v_{k-1})$ is a $\SS$-cadet sequence, so we only need to prove that for all $i\in[k-1]$, $ \sum_{p=i+1}^{k}\ls(v_p)\notin S_{v_{i},v_{k}}^-$. This is true by hypothesis for $i=k-1$. Moreover, for $i\in[k-1]$, $\sum_{p=i+1}^{k-1}\ls(v_p)\notin S_{i,k-1}^-$ (since $\ga'$ is a $\SS$-cadet sequence), and $\ls(v_k)\notin S_{k-1,k}^-$ (by hypothesis), hence $\sum_{p=i+1}^{k}\ls(v_p)\notin S_{i,k}^-$ (since $\SS$ is transitive). Hence $\ga$ is a $\SS$-cadet sequence. This proves the claim.\\
One can then define a sign reversing involution on $\SS$-boxed trees showing \eqref{eq:signed-to-unsigned}, exactly as in the proof of Theorem~\ref{thm:unsigned-count}.
\end{proof}


\medskip

\section{Characteristic and coboundary polynomials of the deformations of the braid arrangement}\label{sec:char-poly}
In this section we refine our preceding counting results by expressing the characteristic and coboundary polynomials of deformed braid arrangements in terms of boxed trees.

\subsection{Characteristic and coboundary polynomials.}~\\
For a hyperplane arrangement $\mA\subset \RR^n$, we denote by $\chi_\mA(q)$ its \emph{characteristic polynomial} of $\mA$, and by $P_\mA(q,y)$ its \emph{coboundary polynomial}. 
Recall from \cite{Crapo:coboundary-poly} that the coboundary polynomial is defined by 
$$P_\mA(q,y)=\sum_{\mB\subseteq \mA,~\cap_{H\in\mB}H\neq\emptyset}q^{\dim(\cap_{H\in\mB}H)}(y-1)^{|\mB|},$$
and that $\chi_\mA(q)=P_\mA(q,0)$.

The characteristic polynomial $\chi_\mA$ contains a lot of information about the arrangement~$\mA$. In particular, by a result of Zaslavsky~\cite{Zaslavsky:region-hyperplanes}, the number of regions $r_\mA$ and the number of \emph{relatively bounded}\footnote{A region of $\mA$ is \emph{relatively bounded} if its intersection with the subspace generated by the vectors normal to the hyperplanes of $\mA$ is bounded.} regions $b_\mA$ are evaluations of $\chi_\mA(q)$: 
\begin{eqnarray}
r_\mA&=&(-1)^n\chi_\mA(-1),\label{eq:face-characteristic}\\
b_\mA&=&(-1)^{\rank(\mA)}\chi_\mA(1),\nonumber
\end{eqnarray}
where $\rank(\mA)$ is the dimension of the vector space generated by the vectors normal to the hyperplanes of $\mA$.
The characteristic polynomial is also equivalent to the Poincar\'e polynomial of the cohomology ring of the complexification of $\mA$; see~\cite{Orlik:Poincare-hyperplanes}. 
The coboundary polynomial is equivalent to the Tutte polynomial $T_{\mA}(x,y)$ of $\mA$ (that is, the Tutte polynomial of the semi-matroid associated with $\mA$, in the sense of Ardila~\cite{Ardila:Tutte-hyperplanes,Ardila:Semimatroids}): 
$$T_\mA(x,y)=(y-1)^{-\rank(\mA)}P_\mA((x-1)(y-1),y).$$

\medskip
\subsection{Expressing the coboundary polynomial in terms of boxed trees.}~\\
In order to express the coboundary polynomials of deformed braid arrangements in terms of boxed trees, we will consider arrangements of all dimensions.
Let $\hSS=(S_{a,b})_{1\leq a\leq b\leq N}$ be an ${N+1 \choose 2}$-tuple of finite sets, let $m=\max(|s|,~s\in \cup S_{a,b})$, and let $\nn=(n_1,\ldots,n_N)\in\NN^N$. 
We denote $|\nn|=n_1+\ldots+n_N$, and 
$$V(\nn)=\{(a,i)~|~a\in [N],i\in[n_a]\}.$$ 
We endow $V(\nn)$ with the \emph{lexicographical order}, that is, we denote $(a,i)<(b,j)$ if either $a<b$, or $a=b$ and $i<j$.
For $u=(a,i)$ and $v=(b,j)\in V(\nn)$, we denote $S_{u,v}=S_{v,u}=S_{a,b}$, and if $u<v$ we denote $S_{u,v}^-=\{s\geq 0~|~-s\in S_{a,b}\}$ and $S_{v,u}^-=\{s> 0~|~s\in S_{a,b}\}\cup\{0\}$.
Lastly, we define $\mA_\hSS(\nn)$ as the arrangement in $\RR^{|\nn|}$ with hyperplanes 
$$H_{u,v,s}=\{(x_{w})_{w\in V(\nn)}~|~x_{u}-x_{v}=s\},$$
for all $u<v$ in $V(\nn)$ and all $s\in S_{u,v}$.

Note that $\mA_\hSS(\nn)$ identifies with the arrangement $\mA_{\hSS(\nn)}$, where 
\begin{equation}\label{eq:SSnn}
\hSS(\nn)=(S'_{u,v})_{1\leq u<v \leq |\nn|}
\end{equation}
with $S_{u,v}'=S_{a,b}$ for all $\ds u\in \left[1+\sum_{i=1}^{a-1}n_i \,..\, \sum_{i=1}^{a}n_i\right]$ and $\ds v\in \left[1+\sum_{i=1}^{b-1}n_i \,..\, \sum_{i=1}^{b}n_i\right]$.
For instance, $\mA_\hSS(1,1,\ldots,1)= \mA_\hSS$, and $\mA_\hSS(n_1,0,\ldots,0)=\mA_{S_{1,1}}(n_1)$. 
We now describe boxed trees related to the arrangement $\mA_\hSS(\nn)$.

\begin{compactitem}
\item We denote by $\mT^{(m)}(\nn)$ the set of rooted plane $(m+1)$-ary trees with $|\nn|$ nodes labeled with distinct labels in $V(\nn)$. 
\item A cadet sequence $(v_1,\ldots,v_{k})$ of $T\in\mT^{(m)}(\nn)$ is \emph{admissible} if $v_i<v_{i+1}$ for all $i\in[k-1]$ such that $\ls(v_{i+1})=0$. A boxed tree $(T,B)$ is \emph{admissible} if all the sequences in $B$ are admissible. We denote by $\mU^{(m)}(\nn)$ the set of admissible boxed trees $(T,B)$ with $T\in \mT^{(m)}(\nn)$.
\item The \emph{$\hSS$-energy} of a cadet sequence $(v_1,\ldots,v_k)$ of $T$ is the number of pairs $\{i,j\}$ with $1\leq i<j\leq k$, such that $\sum_{p=i+1}^{j}\ls(v_p)\in S_{v_i,v_j}^-$.\footnote{The terminology \emph{energy} used here is related to the interpretation of the current counting problem (about the coboundary polynomial of arrangements) in terms of a gas model which will be introduced in Section \ref{sec:proof}.}
The \emph{$\hSS$-energy} of a boxed tree $(T,B)\in\mU^{(m)}(\nn)$, denoted $\energy_\hSS(T,B)$, is the sum of the energies of the cadet sequences in $B$.
\item We denote by $\mU_\hSS(\nn)$ the set of boxed trees $(T,B)\in \mU^{(m)}(\nn)$ such that $\energy_\hSS(T,B)=0$.
\end{compactitem}

Note that any boxed tree in $\mU_\hSS(\nn)$ is admissible. Moreover, $\mU_\hSS(n_1,0,\ldots,0)=\mU_{S_{1,1}}(n_1)$, and $\mU_\hSS(1,1,\ldots,1)=\mU_\SS$, where $\SS=(S_{a,b})_{1\leq a<b\leq N}$.

We denote by $\mT_\hSS(\nn)$ the set of trees $T$ in $\mT^{(m)}(\nn)$ such that any pair of nodes $u,v$ such that $\cadet(u)=v$ satisfies $\ls(v)\in S_{u,v}^-$. Note that $\mT_\hSS(n_1,0,\ldots,0)=\mT_{S_{1,1}}(n_1)$, and $\mT_\hSS(1,1,\ldots,1)=\mT_\SS$, where $\SS=(S_{a,b})_{1\leq a<b\leq N}$. We say that $\hSS$ is \emph{multi-transitive} if $\hSS(\nn)$ is transitive (in the sense of Definition~\ref{def:transitive-multi}) for all $\nn\in\NN^N$. Note that for $N=1$ $\hSS$ is multi-transitive if and only if $S_{1,1}$ is transitive.

\begin{example} If for all $a,b\in [N]$, $[-\lfloor m/2 \rfloor \,..\,\lfloor m/2\rfloor] \subseteq S_{a,b}\subseteq [-m..m]$, then $\hSS$ is multi-transitive. Also, if $S_{a,a}$ is transitive for all $a\in[N]$, and $S_{a,b}=[-m..m]$ for all $a<b$, then $\hSS$ is multi-transitive. 
\end{example}

We can now express the coboundary polynomial of the arrangements. 
Given indeterminates $t_1,\ldots,t_N$, we denote $\tt=(t_1,\ldots,t_N)$, $\tt^\nn=\prod_{a=1}^N t_a^{n_a}$, and $\nn!=\prod_{a=1}^N n_a!$. 
We denote 
\begin{eqnarray}
P_\hSS(q,y,\tt)&=&\sum_{\nn\in\NN^N}P_{\mA_\hSS(\nn)}(q,y)\frac{\tt^\nn}{\nn!},\label{eq:def-gfP}\\
\chi_\hSS(q,\tt)&=&\sum_{\nn\in\NN^N}\chi_{\mA_\hSS(\nn)}(q)\frac{\tt^\nn}{\nn!},\label{eq:def-gfchi}\\
R_\hSS(\tt)&=&\sum_{\nn\in\NN^N}r_{\mA_\hSS(\nn)}\frac{\tt^\nn}{\nn!},\label{eq:def-gfR}
\end{eqnarray}
In the above definition, we adopt the convention $P_{\mA_\hSS(0,\ldots,0)}(q,y)=1$ (coboundary polynomial of the empty semi-matroid). By~\eqref{eq:face-characteristic}, $\chi_\hSS(q,\tt)=P_\hSS(q,0,\tt)$ and $R_\hSS(\tt)=\chi_\hSS(-1,-\tt)$, where $-\tt=(-t_1,\ldots,-t_N)$.

\begin{thm}\label{thm:energy-count-multi}
Let $\hSS=(S_{a,b})_{1\leq a\leq b\leq N}$ be an ${N+1\choose 2}$-tuple of finite sets of integers, and let $m=\max(|s|,~s\in \cup S_{a,b})$.
Then $P_\hSS(q,y,\tt)$ is related to boxed trees by 
\begin{equation}\label{eq:energy-count-multi}
P_\hSS(q,y,\tt)=\left(\sum_{\nn\in\NN^N}\frac{\tt^\nn}{\nn!}\sum_{(T,B)\in \mU^{(m)}(\nn)}(-1)^{|B|}y^{\energy_\hSS(T,B)}\right)^{-q}.
\end{equation}
In particular, 
\begin{equation}\label{eq:q-is-exponent}
\ds \chi_\hSS(q,\tt)=R_\hSS(-\tt)^{-q},
\end{equation}
and 
\begin{equation}\label{eq:GF-regions-multi}
R_\hSS(\tt)=\sum_{\nn\in\NN^N}\frac{\tt^\nn}{\nn!}\sum_{(T,B)\in \mU_\hSS(\nn)}(-1)^{|\nn|-|B|}.
\end{equation}
Moreover, if $\hSS$ is multi-transitive, then
\begin{equation}\label{eq:GF-regions-transitive-multi}
R_\hSS(\tt)=\sum_{\nn\in\NN^N}\frac{\tt^\nn}{\nn!}|\mT_\hSS(\nn)|.
\end{equation}
\end{thm}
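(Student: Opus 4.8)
The plan is to prove the master identity \eqref{eq:energy-count-multi} first; the remaining three identities then follow from it by formal manipulation.

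\emph{Reduction to a weighted point count (finite field method).} First I would invoke the finite field interpretation of the coboundary polynomial. Since $\mA_\hSS(\nn)$ is defined over $\ZZ$ by finitely many hyperplanes $x_u-x_v=s$, for every sufficiently large prime $q$ the arrangement has good reduction modulo $q$: a subset $\mB$ of hyperplanes has nonempty real intersection if and only if the corresponding difference system is solvable over $\ZZ/q\ZZ$, and then $\dim_\RR(\cap\mB)$ equals the $\ZZ/q\ZZ$-dimension (all cycle sums in the constraint graph are bounded by the finitely many values of $s$, so they vanish over $\RR$ exactly when they vanish modulo $q$). Expanding $\prod_{H}(1+(y-1)\one[x\in H])$ and counting $\ZZ/q\ZZ$-points of each affine flat gives
\begin{equation*}
P_{\mA_\hSS(\nn)}(q,y)=\sum_{x\in(\ZZ/q\ZZ)^{V(\nn)}}y^{c(x)},\qquad c(x)=\#\{(u,v,s): u<v,\ s\in S_{u,v},\ x_u-x_v\equiv s\},
\end{equation*}
valid for all large primes $q$. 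Setting $y=0$ recovers $\chi_{\mA_\hSS(\nn)}(q)$ as the number of proper $\ZZ/q\ZZ$-colorings, consistent with \eqref{eq:face-characteristic}.

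\emph{The combinatorial core.} The heart of the proof, and the step I expect to be the main obstacle, is to show that the exponential generating function of these weighted point counts factors as a $q$-th power of a tree series. Concretely, viewing the values in $\ZZ/q\ZZ$ as $q$ cyclically arranged levels, I would set up a weight-preserving encoding of each coloring $x\colon V(\nn)\to\ZZ/q\ZZ$ by a decorated forest built from admissible boxed trees distributed over the $q$ levels, so that: (i) the left-sibling statistics $\ls$ record the coordinate differences $x_u-x_v$ read across the cyclic structure, converting the incidence count $c(x)$ into the total $\hSS$-energy; (ii) the lexicographic tie-breaking among equal coordinates is exactly the admissibility condition defining $\mU^{(m)}(\nn)$; and (iii) the $q$ independent levels produce the exponent $q$. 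This would yield $\sum_{\nn}\frac{\tt^\nn}{\nn!}\sum_{x}y^{c(x)}=G(y,\tt)^{q}$ for an explicit tree series $G$ with constant term $1$, after which the remaining and most delicate task is to identify $G^{-1}$ with the signed boxed-tree series $\sum_\nn\frac{\tt^\nn}{\nn!}\sum_{(T,B)\in\mU^{(m)}(\nn)}(-1)^{|B|}y^{\energy_\hSS(T,B)}$. I would carry out this inversion by a sign-reversing argument on rooted forests, with the sign $(-1)^{|B|}$ and the cadet-sequence boxes arising precisely as the bookkeeping of that inversion.

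\emph{From primes to polynomials, and the remaining identities.} The coefficient of $\tt^\nn$ on the left of \eqref{eq:energy-count-multi} is the polynomial $P_{\mA_\hSS(\nn)}(q,y)$, while on the right, since the base series has constant term $1$, the coefficient of $\tt^\nn$ in $(\cdots)^{-q}$ is polynomial in $q$ (expand $(\cdots)^{-q}=\exp(-q\log(\cdots))$). As these polynomials agree for all large primes $q$ by the previous two steps, they agree identically, proving \eqref{eq:energy-count-multi}. Specializing $y=0$ annihilates every boxed tree of positive energy and leaves exactly $\mU_\hSS(\nn)$, giving $\chi_\hSS(q,\tt)=\big(\sum_\nn\frac{\tt^\nn}{\nn!}\sum_{(T,B)\in\mU_\hSS(\nn)}(-1)^{|B|}\big)^{-q}$; combining this with $R_\hSS(\tt)=\chi_\hSS(-1,-\tt)$ and $r_{\mA}=(-1)^{|\nn|}\chi_{\mA}(-1)$ from \eqref{eq:face-characteristic} yields both \eqref{eq:q-is-exponent} and \eqref{eq:GF-regions-multi} after the substitutions $q=-1$ and $\tt\mapsto-\tt$. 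Finally, \eqref{eq:GF-regions-transitive-multi} follows by applying, for each fixed $\nn$, the sign-reversing involution of Theorem~\ref{thm:unsigned-count-multi} to the arrangement $\mA_{\hSS(\nn)}$: multi-transitivity means $\hSS(\nn)$ is transitive for every $\nn$ (Definition~\ref{def:transitive-multi}), so the involution collapses the signed sum $\sum_{(T,B)\in\mU_\hSS(\nn)}(-1)^{|\nn|-|B|}$ to $|\mT_\hSS(\nn)|$.
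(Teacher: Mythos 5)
Your overall architecture is the right one, and it matches the paper's in outline: reduce the coboundary polynomial to a weighted count of integer points, observe that $q$ enters only as an exponent (so one may pass to the logarithm and work with connected clusters), and identify the resulting cluster series with the reciprocal of the signed boxed-tree series. Your finite-field route to the point-count formula is a legitimate substitute for the paper's Lemmas~\ref{lem:1-multi} and~\ref{lem:2-multi} (the paper itself notes in a remark that this half could be done via the Athanasiadis--Ardila method), and your derivations of \eqref{eq:q-is-exponent}, \eqref{eq:GF-regions-multi} and \eqref{eq:GF-regions-transitive-multi} from \eqref{eq:energy-count-multi} are correct and identical to the paper's.

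The genuine gap is the step you yourself flag as ``the most delicate task'': identifying the point-count generating function with a power of the signed boxed-tree series. You assert that a ``weight-preserving encoding by decorated forests over $q$ levels'' exists and that the inversion producing the signs $(-1)^{|B|}$ can be done by ``a sign-reversing argument on rooted forests,'' but no such encoding or involution is exhibited, and this is exactly where all the content of the theorem lives --- it is where the cadet sequences, the admissibility condition, the widths, and the signs must actually emerge. (Also, the mechanism producing the exponent $q$ is not ``$q$ independent levels'': points within distance $m$ interact across levels; the exponent comes from translation-invariance of connected clusters combined with the exponential formula.) The paper's Lemma~\ref{lem:3-multi} fills this gap by a concrete chain of arguments: decompose each tuple $\xx\in[\de]^{|\nn|}$ into \emph{runs} (maximal groups of coordinates within gaps $\leq m$), observe that each run is encoded by an $(m,N)$-configuration and that the energy is additive over runs, count the placements of $r$ runs by the binomial coefficient $\binom{\de+m+r-\sum\wid(\ga_i)}{r}$, extend the resulting expression polynomially to negative $\de$, and then use a cycle-lemma bijection (Claim~\ref{claim:tau}) between lattice paths and tuples of marked plane trees to recognize the value at $\de=-m-1-\al$ as $[\tt^\nn]U_\hSS^{\,\al-1}U_\hSS^{\bu}$; the boxes and the signs $(-1)^{|B|}$ arise from reassembling the configurations along the plane trees produced by this bijection. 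Without an argument of comparable substance, your proof of \eqref{eq:energy-count-multi} is a plan rather than a proof.
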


Note that Equation \eqref{eq:GF-regions-multi} implies Theorem~\ref{thm:signed-count} (for $S=S_{1,1}$) by extracting the coefficient of $t_1^n t_2^0\ldots t_N^0$, and Theorem~\ref{thm:signed-count-multi} by extracting the coefficient of $t_1t_2\cdots t_N$. Several applications of Theorem \ref{thm:energy-count-multi} are given in Section~\ref{sec:GF}.

\medskip
\subsection{Remarks about the case of graphical arrangements (case $m=0$).}~\\ 
In this subsection we consider the special case $m=0$ of Theorem \ref{thm:energy-count-multi} which corresponds to graphical arrangements, in order to highlight how our results are related to some known results about graph colorings and acyclic orientations.

Remember that the \emph{$G$-graphical arrangement} associated to a (simple, undirected) graph $G=([n],E)$ is the $n$-dimensional arrangement $\mA(G)$ made of the hyperplanes $H_{i,j,0}$ for all $\{i,j\}\in E$. It follows easily from the definitions that the number of regions of $\mA(G)$ is the number of acyclic orientations of~$G$\footnote{In this natural correspondence, the direction of the edge $(i,j)$ indicates which side of the hyperplanes $H_{i,j,0}$ the region is, or equivalently the inequality between the coordinates $x_i$ and $x_j$.}. Also, as we now recall, the characteristic and coboundary polynomials of $\mA(G)$ are related to the colorings of $G$. Recall that the \emph{partition function of the Potts model} on $G$, is the unique bivariate polynomial $P_G(q,y)$ such that for all positive integer $q$,
$$P_{G}(q,y)=\sum_{f:V\to [q]}y^{\textrm{mono}(f)},$$
where the sum is over all possible colorings of the vertices in $q$ colors, and $\textrm{mono}(f)$ is the number of edges of $G$ with both endpoints of the same color. 
The specialization $\chi_{G}(q)=P_G(q,0)$ counting the \emph{proper colorings} of $G$ in $q$ colors is called \emph{chromatic polynomial} of $G$. It is known \cite{Crapo:coboundary-poly} that for any graph $G$, the coboundary polynomial $P_{\mA(G)}(q,y)$ is equal to $P_{G}(q,y)$, and in particular the characteristic polynomial $\chi_{\mA(G)}(q)$ is equal to the chromatic polynomial $\chi_{G}(q)$.

We can now interpret the case $m=0$ of Theorem \ref{thm:energy-count-multi} in terms of graphs. Let $\hSS=(S_{a,b})_{1\leq a\leq b\leq N}$ be a tuple such that each set $S_{a,b}$ is either empty or equal to $\{0\}$. Note that $\mA_{\hSS(\nn)}$ is the $G$-graphical arrangement for the graph $G=G_{\hSS}(\nn)$ with vertex set $V(\nn)$ and edges $\{u,v\}$ for all $u=(a,i)$ and $v=(b,j)$ such that $S_{a,b}=\{0\}$. 
Moreover, the boxed trees in $\mU^{(0)}(\nn)$ are simply paths with boxes partitioning the vertices, such that in each box the vertices are in increasing order. Hence $\mU^{(0)}(\nn)$ can simply be interpreted as the set of \emph{ordered set partitions} of $V(\nn)$. Lastly, the $\hSS$-energy of a subset $U$ of $V(\nn)$ is the number of edges of $G_{\hSS}(\nn)$ \emph{induced} by $U$ (that is, edges of $G_{\hSS}(\nn)$ with both endpoints in $U$). So the right-hand side of \eqref{eq:energy-count-multi} counts ordered partitions of $V(\nn)$ according to the number of edges it induces.

\begin{example}
Consider the case $N=2$, $S_{1,1}=S_{22}=\emptyset$ and $S_{1,2}=\{0\}$. In this case, $G=G_{\hSS}(n_1,n_2)$ is the \emph{complete bipartite graph} $K_{n_1,n_2}$. Hence, for a subset $U$ of $V(n_1,n_2)$ containing $k_1$ vertices of the form $(1,i)$ and $k_2$ vertices of the form $(2,i)$, the number of edges of $G$ induced by $U$ is $k_1k_2$. Thus \eqref{eq:energy-count-multi} gives
\begin{eqnarray*}
\sum_{(n_1,n_2)\in\NN^2} \!P_{K_{n_1,n_2}}(q,y)\frac{t_1^{n_1}t_2^{n_2}}{n_1!n_2!}&\!=\!&\left(\sum_{(n_1,n_2)\in\NN^2}\!\frac{t_1^{n_1}t_2^{n_2}}{n_1!n_2!}~\!\sum_{(U_1,\ldots,U_r)\atop\textrm{ordered partition of }V(n_1,n_2)}~\prod_{i=1}^r y^{\energy_\hSS(U_i)}\right)^{-q}\\
&\!=\!&\left(\frac{1}{1+\sum_{(k_1,k_2)\in\NN^2\setminus \{(0,0)\}} \!\!\frac{y^{k_1k_2}t_1^{k_1}t_2^{k_2}}{k_1!k_2!}}\right)^{\!-q}\\
&\!=\!&\left(\sum_{(k_1,k_2)\in\NN^2} \!\!\frac{y^{k_1k_2}t_1^{k_1}t_2^{k_2}}{k_1!k_2!}\right)^q,
\end{eqnarray*}
where the second equality directly follows basic generating function principles upon interpreting ordered set partitions as labeled sequences of sets (see \cite[Chapter 2]{Flajolet:analytic}).
\end{example}

Now, for an arbitrary graph $G=([N],E)$, we consider $\hSS=(S_{a,b})_{1\leq a\leq b\leq N}$ with $S_{a,a}=\{0\}$ for all $a\in[N]$, and for $a<b$, $S_{a,b}=\{0\}$ if $\{a,b\}\in E$ and $S_{a,b}=\emptyset$ otherwise. Then, \eqref{eq:q-is-exponent} gives 
\begin{equation}\label{eq:heaps}
\chi_G(q)=[t_1t_2\cdots t_N]R_\hSS(-\tt)^{-q}=(-1)^N[t_1t_2\cdots t_N]R_\hSS(\tt)^{-q}.
\end{equation}
This equation relates the proper colorings of $G$ (left-hand side) to the acyclic orientations of induced subgraphs of $G$ (right-hand side). For instance, it gives $(-1)^N\chi_G(-1)=[t_1t_2\cdots t_N]R_\hSS(\tt)$ which is the number of regions of $\mA_{\hSS(1,1,\ldots,1)}=\mA(G)$, or equivalently the number of acyclic orientations of $G$. This is precisely the interpretation of $\chi_G(-1)$ given by Stanley in~\cite{Stanley:acyclic-orientations}. More generally, for $U=\{u_1,\ldots,u_k\}\subseteq [N]$, the coefficient $[t_{u_1}t_{u_2}\cdots t_{u_k}]R_\hSS(\tt)$ is by definition the number of regions of the $G[U]$-graphical arrangement, where $G[U]$ is the subgraph of $G$ induced by $U$. Thus $[t_{u_1}t_{u_2}\cdots t_{u_k}]R_\hSS(\tt)$ is the number of acyclic orientations of $G[U]$. Using this interpretation, one can recover from \eqref{eq:heaps} the interpretation of $(-1)^N\chi(-q)$ given in~\cite{Stanley:acyclic-orientations} (it counts the total number of acyclic orientations induced on the blocks of an ordered set partition of length $q$ of the vertices of $G$).

For the readers with some familiarity with the theory of heaps (see for instance~\cite{Viennot:Heaps,Krattenthaler:Heaps}), we mention an interpretation of \eqref{eq:heaps} in this context. 
Consider the heap structure associated to the graph $G$: the pieces are the vertices of $G$, and two pieces overlap if they correspond to adjacent vertices.
In this context, $R_\hSS(\tt)$ can be interpreted as the generating function of the heaps of pieces associated to $G$ (where the variable $t_i$ counts the number of pieces associated to the vertex $i$ of $G$). Indeed, the regions of $\mA_{\hSS(\nn)}$ are in one-to-one correspondence with the acyclic orientations of $G_{\hSS(\nn)}$, which are in one-to-one correspondence with the heaps having $n_i$ pieces  associated to the vertex $i$ of $G$.
Hence, by~\cite[Proposition 5.3]{Viennot:Heaps}, $I(\tt):=R_\hSS(-\tt)^{-1}$ is the generating function of \emph{trivial heaps}, or equivalently, \emph{independent sets} of $G$ (that is, sets of non-adjacent vertices). Thus, through the theory of heaps \eqref{eq:heaps} becomes transparent: it simply expresses the fact that a proper $q$-coloring of $G$ is a $q$-tuple of independent sets partitioning the vertices.
More generally, we get a simple expression for the generating function of chromatic polynomials $\chi_\hSS(q,\tt)$:
$$\chi_\hSS(q,\tt)=I(\tt)^q=\left(\sum_{U\subseteq [N],~\textrm{independent set of } G}~\prod_{i\in U}t_i\right)^q.$$


\medskip

\section{Generating functions}\label{sec:GF}
In this section, we use Theorem \ref{thm:energy-count-multi} in order to give equations for the generating functions $P_\hSS(q,y,\tt)$, $\chi_\hSS(q,\tt)$, and $R_\hSS(\tt)$. These equations simply translate the decomposition of boxed trees obtained by deleting the box containing the root. 

\subsection{A universal generating function equation.}~\\
Let $\hSS=(S_{a,b})_{1\leq a\leq b\leq N}$ be an ${N+1 \choose 2}$-tuple of finite sets, and let $m=\max(|s|,~s\in \cup S_{a,b})$. In order to characterize the generating functions associated to the arrangements $\mA_\hSS(\nn)$, we need to define some combinatorial structures encoding admissible cadet sequences for trees in $\mT^{(m)}(\nn)$.

\begin{definition}\label{def:S-config-multi}
A \emph{$(m,N)$-configuration} of \emph{size} $\kk\in \NN^N$, is a pair $\ga=((d_1,\ldots,d_{|\kk|-1}),(u_1,\ldots,u_{|\kk|}))$ such that $\{u_1,\ldots,u_{|\kk|}\}=V(\kk)$, $d_1,\ldots,d_{|\kk|-1}\in [0.. m]$, and if $d_i=0$ then $u_i<u_{i+1}$. 

We denote by $|\ga|=\kk$ the size of $\ga$. The \emph{width} of $\ga$ is $\wid(\ga)=d_1+\cdots+d_{|\kk|-1}+m+1$, and the \emph{$\hSS$-energy} of $\ga$, denoted $\energy_\hSS(\ga)$, is the number of pairs $\{i,j\}$ with $1\leq i<j \leq |\kk|$ such that $ \sum_{p=i}^{j-1}d_p\in S_{u_i,u_j}^-$. 
\end{definition}

\begin{remark}\label{rk:cadet-seq=config}
To an admissible cadet sequence $(v_1,\ldots,v_k)$ of a tree $T\in\mT^{(m)}(\nn)$, we associate an $(m,N)$-configuration $\ga=((d_1,\ldots,d_{k-1}),(u_1,\ldots,u_{k}))$ defined as follows. Denoting $\kk=(k_1,\ldots,k_N)$ with $k_a=|\{i\in[n_a]~|~(a,i)\in \{v_1,\ldots,v_k\}\}|$, we set 
\begin{compactitem}
\item $d_i=\ls(v_{i+1})$ for all $i\in[k-1]$, 
\item $(u_1,\ldots,u_k)$ is the unique order-preserving relabeling of $(v_1,\ldots,v_k)$ in $V(\kk)$.
\end{compactitem}
It is clear that $\ga$ is an $(m,N)$-configuration of size $\kk$, and that $\energy_\hSS(\ga)$ is the $\hSS$-energy of the cadet sequence $(v_1,\ldots,v_k)$. Moreover, $\wid(\ga)$ is the number of children of $v_1,\ldots,v_k$ which are neither in $\{v_2,\ldots,v_k\}$ nor right-siblings of $v_2,\ldots,v_k$.
\end{remark}

For $\kk\in \NN^N$, we denote by $\mC^{(m)}(\kk)$ the set of $(m,N)$-configurations of size $\kk$, and we denote by $\mC_\hSS(\kk)$ the subset of configurations having $\hSS$-energy 0.  We also denote $\ds \mC^{(m,N)}=\bigcup_{\kk\neq (0,\ldots,0)}\mC^{(m)}(\kk)$ and $\ds \mC_\hSS=\bigcup_{\kk\neq (0,\ldots,0)}\mC_\hSS(\kk)$, where the unions are over non-zero tuples in $\NN^N$. Finally, we denote 
$$\Gamma_\hSS(x,y,\tt)=\sum_{\ga \in\mC^{(m,N)}}x^{\wid(\ga)}y^{\energy_S(\ga)}\frac{\tt^{|\ga|}}{|\ga|!},$$
and 
$$\Gamma_\hSS(x,\tt)=\Gamma_\hSS(x,0,\tt)=\sum_{\ga \in\mC_\hSS}x^{\wid(\ga)}\frac{\tt^{|\ga|}}{|\ga|!}.$$
We now state the general form of the generating function equation.

\begin{thm} \label{thm:GF-multi}
The generating function of coboundary polynomials $P_\hSS(q,y,\tt)$ (defined by \eqref{eq:def-gfP}) is equal to $\tP_\hSS(y,\tt)^{-q}$, where $\tP_\hSS(y,\tt)$ is the unique series in $\QQ[y][[t_1\ldots,t_N]]$ satisfying 
\begin{equation}\label{eq:GF-P-multi}
\tP_\hSS(y,\tt)=1-\Gamma_\hSS(\tP_\hSS(y,\tt),y,\tt).
\end{equation}
In particular, the generating function of regions $\ds R_\hSS(\tt)$ (defined by \eqref{eq:def-gfR}) is the unique series in $\QQ[[t_1,\ldots,t_N]]$ satisfying 
\begin{equation}\label{eq:GF-R-multi}
R_\hSS(\tt)=1-\Gamma_\hSS(R_\hSS(\tt),-\tt).
\end{equation}
\end{thm}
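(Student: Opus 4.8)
The plan is to prove Theorem~\ref{thm:GF-multi} by translating the recursive decomposition of admissible boxed trees into a functional equation, following the standard symbolic-method philosophy for labeled plane trees. The central object is the generating function $\tP_\hSS(y,\tt)$, which I claim equals
\begin{equation*}
\tP_\hSS(y,\tt)=\sum_{\nn\in\NN^N}\frac{\tt^\nn}{\nn!}\sum_{(T,B)\in\mU^{(m)}(\nn)}(-1)^{|B|}y^{\energy_\hSS(T,B)},
\end{equation*}
so that by~\eqref{eq:energy-count-multi} we have $P_\hSS(q,y,\tt)=\tP_\hSS(y,\tt)^{-q}$, and $R_\hSS(\tt)=\tP_\hSS(0,-\tt)$ follows by specializing $y=0$ and replacing $\tt$ by $-\tt$ (using $(-1)^{|B|}\tt^\nn\mapsto(-1)^{|B|}(-1)^{|\nn|}\tt^\nn=(-1)^{|\nn|-|B|}\tt^\nn$ together with the fact that $\mU_\hSS(\nn)$ is exactly the energy-zero subfamily). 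The whole task therefore reduces to showing that this series $\tP_\hSS$ satisfies~\eqref{eq:GF-P-multi}; Equation~\eqref{eq:GF-R-multi} is then the $y=0$, $\tt\mapsto-\tt$ specialization.

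The first step is to set up the decomposition. Given a nonempty admissible boxed tree $(T,B)\in\mU^{(m)}(\nn)$, I would delete the box $\beta$ containing the root, which is a cadet sequence $(v_1,\ldots,v_k)$ with $v_1$ the root. By Remark~\ref{rk:cadet-seq=config} this box carries the data of an $(m,N)$-configuration $\ga$ of some size $\kk$, recording the left-sibling counts $d_i=\ls(v_{i+1})$ and the order-type $(u_1,\ldots,u_k)$ of the labels. Removing $\beta$ dismantles $T$ into an ordered forest of subtrees hanging off the $\wid(\ga)$ "free" child-slots of the nodes $v_1,\ldots,v_k$ — precisely the children that are neither in the box nor right-siblings of box members, as computed in Remark~\ref{rk:cadet-seq=config}. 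Each such subtree is itself an admissible boxed tree (the root box of a hanging subtree is any admissible cadet sequence, with no constraint linking it to $\beta$), and the $\hSS$-energy is additive over boxes, so the energy of $(T,B)$ is $\energy_\hSS(\ga)$ plus the sum of the energies of the subtrees. The sign $(-1)^{|B|}$ likewise splits as $(-1)^{1}$ for $\beta$ times the product of the subtree signs.

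The second step is to verify that this decomposition is an exponential-generating-function bijection respecting the statistics, which yields $\tP_\hSS=1-\Gamma_\hSS(\tP_\hSS,y,\tt)$. The constant term $1$ accounts for the empty tree ($\nn=\mathbf{0}$). For a nonempty tree, summing over the choice of root box contributes the $x$-variable of $\Gamma_\hSS$ substituted by $\tP_\hSS$: each of the $\wid(\ga)$ free slots is filled, independently, by an arbitrary admissible boxed tree, producing a factor $\tP_\hSS^{\wid(\ga)}$; the configuration $\ga$ contributes $y^{\energy_\hSS(\ga)}$; and the overall $-1$ from the root box accounts for the minus sign in~\eqref{eq:GF-P-multi}. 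The only genuinely delicate bookkeeping is the exponential weighting. Because nodes are labeled by $V(\nn)$ and the labels of the root box are recorded only up to order-type in $\ga$, I must check that the multinomial coefficients match: distributing the label set of size $|\nn|$ among the box of size $|\kk|$ and the $\wid(\ga)$ ordered subtrees, and relabeling each piece order-preservingly, is encoded exactly by the product $\tfrac{\tt^{|\ga|}}{|\ga|!}\cdot\prod(\text{subtree series})$ inside $\Gamma_\hSS$. This is the main obstacle, and it is the familiar subtlety of multivariate labeled enumeration: one must confirm that summing over interleavings of the labels reproduces the product structure of $\Gamma_\hSS$ composed with $\tP_\hSS$, and in particular that the ordered (plane) nature of the subtrees means they are distinguished, so no extra symmetry factor intrudes.

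Once the functional equation~\eqref{eq:GF-P-multi} is established, uniqueness follows by a standard coefficient-extraction argument: $\Gamma_\hSS$ has no constant term and each configuration of size $\kk\neq\mathbf{0}$ contributes at order $\tt^{\kk}$, so~\eqref{eq:GF-P-multi} determines the coefficient of $\tt^\nn$ in $\tP_\hSS$ recursively in terms of strictly-lower-order coefficients, giving a unique solution in $\QQ[y][[t_1,\ldots,t_N]]$. Finally, specializing $y=0$ kills every configuration of positive energy, leaving only $\mC_\hSS$ and hence $\Gamma_\hSS(x,\tt)$, and substituting $\tt\mapsto-\tt$ together with $R_\hSS(\tt)=\tP_\hSS(0,-\tt)$ yields~\eqref{eq:GF-R-multi}, completing the proof.
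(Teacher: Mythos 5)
Your proposal is correct and follows essentially the same route as the paper: both start from the boxed-tree expression for $\tP_\hSS$ provided by Theorem~\ref{thm:energy-count-multi}, decompose a nonempty admissible boxed tree at the box containing the root (yielding an $(m,N)$-configuration $\ga$ via Remark~\ref{rk:cadet-seq=config} and $\wid(\ga)$ hanging admissible boxed subtrees), translate this into the functional equation via the labeled product and $\ell$-sequence constructions, and then specialize $y=0$, $\tt\mapsto-\tt$ to obtain \eqref{eq:GF-R-multi}. Your extra remarks on the multinomial label bookkeeping and on uniqueness by coefficient extraction are correct and only make explicit what the paper leaves implicit.
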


\begin{example}
Let $N=2$ and $S_{1,1}=[-2..2]$, $S_{1,2}=[-1..2]$, and $S_{2,2}=\{-2,0,1,2\}$. 
Then we have $m=2$ and $\mC_S=\{\ga_1,\ga_2,\ga_3,\ga_4,\ga_5\}$, where $\ga_1=((),(v_1))$, $\ga_2=((),(v_2))$ $\ga_3=((1),(v_2,v_3))$, $\ga_4=((2),(v_1,v_2))$, and $\ga_5=((2,1),(v_1,v_2,v_3))$ with $v_1=(1,1)$, $v_2=(2,1)$, $v_3=(2,2)$.
Thus $\Ga_\hSS(x,\tt)=(t_1+t_2)x^3+t_2^2x^4/2+t_1t_2x^5+t_1t_2^2x^6/2$ and 
$$R_\hSS(\tt)=1+(t_1+t_2)R_\hSS(\tt)^3-t_2^2R_\hSS(\tt)^4/2-t_1t_2R_\hSS(\tt)^5+t_1t_2^2R_\hSS(\tt)^6/2.$$
This gives 
$$R_\hSS(\tt)=1+t_{{1}}+t_{{2}}
+3\,{t_{{1}}}^{2}+5\,t_{{1}}t_{{2}}+5/2\,{t_{{2}}}^{2}
+12\,{t_{{1}}}^{3}+28\,{t_{{1}}}^{2}t_{{2}}+25\,t_{{1}}{t_{{2}}}^{2}+17/2\,{t_{{2}}}^{3}
+\ldots\,.
$$
\end{example}

\begin{proof}
By Theorem~\ref{thm:energy-count-multi}, $P_\hSS(q,y,\tt)=\tP_\hSS(y,\tt)^{-q}$ for 
$$\tP_\hSS(y,\tt):=\sum_{\nn\in\NN^N}\frac{\tt^\nn}{\nn!}\sum_{(T,B)\in \mU^{(m)}(\nn)}(-1)^{|B|}y^{\energy_\hSS(T,B)}.$$
We now consider the decomposition boxed trees $(T,B)\in\mU^{(m)}(\nn)$ for $|\nn|>0$. Consider the cadet sequence $\be=(v_1,\ldots,v_k)\in B$ containing the root $v_1$ of $T$. By Remark~\ref{rk:cadet-seq=config}, we can associate to $\be$ an $(m,N)$-configuration $\ga$. Deleting the vertices $v_1,\ldots,v_k$ of $T$ and the right-siblings of $v_2,\ldots,v_{k-1}$ (which, by definition, are leaves), gives a sequence of $\wid(\ga)$ subtrees. Hence, the class $\mU^{(m,N)}=\bigcup_{\nn\in\NN^N} \mU^{(m)}(\nn)$ admits the following recursive equation 
$$\mU^{(m,N)}=1+\sum_{\ga\in \mC^{(m,N)}}\{\ga\}\star \Seq_{\wid(\ga)}(\mU^{(m,N)}),$$
where $\star$ denotes the \emph{product}, and $\Seq_\ell$ denotes the \emph{$\ell$-sequences} construction for \emph{labeled combinatorial classes} (see e.g.~\cite[Chapter 2]{Flajolet:analytic}). This gives 
$$\tP_\hSS(y,\tt)=1+\sum_{\ga\in \mC^{(m,N)}}\left(-y^{\energy_\hSS(\ga)}\frac{\tt^{|\ga|}}{|\ga|!}\right)\times \left(\tP_\hSS(y,\tt)\right)^{\wid(\ga)},$$
which is \eqref{eq:GF-P-multi}. Moreover, \eqref{eq:GF-P-multi} implies \eqref{eq:GF-R-multi}, because \eqref{eq:face-characteristic} gives $R_\hSS(\tt)=\tP_\hSS(0,-\tt)$.
\end{proof}

\medskip
\subsection{Generating functions for $S$-braid arrangements (case N=1).}~\\
In this subsection, we explore in more details the case of $S$-braid arrangements (case $N=1$ of Theorem~\ref{thm:GF-multi}). When $S$ is transitive we obtain simpler equations for the generating function of regions. We then recover and extend several classical results.

For a set of integers $S$, we denote $\mC_S=\mC_{(S)}$. Hence, $\mC_S$ is the set of pairs $\ga=((d_1,\ldots,d_{k-1}),(v_1,\ldots,v_{k}))$ such that 
\begin{compactitem} 
\item $\{v_1,\ldots,v_{k}\}=[k]$, 
\item $d_1,\ldots,d_{k-1}\in [0..m]$, where $m=\max(|s|,~s\in S)$
\item for all $0\leq i<j\leq k$ either $v_i<v_j$ and $-\sum_{p=i}^{j-1}d_p\notin S$, or $v_i>v_j$ and $\sum_{p=i}^{j-1}d_p\notin S\cup \{0\}$.
\end{compactitem}
Equation \eqref{eq:GF-R-multi} then gives the following characterization of $\ds R_S(t)=\sum_{n\geq 0}r_{\mA_S(n)}\frac{t^n}{n!}$:
\begin{equation}\label{eq:GF-R}
R_S(t)=1-\Gamma_S(R_S(t),-t),
\end{equation}
where $\ds \Gamma_S(x,t)=\sum_{\ga \in\mC_S}x^{\wid(\ga)}\frac{t^{|\ga|}}{|\ga|!}$.


\begin{example}
For $S=[-3..3]\setminus \{-2,1\}$, we have $m=3$ and $\mC_S=\{\ga_k,~k\geq 1\}\cup\{\ga'\}$, where $\ga_k=((2,2,\ldots,2),(1,2,\ldots,k))$ and $\ga'=((1),(2,1))$. 
Thus $\Gamma_S(x,t)=\sum_{k\geq 1} t^k\frac{x^{2k+2}}{k!}+t^2x^5/2=x^2(e^{tx^2}-1)+t^2x^5/2$ and 
$$R_S(t)=1+R_S(t)^2(1-e^{-t R_S(t)^2})-t^2R_S(t)^5/2.$$
\end{example}

Next, we give an expression for $\Gamma_S(x,\tt)$ when $S$ is transitive.
For $k>1$ we denote by $\mathfrak{S}_k$ the set of permutations of $[k]$. For $\pi\in \mathfrak{S}_k$, we let $\asc(\pi)=\{i\in[k-1]~|~\pi(i)<\pi(i+1)\}$ and $\des(\pi)=\{i\in[k-1]~|~\pi(i)>\pi(i+1)\}$ be the number of ascents and descents of $\pi$ respectively. We denote 
$$\La(u,v,t)=\sum_{k=1}^\infty\sum_{\pi\in \mathfrak{S}_k}u^{\asc(\pi)}v^{\des(\pi)}\frac{t^k}{k!}.$$
This is the generating function of the \emph{homogeneous Eulerian polynomials}. Clearly,
$\La(u,u,t)=\frac{t}{1-tu}$ and $\La(u,0,t)=\frac{e^{tu}-1}{u}$. More generally, it is known~\cite{Carlitz:Eulerian-numbers} that 
\begin{equation}\label{eq:Eulerian-poly}
\La(u,v,t)=\frac{e^{tu}-e^{tv}}{u\,e^{tv}-v\,e^{tu}}.
\end{equation}

\begin{prop}
If $S\subseteq \ZZ$ is transitive, and $m=\max(|s|,~s\in S)$ then
\begin{equation}\label{eq:Gamma-transitive}
\Gamma_S(x,t)=x^{m+1}\La(\mu(x),\nu(x),t),
\end{equation}
where $\ds \mu(x)=\sum_{-d\in [-m..0]\setminus S}x^d$, and $\ds\nu(x)=\sum_{d\in [m]\setminus S}x^d$. 
Thus, $R_S(t)$ is the unique solution of 
\begin{equation}\label{eq:eqGF-transitive}
R_S(t)=1-R_S(t)^{m+1}\La(\mu(R_S(t)),\nu(R_S(t)),-t).
\end{equation}
\end{prop}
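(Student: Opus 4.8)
The plan is to evaluate the generating function $\Gamma_S(x,t) = \sum_{\ga\in\mC_S} x^{\wid(\ga)}\frac{t^{|\ga|}}{|\ga|!}$ explicitly when $S$ is transitive, by first simplifying the combinatorial description of $\mC_S$. Recall that $\mC_S$ consists of configurations $\ga=((d_1,\ldots,d_{k-1}),(v_1,\ldots,v_k))$ with the $v_i$ a permutation of $[k]$, each $d_i\in[0..m]$, and a global compatibility condition on all pairs $i<j$. My first step would be to invoke Lemma~\ref{lem:S-config-when-transitive} (or rather its configuration-level analogue), which under transitivity reduces the global pairwise condition to a \emph{local} one: for each consecutive pair, the condition relating $v_i,v_{i+1}$ to $d_i$ depends only on $d_i$. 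Concretely, for transitive $S$, the constraint becomes that for each $i\in[k-1]$, the pair $(d_i, \text{sign}(v_{i+1}-v_i))$ is admissible, meaning: if $v_i<v_{i+1}$ then $-d_i\notin S$ (equivalently $d_i\in[0..m]\setminus S$ read with the sign convention, so $-d_i\in[-m..0]\setminus S$), and if $v_i>v_{i+1}$ then $d_i\notin S\cup\{0\}$ (equivalently $d_i\in[m]\setminus S$).

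Once the condition is local, I would factor the sum over $\mC_S$ as a product of a contribution from the permutation structure and a contribution from the step labels. The width is $\wid(\ga)=d_1+\cdots+d_{k-1}+m+1$, so $x^{\wid(\ga)} = x^{m+1}\prod_{i=1}^{k-1}x^{d_i}$, which already splits as a product over the $k-1$ consecutive gaps. For a fixed underlying permutation $\pi\in\mathfrak{S}_k$ (identifying $(v_1,\ldots,v_k)$ with $\pi$), each ascent position $i\in\asc(\pi)$ may carry any $d_i$ with $-d_i\in[-m..0]\setminus S$, contributing a factor $\sum_{-d\in[-m..0]\setminus S}x^d = \mu(x)$, while each descent position $i\in\des(\pi)$ may carry any $d_i$ with $d_i\in[m]\setminus S$, contributing $\sum_{d\in[m]\setminus S}x^d = \nu(x)$. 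Summing over all assignments of the $d_i$ therefore yields $\mu(x)^{|\asc(\pi)|}\nu(x)^{|\des(\pi)|}$, and summing over $\pi\in\mathfrak{S}_k$ and over $k\geq 1$ gives exactly $x^{m+1}\sum_{k\geq1}\sum_{\pi\in\mathfrak{S}_k}\mu(x)^{\asc(\pi)}\nu(x)^{\des(\pi)}\frac{t^k}{k!} = x^{m+1}\La(\mu(x),\nu(x),t)$, which is \eqref{eq:Gamma-transitive}. Then \eqref{eq:eqGF-transitive} follows immediately by substituting this expression into \eqref{eq:GF-R}, i.e.\ $R_S(t)=1-\Gamma_S(R_S(t),-t)$.

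The main obstacle I anticipate is the careful bookkeeping at the reduction step: one must verify that transitivity of $S$ is precisely what allows the global pairwise admissibility condition in the definition of $\mC_S$ to collapse to the local consecutive condition, so that the step labels $d_i$ genuinely decouple across gaps. This is the configuration-side counterpart of Lemma~\ref{lem:S-config-when-transitive}, and while the statement of that lemma is phrased for cadet sequences of trees, the identical induction argument applies verbatim to $(m,N)$-configurations via Remark~\ref{rk:cadet-seq=config} (specialized to $N=1$). Care is also needed to match sign conventions: the definition of $\mC_S$ uses the rule ``$v_i<v_j$ and $-\sum d_p\notin S$'' versus ``$v_i>v_j$ and $\sum d_p\notin S\cup\{0\}$,'' so I must check that $\mu$ collects exactly the ascent-admissible label values and $\nu$ the descent-admissible ones, consistent with the defining sums $\mu(x)=\sum_{-d\in[-m..0]\setminus S}x^d$ and $\nu(x)=\sum_{d\in[m]\setminus S}x^d$. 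The remaining identifications — factoring $x^{\wid}$ over gaps and recognizing the homogeneous Eulerian generating function $\La$ — are then routine.
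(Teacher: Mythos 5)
Your proposal is correct and follows essentially the same route as the paper: both reduce the global pairwise condition defining $\mC_S$ to the local consecutive condition via (the configuration analogue of) Lemma~\ref{lem:S-config-when-transitive}, then factor $x^{\wid(\ga)}=x^{m+1}\prod_i x^{d_i}$ over the gaps so that ascents contribute $\mu(x)$ and descents contribute $\nu(x)$, recovering $x^{m+1}\La(\mu(x),\nu(x),t)$ and then \eqref{eq:eqGF-transitive} from \eqref{eq:GF-R}. Your extra care about sign conventions and the decoupling of the $d_i$ is exactly the bookkeeping the paper leaves implicit.
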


\begin{example} For $S=[-m..m]$, we have $\mu(x)=\nu(x)=0$. Hence $\Gamma_S(x,t)=tx^{m+1}$, and $R_{S}(t)=1+t\,R_{S}(t)^{m+1}$.
\end{example}

\begin{proof}
Lemma~\ref{lem:S-config-when-transitive} gives a simple characterization of $\mC_S$. Namely $\ga=((d_1,\ldots,d_{k-1}),(u_1,\ldots,u_{k}))$ is in $\mC_S$
if and only if $u_1,\ldots,u_{k}$ is a permutation of $[k]$, and for all $i\in[k-1]$, $d_i\in[0..m]$ and either ($v_i<v_{i+1}$ and $-d_i\notin S$) or ($v_i>v_{i+1}$ and $d_i\notin S\cup\{0\}$). Thus, for each ascent $i$ of the permutation $u_1,\ldots,u_{k}$, $-d_i$ is in $[-m..0]\setminus S$, and for each descent $i$, $d_i$ is in $[m]\setminus S$. This gives \eqref{eq:Gamma-transitive}.
\end{proof}

Let us first consider the special case $[m]\subseteq S$. Equation~\eqref{eq:Gamma-transitive-interval} below is~\cite[Theorem 9.1]{Postnikov:coxeter-hyperplanes} of Postnikov and Stanley.

\begin{cor}
If $[m]\subseteq S\subset [-m..m]$ and $\{s<0,s\notin S\}$ is closed under addition, then 
\begin{equation}\label{eq:Gamma-transitive-positive}
R_S(t)=1+R_S(t)^{m+1}\frac{1-e^{-t\,\mu(R_S(t))}}{\mu(R_S(t))},
\end{equation}
where $\ds\mu(x)=\sum_{-d\in [-m..0]\setminus S}x^d$. In particular, if $S=[-\ell..m]$ with $\ell\in[-1..m-1]$, then 
\begin{equation}\label{eq:Gamma-transitive-interval}
R_S(t)^{m-\ell}=\exp\left(t\frac{R_{S}(t)^{m+1}-R_{S}(t)^{\ell+1}}{R_{S}(t)-1}\right).
\end{equation}
\end{cor}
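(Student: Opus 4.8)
The plan is to specialize the preceding Proposition to the hypotheses of the corollary. First I would record that $S$ is transitive: since $[m]\subseteq S\subset[-m..m]$ and the set of negative integers not in $S$ is assumed closed under addition, the relevant bullet in the Example following Definition~\ref{def:transitive} applies and yields transitivity of $S$. This licenses the use of equation~\eqref{eq:eqGF-transitive}, namely $R_S(t)=1-R_S(t)^{m+1}\La(\mu(R_S(t)),\nu(R_S(t)),-t)$, with $\mu(x)=\sum_{-d\in[-m..0]\setminus S}x^d$ and $\nu(x)=\sum_{d\in[m]\setminus S}x^d$.

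Next I would exploit the hypothesis $[m]\subseteq S$, which forces $[m]\setminus S=\emptyset$ and hence $\nu(x)=0$ identically. Using the evaluation $\La(u,0,t)=\dfrac{e^{tu}-1}{u}$ recorded just before the Proposition, with $t$ replaced by $-t$, one gets $\La(\mu(R_S(t)),0,-t)=\dfrac{e^{-t\mu(R_S(t))}-1}{\mu(R_S(t))}$. Substituting this into~\eqref{eq:eqGF-transitive} and absorbing the overall minus sign gives exactly~\eqref{eq:Gamma-transitive-positive}. Here one should note that $S\subset[-m..m]$ is strict while $[m]\subseteq S$, so $[-m..0]\setminus S\neq\emptyset$ and $\mu$ has positive constant term $\mu(1)=|[-m..0]\setminus S|$; thus $\mu(R_S(t))$ is an invertible power series and the quotient is well defined.

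For the special case $S=[-\ell..m]$ with $\ell\in[-1..m-1]$, I would first verify the corollary's hypotheses: $[m]\subseteq[-\ell..m]$ holds, and the negative integers not in $S$ form the set $\{a\in\ZZ~|~a\leq-\ell-1\}$, which is closed under addition precisely because $\ell\geq-1$ (so that $-2\ell-2\leq-\ell-1$). Then I would compute $\mu$ explicitly: the indices are those $d$ with $-d\in[-m..0]$ and $-d\notin S$, i.e. $d\in[\ell+1..m]$, so $\mu(x)=\sum_{d=\ell+1}^{m}x^d=\dfrac{x^{m+1}-x^{\ell+1}}{x-1}$ (the final fraction being shorthand for this polynomial, which makes $\frac{R^{m+1}-R^{\ell+1}}{R-1}$ in~\eqref{eq:Gamma-transitive-interval} a genuine power series). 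The key algebraic step is then to clear denominators in~\eqref{eq:Gamma-transitive-positive}: writing $R=R_S(t)$, multiply through by $\mu(R)$ and use $(R-1)\mu(R)=R^{m+1}-R^{\ell+1}$ to obtain $R^{m+1}-R^{\ell+1}=R^{m+1}\bigl(1-e^{-t\mu(R)}\bigr)$, whence $R^{\ell+1}=R^{m+1}e^{-t\mu(R)}$, i.e. $R^{m-\ell}=e^{t\mu(R)}$. Substituting the closed form of $\mu(R)$ gives~\eqref{eq:Gamma-transitive-interval}.

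The computation is routine; the only place demanding care is tracking the two sign reversals—the $-t$ inside $\La$ prescribed by~\eqref{eq:eqGF-transitive}, and the overall minus sign in that same equation—so that the exponent in the final identity carries the correct positive sign $t\,\mu(R)$ rather than $-t\,\mu(R)$.
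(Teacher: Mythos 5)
Your proof is correct and follows essentially the same route as the paper: both deduce transitivity from the hypotheses, note that $[m]\subseteq S$ forces $\nu=0$ so that $\La(\mu(x),0,t)=\frac{e^{t\mu(x)}-1}{\mu(x)}$, substitute into \eqref{eq:eqGF-transitive} to get \eqref{eq:Gamma-transitive-positive}, and then specialize $\mu(x)=\frac{x^{m+1}-x^{\ell+1}}{x-1}$ and clear denominators to obtain \eqref{eq:Gamma-transitive-interval}. Your write-up simply makes explicit the sign bookkeeping and the algebraic simplification that the paper leaves as ``readily gives.''
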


\begin{proof}
A set $S$ satisfying the assumptions is transitive. Moreover $\nu(x)=0$ so that $\La(\mu(x),\nu(x),t)=\frac{e^{t\mu(x)}-1}{\mu(x)}$. Thus \eqref{eq:eqGF-transitive} gives \eqref{eq:Gamma-transitive-positive}. 
In the particular case $S=[-\ell..m]$ we also have $\mu(x)=\frac{x^{m+1}-x^{\ell+1}}{x-1}$, and \eqref{eq:Gamma-transitive-positive} readily gives \eqref{eq:Gamma-transitive-interval}.
\end{proof}

In the special case $S=-S$, we recover~\cite[Theorem 2.3]{Stanley:hyperplane-interval-orders-overview} of Stanley and~\cite[Theorem 2.4]{Stanley:hyperplane-interval-orders-overview} (written in a slightly different form) which is credited to Athanasiadis.

\begin{cor}[\cite{Stanley:hyperplane-interval-orders-overview}]\label{cor:Ssym}
If $S\subseteq \ZZ$ satisfies $0\in S$, $\{-s,~s\in S\}=S$, and $\NN\setminus S$ is closed under addition, then for all $n>0$,
\begin{equation}\label{eq:nb-regions-Ssym}
r_{A_S(n)}=(n-1)![x^{n-1}]\left(1+x\sum_{d\in[0..m]\cap S}(x+1)^d\right)^n.
\end{equation}
where $m=\max(|s|,~s\in S)$. Moreover, 
\begin{equation}\label{eq:Ssym-0}
R_{S\setminus \{0\}}(t)=R_S(1-e^{-t}). 
\end{equation}
\end{cor}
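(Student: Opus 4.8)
The plan is to treat the two assertions separately, both times feeding the symmetry hypothesis into the generating function equation \eqref{eq:eqGF-transitive}. Since $S=-S$ and $\NN\setminus S$ is closed under addition, $S$ is transitive by the criterion recorded in the example following Definition~\ref{def:transitive}, so \eqref{eq:Gamma-transitive} and \eqref{eq:eqGF-transitive} apply. Write $\mu(x)=\sum_{d\in[1..m]\setminus S}x^d$ and $\sigma(x)=\sum_{d\in[0..m]\cap S}x^d$, so that $\mu(x)+\sigma(x)=\sum_{d=0}^{m}x^d=\frac{x^{m+1}-1}{x-1}$. Because $S$ is symmetric and contains $0$, the two polynomials $\mu(x)$ and $\nu(x)$ appearing in \eqref{eq:Gamma-transitive} both equal $\mu(x)$, so $\La(\mu,\mu,-t)=\frac{-t}{1+t\mu}$ and \eqref{eq:eqGF-transitive} becomes $(R_S-1)(1+t\,\mu(R_S))=t\,R_S^{\,m+1}$.

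For the first assertion I would clear $\mu$ in favour of $\sigma$. Substituting $\mu(R_S)=\frac{R_S^{\,m+1}-1}{R_S-1}-\sigma(R_S)$ into this relation and cancelling collapses it to the cleaner identity $R_S-1=\frac{t}{1-t\,\sigma(R_S)}$. Setting $W=R_S-1$ (a series with $W(0)=0$), this reads $W=t\,\phi(W)$ with $\phi(w)=1+w\,\sigma(1+w)$ and $\phi(0)=1\neq0$. Lagrange inversion then gives, for $n\geq1$, $[t^n]W=\frac1n[w^{n-1}]\phi(w)^n$, and since $r_{A_S(n)}=n!\,[t^n]R_S=n!\,[t^n]W$ and $\sigma(1+w)=\sum_{d\in[0..m]\cap S}(w+1)^d$, this is exactly \eqref{eq:nb-regions-Ssym}. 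As a consistency check, the Catalan case $S=[-m..m]$, where $\sigma(1+x)=\frac{(x+1)^{m+1}-1}{x}$ and $\phi(w)=(w+1)^{m+1}$, returns $(n-1)!\binom{(m+1)n}{n-1}=\frac{((m+1)n)!}{(mn+1)!}$.

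For the second assertion, put $S'=S\setminus\{0\}$; it is symmetric and $\NN\setminus S'=\NN\setminus S$ is still closed under addition, so $S'$ is transitive as well, with the same $m$ (assuming $m\geq1$; the case $S=\{0\}$, $S'=\emptyset$, where $R_{\{0\}}=\frac1{1-t}$ and $R_\emptyset=e^{t}$, is checked directly). Computing the data of \eqref{eq:Gamma-transitive} for $S'$ gives $\nu'(x)=\mu(x)$ but $\mu'(x)=1+\mu(x)$, the extra $1$ coming from the exponent $d=0$, which now lies in $[-m..0]\setminus S'$. Using the closed form $\La(u,v,t)=\frac{e^{tu}-e^{tv}}{u\,e^{tv}-v\,e^{tu}}$ with $u=1+\mu$ and $v=\mu$, the factor $e^{t\mu}$ cancels and one finds $\La(1+\mu,\mu,t)=\frac{e^{t}-1}{1-\mu(e^{t}-1)}$. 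Comparing with $\La(\mu,\mu,s)=\frac{s}{1-\mu s}$ yields the key identity $\Gamma_{S'}(x,t)=\Gamma_S(x,e^{t}-1)$. I would finish by substituting $\tau=1-e^{-t}$ into \eqref{eq:GF-R} for $S$: since $-\tau=e^{-t}-1$, the identity turns $\Gamma_S(R_S(\tau),-\tau)$ into $\Gamma_{S'}(R_S(\tau),-t)$, so the composition $\hat R(t):=R_S(1-e^{-t})$ (a well-defined series because $1-e^{-t}$ has zero constant term) satisfies $\hat R=1-\Gamma_{S'}(\hat R,-t)$, the defining equation \eqref{eq:GF-R} for $S'$. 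By the uniqueness in Theorem~\ref{thm:GF-multi}, $\hat R=R_{S'}$, which is \eqref{eq:Ssym-0}.

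The main obstacle is the algebraic bookkeeping that produces the two clean identities: first the cancellation turning $(R_S-1)(1+t\,\mu(R_S))=t\,R_S^{\,m+1}$ into $R_S-1=\frac{t}{1-t\,\sigma(R_S)}$, so that Lagrange inversion applies with $\phi(0)\neq0$; and second the verification that $\mu'=1+\mu$, $\nu'=\mu$ is precisely what makes $\La(1+\mu,\mu,t)$ collapse to $\frac{e^{t}-1}{1-\mu(e^{t}-1)}$ and hence yields $\Gamma_{S'}(x,t)=\Gamma_S(x,e^{t}-1)$. Once these are in hand, the Lagrange inversion and the uniqueness argument are routine.
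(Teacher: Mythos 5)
Your proof is correct and takes essentially the same route as the paper: both specialize \eqref{eq:eqGF-transitive} using $\mu=\nu$ to get a Lagrange-invertible equation $R_S-1=t\,\phi(R_S-1)$ with $\phi(w)=1+w\sum_{d\in[0..m]\cap S}(w+1)^d$ (your $\phi$ equals the paper's $\Theta$), and both establish $\Gamma_{S\setminus\{0\}}(x,t)=\Gamma_S(x,e^t-1)$ from $\mu'=1+\mu$, $\nu'=\mu$ and conclude \eqref{eq:Ssym-0} by uniqueness of the solution of \eqref{eq:GF-R}. The only differences are cosmetic bookkeeping (introducing $\sigma$ versus the paper's direct simplification of $(x+1)^{m+1}-x\nu(x+1)$), plus your welcome check of the degenerate case $S=\{0\}$.
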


\begin{proof}
A set $S$ satisfying the assumptions is transitive. Moreover, $\mu(x)=\nu(x)$, so that \eqref{eq:eqGF-transitive} becomes
\begin{equation}\label{eq:eqGF-Ssym}
\ds R_S(t)=1+\frac{t\,R_S(t)^{m+1}}{1+t\,\nu(R_S(t))},
\end{equation}
where $\nu(x)=\sum_{d\in [m]\setminus S}x^d$. 
This gives 
$\ds \tR(t)=t\Theta(\tR(t)),$
where $\tR(t)=R_S(t)-1$ and $\Theta(x)=(x+1)^{m+1}-x\,\nu(x+1)=1+x\sum_{d\in[0..m]\cap S}(x+1)^d$. Hence, Lagrange inversion formula gives \eqref{eq:nb-regions-Ssym}. 
Moreover, $S\setminus \{0\}$ is also transitive, and 
$$\Ga_{S\setminus \{0\}}(x,t)=x^{m+1}\La(\nu(x)+1,\nu(x),t)=x^{m+1}\frac{e^t-1}{1-(e^t-1)\nu(x)}=\Ga_{S}(x,e^t-1).$$
Thus \eqref{eq:eqGF-transitive} becomes 
$$R_{S\setminus \{0\}}(t)=1+\frac{(1-e^{-t})\,R_{S\setminus \{0\}}(t)^{m+1}}{1+(1-e^{-t})\,\nu(R_{S\setminus \{0\}}(t))}.$$
Comparing this equation with \eqref{eq:eqGF-Ssym} gives \eqref{eq:Ssym-0}.
\end{proof}

\medskip
\subsection{Generating functions for transitive arrangements in the case $N> 1$.}~\\
In this subsection, we return to the general case $N\geq 1$ and consider several transitive arrangements.

\begin{thm}\label{thm:GF-transitive}
Suppose $\hSS=(S_{a,b})_{1\leq a\leq b\leq N}$ is multi-transitive. 
Let $\Ga_1(x,\tt),\ldots,\Ga_N(x,\tt)$ be the series defined by the system of linear equations
\begin{equation}\label{eq:Gaa}
\Ga_a(x,\tt)=\La(\mu_{a,a}(x),\nu_{a,a}(x),t_a)\cdot\left(1+\sum_{b=1}^{a-1}\nu_{b,a}(x)\Ga_b(x,\tt)+\sum_{b=a+1}^{N}\mu_{a,b}(x)\Ga_b(x,\tt) \right),
\end{equation}
where $\La$ is defined by \eqref{eq:Eulerian-poly}, and for all $1\leq a\leq b\leq N$, $ \mu_{a,b}(x)=\sum_{-d\in [-m..0]\setminus S_{a,b}}x^d$, and $\nu_{a,b}(x)=\sum_{d\in [m]\setminus S_{a,b}}x^d$.
Then $\Ga_\hSS(x,\tt)=x^{m+1}\sum_{a=1}^N\Ga_a(x,\tt)$ so that 
$$R_\hSS(\tt)=1-R_\hSS(\tt)^{m+1}\sum_{a=1}^N\Ga_a(R_\hSS(\tt),-\tt).$$
\end{thm}

\begin{proof}
Let $\mC_{\hSS,a}$ be the set of configurations $\ga=((d_1,\ldots,d_{k-1}),(v_1,\ldots,v_k))\in\mC_\hSS$ such that $v_1$ has the form $(a,i)$ for some $i$.
We claim that $\Ga_a(x,\tt)$ is the generating function of configurations in $\mC_{\hSS,a}$. More precisely,
$$\Ga_{a}(x,\tt)=\sum_{\ga \in\mC_\hSS}x^{\wid(\ga)-m-1}\frac{\tt^{|\ga|}}{|\ga|!}.$$
Indeed, $\mC_{\hSS,a}$ has a simple description (see proof of Theorem~\ref{thm:unsigned-count-multi}), and \eqref{eq:Gaa} simply translates the decomposition of configurations in $\mC_{\hSS,a}$, at the first $p\in[k]$ such that $v_p$ has the form $(b,j)$ with $b\neq a$ (the term 1 in \eqref{eq:Gaa} corresponds to the case where there is no such $p$).
\end{proof}

As an illustration of Theorem \ref{thm:GF-transitive}, we treat two examples inspired by \cite{Gessel:Counting-binary-trees-descents}.

\begin{example} 
Suppose first that $S_{a,a}=\{-1,0,1\}$ for all $a\in [N]$ and $S_{a,b}=\{-1,0\}$ for all $1\leq a<b\leq N$. Then \eqref{eq:Gaa} reads 
$\ds \Ga_a(x,\tt)=t_a(1+\sum_{b=1}^{a-1}x\Ga_b(x,\tt))$. 
This gives $\ds \Ga_a(x,\tt)=\sum_{k>0}x^{k-1}\sum_{1\leq i_1<i_2<\cdots<i_k=a}\prod_{j=1}^k t_{i_j}$, so that 
$$\Ga_{\hSS}(x,\tt)=x\sum_{k>0}x^{k}\sum_{1 \leq i_1<i_2<\cdots<i_k\leq N}\prod_{j=1}^n t_{i_j}=x\left(\prod_{a=1}^N 1+t_a x\right)- x.$$
Thus, \eqref{eq:eqGF-transitive} gives
\begin{equation}\label{eq:Gessel1}
R_\hSS(\tt)=\prod_{a=1}^N \frac{1}{1-t_aR_{\hSS}(\tt)}.
\end{equation}
Now consider $\hSS'=(S_{a,b}')_{1\leq a\leq b\leq N}$ with $S_{a,a}'=\{0\}$ for all $a\in [N]$ and $S_{a,b}'=\{0,1\}$ for all $1\leq a<b\leq N$.
Equation \eqref{eq:Gaa} reads 
$\ds \Ga_a(x,\tt)=\frac{t_a}{1-t_ax}(1+\sum_{b=a+1}^Nx\Ga_b(x,\tt))$.
hence $\ds \Ga_a(x,\tt)=\sum_{k>0}x^{k-1}\sum_{a=i_1\leq i_2 \leq \cdots \leq i_k\leq N}\prod_{j=1}^k t_{i_j}$, and
$$\Ga_{\hSS'}(x,\tt)=x\sum_{k>0}x^{k}\sum_{1 \leq i_1\leq i_2\leq\cdots \leq i_k\leq N}\prod_{j=1}^n t_{i_j}=x\left(\prod_{a=1}^N\frac{1}{1-t_a x}\right)- x.$$
Thus, \eqref{eq:eqGF-transitive} gives
\begin{equation}\label{eq:Gessel2}
R_{\hSS'}(\tt)=\prod_{a=1}^N 1+t_aR_{\hSS'}(\tt).
\end{equation}
Note that \eqref{eq:Gessel1} and \eqref{eq:Gessel2} imply that $R_{\hSS}(\tt)$ and $R_{\hSS'}(\tt)$ are symmetric functions in $(t_1,\ldots,t_N)$, which is not obvious from the definition. This is a special case of a result proved in \cite{Gessel:Counting-binary-trees-descents}. In fact, it follows from  \eqref{eq:GF-regions-transitive-multi} that $R_{\hSS}(\tt)=1+B(1,1,0,1,\tt)$ and $R_{\hSS'}(\tt)=1+B(1,1,1,0,\tt)$ for the series $B(u_1,u_2,v_1,v_2,\tt)$ considered in \cite{Gessel:Counting-binary-trees-descents} which counts trees in $\bigcup_{\nn\in\NN^N}\mT^{(1)}(\nn)$ according to certain ascent and descent statistics. Accordingly, \eqref{eq:Gessel1} and \eqref{eq:Gessel2} are special cases of the equation given for $B(u_1,u_2,v_1,v_2,\tt)$ in \cite{Gessel:Counting-binary-trees-descents}.
\end{example}

We now state the extensions of Corollary~\ref{cor:Ssym} to $N>1$.

\begin{cor}\label{cor:GF-Ssym-multi}
Suppose that $\hSS=(S_{a,b})_{1\leq a\leq b\leq N}$ is multi-transitive, and that for all $1\leq a\leq b\leq N$, the set $S_{a,b}$ contains 0 and satisfies $\{-s,~s\in S_{a,b}\}=S_{a,b}$. Then for all $\nn\neq (0,\ldots,0)$,
$$r_{\mA_\hSS(\nn)}=[\tt^\nn]\left(\left(\sum_{a=1}^N t_a\right)\left(\prod_{b=1}^N g_b(\tt)^{n_b-1}\right)\,\det\!\left(\delta_{i,j}\,g_i(\tt)-t_j\frac{\partial g_i(\tt)}{\partial t_j}\right)_{i,j\in[N]}\right),$$
where $\ds g_a(\tt)=1+\sum_{b=1}^N t_b \sum_{d\in [0..m]\cap S_{a,b}}\bigg(1+\sum_{c=1}^N t_c\bigg)^d$, and $\de_{i,j}$ is the Kronecker delta.
\end{cor}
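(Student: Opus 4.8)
The plan is to massage the functional equation for $R:=R_\hSS(\tt)$ coming from the preceding theorem into a polynomial system $w_a=t_ag_a(\mathbf w)$ to which multivariate Lagrange--Good inversion applies, and then to read the determinantal formula off that inversion.

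First I would simplify the linear system \eqref{eq:Gaa} using the symmetry hypothesis. Since $\{-s:s\in S_{a,b}\}=S_{a,b}$ we have $\mu_{a,b}(x)=\nu_{a,b}(x)$ for all $a,b$, so that $\La(\mu_{a,a},\nu_{a,a},t_a)=\La(\nu_{a,a},\nu_{a,a},t_a)=t_a/(1-t_a\nu_{a,a}(x))$. Substituting this into \eqref{eq:Gaa} and clearing the denominator, the system collapses to the single clean recursion
\[
\Ga_a(x,\tt)=t_a\Big(1+\sum_{b=1}^N\nu_{a,b}(x)\,\Ga_b(x,\tt)\Big),
\]
where the sum now runs over all $b$ (including $b=a$). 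I also record from the preceding theorem the functional equation $R=1-R^{m+1}\sum_{a=1}^N\Ga_a(R,-\tt)$.

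The crux is the change of variables. Put $w_a:=-R^{m+1}\Ga_a(R,-\tt)$ and $W:=\sum_aw_a$; the functional equation immediately gives $W=R-1$, i.e. $R=1+W$. Substituting $\Ga_a(R,-\tt)=-w_a/R^{m+1}$ into the recursion evaluated at $(x,\tt)=(R,-\tt)$ and simplifying yields
\[
w_a=t_a\Big(R^{m+1}-\sum_{b=1}^Nw_b\,\nu_{a,b}(R)\Big)=t_a\Big((1+W)^{m+1}-\sum_{b=1}^Nw_b\,\nu_{a,b}(1+W)\Big).
\]
It then remains to identify the bracketed polynomial with $g_a(\mathbf w)$. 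Since $0\in S_{a,b}$, one has $\sum_{d\in[0..m]\cap S_{a,b}}x^d=\frac{x^{m+1}-1}{x-1}-\nu_{a,b}(x)$; substituting $x=1+W$ (so that $x-1=W=\sum_cw_c$) turns the defining expression $g_a=1+\sum_bw_b\sum_{d\in[0..m]\cap S_{a,b}}(1+W)^d$ into exactly $(1+W)^{m+1}-\sum_bw_b\nu_{a,b}(1+W)$. Thus we arrive at the system $w_a=t_ag_a(\mathbf w)$ with $g_a(\mathbf 0)=1\neq0$, and $R_\hSS=1+\sum_aw_a$. Pinning down this substitution and checking the polynomial identity is the main obstacle; everything afterward is bookkeeping.

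Finally I would apply the Lagrange--Good formula to $w_a=t_ag_a(\mathbf w)$. Writing $t_i=w_i/g_i(\mathbf w)$ and computing the Jacobian $\partial t_i/\partial w_j=g_i^{-2}\big(\delta_{ij}g_i-w_i\,\partial g_i/\partial w_j\big)$, the residue form of Lagrange--Good gives, for $\nn\neq\mathbf0$,
\[
[\tt^\nn]\Big(\sum_aw_a\Big)=[\mathbf w^\nn]\Big(\big(\textstyle\sum_aw_a\big)\prod_ag_a(\mathbf w)^{n_a-1}\det\big(\delta_{ij}g_i-w_i\,\partial g_i/\partial w_j\big)_{ij}\Big).
\]
To match the determinant in the statement I would use that $\mathrm{diag}(w_a)$ and $\mathrm{diag}(g_a)$ are diagonal, hence commute, so conjugating the matrix by $\mathrm{diag}(w_a)$ gives $\det(\delta_{ij}g_i-w_i\,\partial_jg_i)=\det(\delta_{ij}g_i-w_j\,\partial_jg_i)$. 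Renaming $\mathbf w\mapsto\tt$ and recalling $r_{\mA_\hSS(\nn)}=\nn!\,[\tt^\nn]R_\hSS(\tt)$ (from the exponential definition \eqref{eq:def-gfR} of $R_\hSS$) then produces the claimed expression.
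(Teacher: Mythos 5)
Your proposal is correct and follows essentially the same route as the paper: reduce \eqref{eq:Gaa} via $\mu_{a,b}=\nu_{a,b}$ to the system $\Ga_a=t_a\big(1+\sum_b\nu_{a,b}\Ga_b\big)$, change variables to $w_a=-R_\hSS^{m+1}\Ga_a(R_\hSS,-\tt)$ so that $w_a=t_a\,g_a(\mathbf{w})$ and $R_\hSS=1+\sum_a w_a$, and apply Lagrange--Good inversion (the paper merely cites the inversion formula, where you spell out the Jacobian computation, the polynomial identity for $g_a$, and the conjugation by $\mathrm{diag}(w_a)$ needed to match the determinant). One small point: your final step correctly yields $r_{\mA_\hSS(\nn)}=\nn!\,[\tt^\nn](\cdots)$, so a factor $\nn!$ should sit in front of the coefficient extraction; the statement as printed omits it (for instance, with $\nn=(2,0)$ in the paper's example the displayed right-hand side evaluates to $2$, while $\mA_\hSS(2,0)=\mA_{\{-1,0,1\}}(2)$ has $4$ regions), so your derivation is the accurate one.
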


\begin{example} Let $N=2$ and let $\hSS=(S_{a,b})_{1\leq a\leq b\leq 2}$ with $S_{1,1}=S_{2,2}=\{-1,0,1\}$ and $S_{1,2}=\{0\}$. Then 
$g_1(t_1,t_2):=1+t_1(2+t_1+t_2)+t_2$, $g_2(t_1,t_2):=1+t_2(2+t_1+t_2)+t_1$, the determinant is $(1+t_1+t_2)(1-t_1^2-t_2^2)$ and Corollary~\ref{cor:GF-Ssym-multi} gives
$$r_{\mA_\hSS(n_1,n_2)}=[t_1^{n_1}t_2^{n_2}]\left((t_1+t_2)(1+t_1+t_2)(1-t_1^2-t_2^2)\,g_1(t_1,t_2)^{n_1-1}\,g_2(t_1,t_2)^{n_2-1}\right).$$
\end{example}

Corollary \ref{cor:GF-Ssym-multi} is an application of the multivariate Lagrange inversion formula~\cite{Gessel:multivariate-Lagrange,Good:multivariate-Lagrange} that we now recall for the readers' convenience.

\begin{lemma}[Lagrange inversion formula]\label{lem:Lagrange-inversion} Let $\tt=(t_1,\ldots,t_N)$ be indeterminates. Let $g_1(\tt),\ldots,g_N(\tt)$ be series in $\CC[[\tt]]$ with non-zero constant terms. Let $f_1(\tt),\ldots,f_N(\tt)$ be the unique series in $\CC[[\tt]]$ such that for all $i\in[N]$ $f_i(\tt)=t_ig_i(f_1(\tt),\ldots,f_N(\tt))$. Then for all $a\in [N]$ and for all tuples $\nn=(n_1,\ldots,n_N)\in \NN^N$, 
$$[\tt^\nn]f_a(\tt)=[\tt^\nn]\left(t_a\cdot \left(\prod_{b=1}^N g_b(\tt)^{n_b-1}\right)\cdot \det\!\left(\delta_{i,j}\,g_i(\tt)-t_j\frac{\partial g_i(\tt)}{\partial t_j}\right)_{i,j\in[N]}\right).$$
\end{lemma}

\begin{proof}[Proof of Corollary \ref{cor:GF-Ssym-multi}] 
Since for all $a\in[N]$, $\La(\mu_{a,a}(x),\nu_{a,a}(x),t_a)=\frac{t_a}{1-t_a\nu_{a,a}(x)}$, Equation \eqref{eq:Gaa} can be rewritten as
$\Ga_a(x,\tt)=t_a(1+\sum_{b=1}^N\nu_{a,b}(x)\Ga_b(x,\tt))$.
Hence, denoting $R(\tt)=R_{\hSS}(\tt)-1$ and $ R_a(\tt)=-R_{\hSS}(\tt)^{m+1}\Ga_a(R_{\hSS}(\tt),-\tt)$, we get 
$$R(\tt)=\sum_{a=1}^N R_a(\tt), \textrm{ and for all $a\in[N]$, } \ds R_a(\tt)=t_a\, g_a(R_1(\tt),\ldots,R_N(\tt)),$$ 
where 
$$g_a(r_1,\ldots,r_N)=\bigg(1+\sum_{c=1}^N r_c\bigg)^{m+1}-\sum_{b=1}^N r_b\,\nu_{a,b}\bigg(1+\sum_{c=1}^N r_c\bigg)=1+\sum_{b=1}^N r_b \sum_{d\in [0..m]\cap S_{a,b}}\!\!\bigg(1+\sum_{c=1}^Nr_c\bigg)^d.$$
Applying Lemma \ref{lem:Lagrange-inversion} gives the result.
\end{proof}

\begin{cor}
Suppose that $\hSS=(S_{a,b})_{1\leq a\leq b\leq N}$ is multi-transitive, and that for some $a\in[N]$, $S_{a,a}$ contains 0 and satisfies $\{-s,~s\in S_{a,a}\}=S_{a,a}$. Let $\hSS'$ be the same tuple as $\hSS$ except $S_{a,a}$ is replaced by $S_{a,a}\setminus \{0\}$. Then,
$$R_{\hSS'}(\tt)=R_{\hSS}(t_1,\ldots,t_{a-1},1-e^{-t_a},t_{a+1},\ldots,t_N).$$
\end{cor}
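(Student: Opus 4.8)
The plan is to mimic the proof of Corollary~\ref{cor:Ssym} (the case $N=1$), tracking how passing from $\hSS$ to $\hSS'$ affects the linear system \eqref{eq:Gaa} and then invoking uniqueness in \eqref{eq:GF-R-multi}. Throughout I write $\phi$ for the substitution replacing $t_a$ by $e^{t_a}-1$ and fixing the other variables, and $\psi$ for the substitution replacing $t_a$ by $1-e^{-t_a}$ and fixing the rest; note the elementary relation $\phi(-\tt)=-\psi(\tt)$, which will align the two substitutions at the end. Since $0$ does not contribute to $m=\max(|s|,~s\in\cup S_{a,b})$, the value of $m$ is the same for $\hSS$ and $\hSS'$.

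First I would record how the coefficients $\mu_{b,c},\nu_{b,c}$ change. Because $S_{a,a}$ is symmetric and contains $0$, the computation already made in the proof of Corollary~\ref{cor:Ssym} gives $\mu_{a,a}=\nu_{a,a}$ for $\hSS$, while for $\hSS'$ (where $S_{a,a}$ is replaced by $S_{a,a}\setminus\{0\}$) the function $\mu_{a,a}$ acquires the constant term and becomes $\nu_{a,a}+1$, whereas $\nu_{a,a}$ is unchanged. Every other $\mu_{b,c},\nu_{b,c}$ is unchanged, as it depends only on an off-diagonal set $S_{b,c}$ with $\{b,c\}\neq\{a\}$. Consequently the two systems \eqref{eq:Gaa} for $\hSS$ and $\hSS'$ differ \emph{only} in the prefactor of the $a$-th equation: for $\hSS$ it is $\La(\nu_{a,a},\nu_{a,a},t_a)=\frac{t_a}{1-t_a\nu_{a,a}}$, while for $\hSS'$ it is
$$\La(\nu_{a,a}+1,\nu_{a,a},t_a)=\frac{e^{t_a}-1}{1-(e^{t_a}-1)\,\nu_{a,a}}.$$
Using the closed form of $\La$ (equivalently, evaluating $\La(u,u,s)=s/(1-su)$ at $s=e^{t_a}-1$) yields the key identity $\La(\nu_{a,a}+1,\nu_{a,a},t_a)=\La(\nu_{a,a},\nu_{a,a},e^{t_a}-1)$; that is, the $\hSS'$-prefactor at $t_a$ equals the $\hSS$-prefactor at $e^{t_a}-1$.

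The heart of the argument is the substitution statement $\Ga_b^{\hSS'}(x,\tt)=\Ga_b^{\hSS}(x,\phi(\tt))$ for every $b$, the superscript indicating which tuple is used in \eqref{eq:Gaa}. To prove it, I would evaluate the $\hSS$-system at $\phi(\tt)$: each prefactor $\La(\mu_{b,b},\nu_{b,b},t_b)$ with $b\neq a$ is unaffected since $\phi$ fixes $t_b$, and the $a$-th prefactor becomes $\La(\nu_{a,a},\nu_{a,a},e^{t_a}-1)$, which equals the $\hSS'$-prefactor by the identity above; moreover all off-diagonal coefficients $\nu_{c,b},\mu_{b,c}$ depend only on $x$ and agree for $\hSS$ and $\hSS'$. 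Hence $\bigl(\Ga_1^{\hSS}(x,\phi(\tt)),\ldots,\Ga_N^{\hSS}(x,\phi(\tt))\bigr)$ solves exactly the $\hSS'$-system at $\tt$. Since each prefactor has zero constant term in its variable, this linear system is solvable by $\tt$-adic iteration and has a unique power-series solution, so the claim follows. Summing over $b$ and using $\Ga_\hSS(x,\tt)=x^{m+1}\sum_a\Ga_a(x,\tt)$ gives $\Ga_{\hSS'}(x,\tt)=\Ga_\hSS(x,\phi(\tt))$.

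Finally I would feed this into the fixed-point equation \eqref{eq:GF-R-multi}, namely $R_\hSS(\tt)=1-\Ga_\hSS(R_\hSS(\tt),-\tt)$. Substituting $\tt\mapsto\psi(\tt)$ (legitimate because $1-e^{-t_a}$ has zero constant term) and using $-\psi(\tt)=\phi(-\tt)$ together with $\Ga_\hSS(x,\phi(-\tt))=\Ga_{\hSS'}(x,-\tt)$, I obtain that $R_\hSS(\psi(\tt))$ satisfies $Y=1-\Ga_{\hSS'}(Y,-\tt)$. But $R_{\hSS'}(\tt)$ satisfies the very same equation, which by Theorem~\ref{thm:GF-multi} has a unique solution in $\QQ[[t_1,\ldots,t_N]]$; therefore $R_{\hSS'}(\tt)=R_\hSS(\psi(\tt))$, which is the assertion. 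The main obstacle is the careful bookkeeping in the substitution statement $\Ga_b^{\hSS'}(x,\tt)=\Ga_b^{\hSS}(x,\phi(\tt))$ and its reliance on uniqueness of the linear system; the key algebraic input, the Eulerian identity $\La(\nu+1,\nu,t)=\La(\nu,\nu,e^{t}-1)$, is precisely the $N=1$ computation already performed in Corollary~\ref{cor:Ssym} and presents no new difficulty.
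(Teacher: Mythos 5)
Your proposal is correct and follows essentially the same route as the paper: the paper likewise observes that removing $0$ from $S_{a,a}$ changes only the $a$-th prefactor of the system \eqref{eq:Gaa} via the identity $\La(\nu+1,\nu,t)=\La(\nu,\nu,e^{t}-1)$ (already computed in Corollary~\ref{cor:Ssym}), deduces $\Ga_i'(x,\tt)=\Ga_i(x,\tt')$ with $\tt'$ the substitution $t_a\mapsto e^{t_a}-1$, and concludes from the uniqueness of the solution of \eqref{eq:GF-R-multi}. Your write-up merely makes explicit the bookkeeping (the relation $\phi(-\tt)=-\psi(\tt)$ and the uniqueness of the power-series solution of the linear system) that the paper leaves implicit.
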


\begin{proof} Let $\Ga_1(x,\tt),\ldots,\Ga_N(x,\tt)$ be the series satisfying \eqref{eq:Gaa} for the tuple $\hSS$ and $\Ga_1'(x,\tt),\ldots,\Ga_N'(x,\tt)$ their analogues for $\hSS'$. 
As in the proof of Corollary~\ref{cor:Ssym}, we get for all $i\in[N]$ $\Ga_i'(x,\tt)=\Ga_i'(x,\tt')$, where $\tt'=(t_1,\ldots,t_{a-1},e^{t_a}-1,t_{a+1},\ldots,t_N)$. Thus $\Ga_{\hSS'}(x,\tt)=\Ga_\hSS(x,\tt')$, and $\Ga_{\hSS'}(x,-\tt)=\Ga_{\hSS}(x,-\tt'')$, for $\tt''=(t_1,\ldots,t_{a-1},1-e^{-t_a},t_{a+1},\ldots,t_N)$. Hence \eqref{eq:GF-R-multi} implies $R_{\hSS'}(\tt)=R_{\hSS}(\tt'')$.
\end{proof}

\begin{cor}\label{cor:Sab=SSym}
Suppose that $\hSS=(S_{a,b})_{1\leq a\leq b\leq N}$ is multi-transitive, and that for all $a<b$, $S_{a,b}=S$ for some set $S$ containing 0 and such that $\{-s,~s\in S\}=S$. Then
$$\Ga_{\hSS}(x,\tt)=x^{m+1}\frac{\De(x,\tt)}{1-\nu(x)\De(x,\tt)},$$
where $\ds \De(x,\tt)=\sum_{a=1}^N\frac{\La(\mu_{a}(x),\nu_{a}(x),t_a)}{1+\nu(x)\La(\mu_{a}(x),\nu_{a}(x),t_a)}$, $\ds \mu_a(x)=\sum_{-d\in [-m..0]\setminus S_{a,a}}x^d$, $\ds \nu_a(x)=\sum_{d\in [m]\setminus S_{a,a}}x^d$, and $\ds \nu(x)=\sum_{d\in [m]\setminus S}x^d$. 
\end{cor}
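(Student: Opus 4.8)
The plan is to start from the linear system~\eqref{eq:Gaa} for the series $\Ga_1,\ldots,\Ga_N$ and exploit the hypotheses on $S$ to collapse all off-diagonal coefficients. First I would observe that, because $S=-S$ and $0\in S$, for every pair $a\neq b$ the two relevant series both reduce to $\nu$: indeed $\nu_{b,a}(x)=\sum_{d\in[m]\setminus S}x^d=\nu(x)$ directly, while $\mu_{a,b}(x)=\sum_{-d\in[-m..0]\setminus S}x^d=\sum_{d\in[0..m]\setminus S}x^d=\nu(x)$, the last equality using $-d\notin S\iff d\notin S$ together with $0\in S$ to discard the term $d=0$. Writing $\La_a:=\La(\mu_a(x),\nu_a(x),t_a)$ for the diagonal factor, the system~\eqref{eq:Gaa} therefore simplifies to $\Ga_a=\La_a\bigl(1+\nu(x)\sum_{b\neq a}\Ga_b\bigr)$ for each $a$.

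Next I would introduce the single unknown $\Sigma:=\sum_{b=1}^N\Ga_b$, so that $\sum_{b\neq a}\Ga_b=\Sigma-\Ga_a$. Substituting and solving the resulting scalar relation for $\Ga_a$ gives $\Ga_a=\dfrac{\La_a\,(1+\nu(x)\Sigma)}{1+\nu(x)\La_a}$. The point is that each $\Ga_a$ is now expressed through the single series $\Sigma$, turning the $N\times N$ system into one equation.

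Summing this expression over $a$ and factoring out $(1+\nu(x)\Sigma)$ yields $\Sigma=(1+\nu(x)\Sigma)\,\De(x,\tt)$, where $\De(x,\tt)=\sum_{a=1}^N\frac{\La_a}{1+\nu(x)\La_a}$ is precisely the series defined in the statement. Solving this linear equation for $\Sigma$ gives $\Sigma=\dfrac{\De}{1-\nu(x)\De}$, and since $\Ga_\hSS(x,\tt)=x^{m+1}\sum_a\Ga_a=x^{m+1}\Sigma$, this is the claimed formula.

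I expect the only real care to be needed in the first step, namely the verification that the off-diagonal entries collapse to a single $\nu(x)$, where the symmetry $S=-S$ and the membership $0\in S$ must both be used. Once that collapse is in hand the argument is pure (scalar) linear algebra: reducing the $N$ equations to a single relation for $\Sigma$ and inverting it, with no generating-function manipulation beyond the definitions of $\La$ and $\De$.
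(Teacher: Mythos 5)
Your proof is correct and follows essentially the same route as the paper: collapse the off-diagonal coefficients of the system \eqref{eq:Gaa} to $\nu(x)$ using $S=-S$ and $0\in S$, solve each equation for $\Ga_a$ in terms of the total sum $\Sigma=\Ga_\hSS/x^{m+1}$, then sum over $a$ and invert the resulting linear relation. Your explicit verification that $\mu_{a,b}=\nu_{b,a}=\nu$ is a welcome detail that the paper leaves implicit.
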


\begin{example}
Let $N=2$ and $\hSS=(S_{a,b})_{1\leq a\leq b\leq 2}$ with $S_{1,1}=\{-1,0,1\}$, $S_{2,2}=\{0,1\}$ and $S_{1,2}=\{0\}$. Then with the notation of Corollary~\ref{cor:Sab=SSym}, we have $\nu(x)=x$, $\ds \De(x,\tt)=\frac{t_1}{1+t_1x}+\frac{e^{t_2x}-1}{x+(e^{t_2x}-1)x}$ and 
$$\Ga_\hSS(x,\tt)=-{\frac {x \left( 2\,t_{{1}}x{{\rm e}^{t_{{2}}x}}-t_{{1}}x+{{\rm e}^{t_{{2}}x}}-1 \right) }{t_{{1}}x{{\rm e}^{t_{{2}}x}}-t_{{1}}x-1}}.$$
\end{example}


\begin{proof}
Equation \eqref{eq:Gaa} can be rewritten as 
$$\Ga_a(x,\tt)=\La(\mu_{a}(x),\nu_{a}(x),t_a)(1+\nu(x)(\Ga_\hSS(x,\tt)/x^{m+1}-\Ga_a(x,\tt)).$$
Thus $\ds \Ga_a(x,\tt)=\frac{\La(\mu_{a}(x),\nu_{a}(x),t_a)}{1+\nu(x)\La(\mu_{a}(x),\nu_{a}(x),t_a)}(1+\nu(x)\Ga_\hSS(x,\tt)/x^{m+1})$, and 
$$\Ga_\hSS(x,\tt)=x^{m+1}\sum_{a=1}^N\Ga_a(x,\tt)=\De(x,\tt)\,(x^{m+1}+\nu(x)\Ga_\hSS(x,\tt)).$$
\end{proof}

\begin{remark} Our results for the number of regions of a multi-transitive arrangement $\mA_\hSS(\nn)$ have been derived from \eqref{eq:GF-regions-multi} using the decomposition of boxed trees in $\mU_\hSS(\nn)$.
They could alternately be obtained from \eqref{eq:GF-regions-transitive-multi} using the decomposition of trees in $\mT_\hSS(\nn)$. When $S_{a,b}=-S_{a,b}$ for all $a,b\in[N]$ one can simply use the decomposition obtained by deleting the root. In the general case, the decomposition would correspond to deleting all the vertices in the longest cadet sequence starting at the root.
\end{remark}


\medskip

\section{Proofs}\label{sec:proof}
As explained above, Theorem~\ref{thm:energy-count-multi} implies Theorems~\ref{thm:signed-count} and Theorems~\ref{thm:signed-count-multi}. It remains to prove Theorem~\ref{thm:energy-count-multi}. The proof breaks into three steps corresponding to Lemmas~\ref{lem:1-multi},~\ref{lem:2-multi}, and~\ref{lem:3-multi} below. 

We first express the coboundary polynomial of any deformation of the braid arrangement as a weighted count of graphs. We denote by $\mG_n$ the set of graphs (without loops nor multiple edges) with vertex set $[n]$.

\begin{lemma}\label{lem:1-multi}
Let $n\in \NN$, and let $\SS=(S_{u,v})_{1\leq u<v\leq n}$ be an ${n\choose 2}$-tuple of finite sets of integers, and let $m=\max(|s|,~s\in \cup S_{u,v})$.
The coboundary polynomial of $\mA_{\SS}$ is 
\begin{equation*}
P_{\mA_\SS}(q,y)=\sum_{G\in\mG_n}(y-1)^{\ee(G)}q^{\comp(G)}|\mW_\SS(G)|,
\end{equation*}
where $\ee(G)$ and $\comp(G)$ are the number of edges and connected components of $G$ respectively, and 
 $\mW_\SS(G)$ is the set of tuples $(x_1,\ldots,x_n)\in\ZZ^n$ such that 
\begin{compactitem}
\item for all edge $\{u,v\}$ of $G$ with $u<v$, $x_u-x_v$ is in $S_{u,v}$,
\item for all vertex $v$ of $G$ such that $v$ is smallest in its connected component, $x_v=0$.
\end{compactitem}
\end{lemma}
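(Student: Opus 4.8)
The plan is to reorganize the defining sum of the coboundary polynomial according to the combinatorial type of each subarrangement. First I would observe that the hyperplanes of $\mA_\SS$ are indexed bijectively by triples $(u,v,s)$ with $u<v$ and $s\in S_{u,v}$, so a subset $\mB\subseteq\mA_\SS$ is the same data as a choice of such triples. To any $\mB$ I associate the graph $G\in\mG_n$ whose edges are the pairs $\{u,v\}$ for which $\mB$ contains some $H_{u,v,s}$. The key first observation is that if $\cap_{H\in\mB}H\neq\emptyset$ then $\mB$ uses each pair $\{u,v\}$ at most once: two hyperplanes $H_{u,v,s}$ and $H_{u,v,s'}$ with $s\neq s'$ impose the contradictory conditions $x_u-x_v=s$ and $x_u-x_v=s'$. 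Hence a subset with nonempty intersection is precisely the data of a graph $G$ together with a labeling $s\colon E(G)\to\ZZ$ with $s(\{u,v\})\in S_{u,v}$ (for $u<v$) whose associated linear system is consistent, and for such $\mB$ one has $|\mB|=\ee(G)$.

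Next I would compute $\dim(\cap_{H\in\mB}H)$. The intersection is the affine solution space of the system $\{x_u-x_v=s(\{u,v\})\}_{\{u,v\}\in E(G)}$; its underlying homogeneous system has a solution space of dimension $\comp(G)$, since a solution of $x_u=x_v$ along every edge is exactly an assignment constant on each connected component of $G$ (equivalently, the coefficient matrix is the oriented incidence matrix of $G$, of rank $n-\comp(G)$). When the affine system is consistent its solution space is a translate of this, hence of the same dimension $\comp(G)$. Substituting these two evaluations into the definition of $P_{\mA_\SS}(q,y)$ and grouping the terms by $G$ yields
\begin{equation*}
P_{\mA_\SS}(q,y)=\sum_{G\in\mG_n}(y-1)^{\ee(G)}q^{\comp(G)}\,N(G),
\end{equation*}
where $N(G)$ is the number of consistent labelings $s\colon E(G)\to\ZZ$ with $s(\{u,v\})\in S_{u,v}$.

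It then remains to exhibit a bijection between these consistent labelings and $\mW_\SS(G)$, which I regard as the delicate point. In one direction a point $(x_1,\ldots,x_n)\in\mW_\SS(G)$ defines the labeling $s(\{u,v\})=x_u-x_v$ (for $u<v$); this lies in $S_{u,v}$ by definition of $\mW_\SS(G)$ and is automatically consistent since it comes from an actual point. In the other direction, a consistent labeling determines, on each connected component of $G$, all coordinate differences, so normalizing $x_v=0$ at the smallest vertex $v$ of each component and propagating the labels along a spanning tree produces a unique tuple; here one must check that integrality is preserved (it is, since the labels are integers) and that the propagated value is independent of the chosen path (it is, by consistency). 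These two maps are visibly mutually inverse, composing to the identity labeling in one order and to the same normalized tuple in the other, so $N(G)=|\mW_\SS(G)|$, completing the proof. The hard part will be precisely this last verification: consistency of the linear system must be matched with the independence-of-path needed to define the tuple, and the normalization at each per-component minimum must be seen to select exactly one integer representative of every consistent labeling.
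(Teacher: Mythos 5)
Your proposal is correct and follows essentially the same route as the paper: group the central subarrangements by their underlying graph $G$, note that centrality forces at most one hyperplane per pair $\{u,v\}$ and that $\dim(\cap_{H\in\mB}H)=\comp(G)$, and then identify the central subarrangements over a fixed $G$ with the normalized integer points of $\mW_\SS(G)$. The only cosmetic difference is that you pass through ``consistent labelings'' as an intermediate object before bijecting with $\mW_\SS(G)$, whereas the paper directly takes the unique point $\xx_\mB\in\cap_{H\in\mB}H$ with zero coordinates at the component minima; the step you flag as delicate is exactly the telescoping-sum observation the paper uses and is routine.
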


\begin{example} Let $n=3$ and $S_{1,2}=S_{1,3}=\{-2,1\}$ and $S_{2,3}=\{-2,-1,1\}$. Let $G$ be the graph with vertex set $[3]$ and edges $\{1,2\}$ and $\{2,3\}$. Then\\ $\ds \mW_\SS(G)=\{(0,-1,-2), (0,-1,0), (0,-1,1), (0,2,1), (0,2,3), (0,2,4)\}$.
\end{example}

\begin{proof}
To a subarrangement $\mB\subseteq \mA_\SS$, we associate the graph $G_\mB\in \mG_n$ with arcs $\{u,v\}$ for all $u<v$ such that there exists $s\in S_{u,v}$ such that $H_{a,b,s}\in\mB$. We say that $\mB$ is \emph{central} if $\cap_{H\in\mB}H\neq \emptyset$. 
If $\mB$ is central, for each edge $\{u,v\}$ of $G_\mB$, with $u<v$, there is a unique value $s\in S_{u,v}$ such that $H_{u,v,s}\in\mB$, and we denote this value $\mB(u,v)$. We also denote $\mB(v,u)=-\mB(u,v)$. Clearly, a point $(x_1,\ldots,x_n)$ is in $\bigcap_{H\in \mB}H$ if and only if for any path $v_1,v_2,\ldots,v_k$ in $G_\mB$,
$$\ds x_{v_1}-x_{v_k}=\sum_{i=1}^{k-1}\mB(v_i,v_{i+1}).$$ 
Hence, there is a unique point $\xx_\mB=(x_1,\ldots,x_n)$ in $\bigcap_{H\in \mB}H$ such that $x_v=0$
for all $v\in[n]$ such that $v$ is the smallest vertex in its connected component of $G_\mB$. Moreover, $\dim(\bigcap_{H\in \mB}H)=\comp(G_{\mB})$.
Note also that $\xx_\mB$ is in $\mW_\SS(G_{\mB})$, and that $\mB$ is uniquely determined by the pair $(G_\mB,\xx_\mB)$. Lastly, any pair $(G,\xx)$ where $G\in \mG_n$ and $\xx\in \mW_\SS(G)$ comes from a central subarrangement $\mB$. Thus,
\begin{eqnarray*}
P_{\mA_\SS}(q,y)&=&\sum_{\mB\subseteq \mA_\SS\textrm{ central}}(y-1)^{|\mB|}q^{\dim\left(\bigcap_{H\in \mB}H\right)}\\
&=&\sum_{(G,\xx),~ G\in \mG_n,~\xx\in \mW_\SS(G)}(y-1)^{\ee(G)}q^{\comp(G)}\\
&=&\sum_{G\in\mG_n}(y-1)^{\ee(G)}q^{\comp(G)}|\mW_\SS(G)|.
\end{eqnarray*}
\end{proof}

Our second step relates the generating function $P_\hSS(q,y,\tt)$ of coboundary polynomials, to a generating function $Z_{\hSS}(\de,y,\tt)$ of tuples of integers. We fix $N>0$ and an ${N+1\choose 2}$-tuple $\hSS=(S_{a,b})_{1\leq a<b\leq N}$ of finite sets of integers. As before, $m=\max(|s|,~s\in \cup S_{a,b})$ and for $\nn\in \NN^N$ and $u,v\in V(\nn)$ with $u=(a,i)$, $v=(b,j)$ and $u<v$, we denote
$S_{u,v}=S_{a,b}$, $S_{v,u}=S_{a,b}$, $S_{u,v}^-=\{s\geq 0~|~-s\in S_{a,b}\}$, and $S_{v,u}^-=\{s> 0~|~s\in S_{a,b}\}\cup\{0\}$.

For a positive integer $\de$, and $\nn\in \NN^N$, we denote
$$Z_{\hSS,\nn}(\de,y)=\sum_{\xx=(x_{v})_{v\in V(\nn)}\in [\de]^{|\nn|}}y^{\energy_\hSS(\xx)},$$
where $\energy_\hSS(\xx)$ is number of pairs $(u,v)\in V(\nn)^2$, with $u<v$, such that $x_{u}-x_{v}\in S_{u,v}$.
For instance, the $\hSS$-energy of the tuple $\xx$ in Figure~\ref{fig:particle-config-multi} is 1.
By convention, we set $Z_{\hSS,(0,\ldots,0)}(\de,y)=1$.

\fig{width=.7\linewidth}{particle-config-multi}{Let $\de=22$, $N=2$, and $\nn=(6,4)$. In the figure, the tuple $\xx=(x_{1,1},\ldots,x_{1,6},x_{2,1},\ldots,x_{2,4})=(4,13,19,13,15,3,15,21,7,12)$ is represented by indicating a ``round-particle'' labeled $i$ in position $x_{1,i}$ for all $i\in[6]$, and a ``square-particle'' labeled $i$ in position $x_{2,i}$ for all $i\in[4]$. For $\hSS=(S_{a,b})_{1\leq a\leq b\leq N}$ with $S_{1,1}=S_{2,2}=\{-1,2\}$ and $S_{1,2}=\{-1,0,2\}$, the $\hSS$-energy is $\energy_\hSS(\xx)=1$, given by the pair $\{(1,5),(2,1)\}$ (because $x_{1,5}-x_{2,1}=0\in S_{(1,5),(2,1)}=\{-1,0,2\}$). The tuple $\xx$ has four runs $\rho_1,\ldots,\rho_4$.}

\begin{lemma}\label{lem:2-multi}
The generating functions $P_\hSS(q,y,\tt)$ 
and 
$$\ds Z_{\hSS}(\de,y,\tt)=\sum_{\nn\in \NN^N} Z_{\hSS,\nn}(\de,y) \frac{\tt^{\nn}}{\nn!},$$
are related by
\begin{equation}\label{eq:R-Vs-Z-multi}
\frac{1}{q}\log(P_\hSS(q,y,\tt))=\lim_{\de\to \infty} \frac{1}{\de}\log(Z_{\hSS}(\de,y,\tt)).
\end{equation}
Equation~\eqref{eq:R-Vs-Z-multi} is to be understood as an identity for formal power series in $t_1,\ldots,t_N$: 
\begin{itemize}
\item the limit is taken coefficient by coefficient in $t_1,\ldots,t_N$, 
\item for a series in formal power series $A(t_1,\ldots,t_N)$ such that $A(0,\ldots,0)=1$, we denote by $\log(A(\tt))$ the formal power series $\sum_{n=1}^\infty\frac{(-1)^{n-1}(A(\tt)-1)^n}{n}$.
\end{itemize}
\end{lemma}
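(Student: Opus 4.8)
The plan is to express both sides of~\eqref{eq:R-Vs-Z-multi} in terms of a single object: the exponential generating function of \emph{connected} graphs on $V(\nn)$, weighted by their number of edges and by the number of integer solutions of the associated system of difference constraints. After taking logarithms, the factors $\frac1q$ and $\frac1\de$ will each strip off exactly this connected part, and a simple asymptotic comparison of the two connected weights will finish the proof.

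First I would expand $Z_{\hSS,\nn}(\de,y)$ over graphs, exactly as in the Potts-to-Tutte identity. Writing $\energy_\hSS(\xx)=\sum_{u<v}\mathbf 1[x_u-x_v\in S_{u,v}]$ and factoring $y^{\energy_\hSS(\xx)}=\prod_{u<v}\bigl(1+(y-1)\mathbf 1[x_u-x_v\in S_{u,v}]\bigr)$, then expanding over pairs and summing over $\xx\in[\de]^{|\nn|}$, one gets
\[
Z_{\hSS,\nn}(\de,y)=\sum_{G}(y-1)^{\ee(G)}\,M_\SS(G,\de),
\]
where $G$ ranges over graphs with vertex set $V(\nn)$ and $M_\SS(G,\de)$ is the number of $\xx\in[\de]^{|\nn|}$ with $x_u-x_v\in S_{u,v}$ for every edge $\{u,v\}$ of $G$, $u<v$. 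Since these constraints only couple vertices in a common connected component, $M_\SS(G,\de)=\prod_{C}c_\SS(G|_C,\de)$, where $c_\SS(G',\de)$ counts the solutions of a connected graph $G'$; hence the full weight $(y-1)^{\ee(G)}M_\SS(G,\de)$ is multiplicative over the connected components of $G$.

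Next I would apply the exponential formula for labelled classes, in its multivariate ($t_1,\dots,t_N$) form. As a graph is a set of connected graphs carried by a partition of $V(\nn)$ and the weight factors over components,
\[
\log Z_\hSS(\de,y,\tt)=\sum_{\nn\neq 0}\frac{\tt^\nn}{\nn!}\sum_{G'\text{ connected on }V(\nn)}(y-1)^{\ee(G')}\,c_\SS(G',\de).
\]
Applying the same formula to the identity of Lemma~\ref{lem:1-multi}, and using that $|\mW_\SS(G)|$ and $q^{\comp(G)}=\prod_C q$ both factor over components while $\comp(G')=1$ for connected $G'$, gives
\[
\tfrac1q\log P_\hSS(q,y,\tt)=\sum_{\nn\neq 0}\frac{\tt^\nn}{\nn!}\sum_{G'\text{ connected on }V(\nn)}(y-1)^{\ee(G')}\,|\mW_\SS(G')| .
\]
In particular the left-hand side of~\eqref{eq:R-Vs-Z-multi} is manifestly independent of $q$, and only the connected weights $c_\SS(G',\de)$ and $|\mW_\SS(G')|$ remain to be compared.

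The crux, and the step I expect to be the main obstacle, is the asymptotic estimate $c_\SS(G',\de)=\de\,|\mW_\SS(G')|+O(1)$ for each fixed connected $G'$ on a vertex set $W$. Connectivity confines solutions to a bounded window: fixing the value at the least-labelled vertex $r$ and using a spanning tree together with $S_{u,v}\subseteq[-m..m]$ gives $|x_v-x_r|\le m(|W|-1)$ for all $v$, so there are finitely many solutions up to translation, parametrized precisely by $\mW_\SS(G')$ via the normalization $x_r=0$. For $\de$ large, each translation class of width $w$ contributes $\de-w$ positions fitting inside $[\de]$, so $c_\SS(G',\de)$ is affine in $\de$ with leading coefficient $|\mW_\SS(G')|$ and bounded constant term. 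Dividing by $\de$ and passing to the limit coefficientwise in $\tt$---valid because each fixed $\nn$ involves only finitely many connected graphs on $V(\nn)$---turns the formula for $\log Z_\hSS$ into the one for $\frac1q\log P_\hSS$, which is~\eqref{eq:R-Vs-Z-multi}. The two invocations of the exponential formula are routine once multiplicativity over components is established; the real work is the uniform $O(1)$ bound on the boundary correction and the justification of the termwise limit, both of which rely on the finiteness forced by connectivity.
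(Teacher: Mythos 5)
Your proposal is correct and follows essentially the same route as the paper: expand $Z_{\hSS,\nn}(\de,y)$ over graphs via $y^{\energy}=\prod(1+(y-1)\one)$, combine with Lemma~\ref{lem:1-multi}, apply the multivariate exponential formula to both series to isolate connected graphs, and conclude with the translation argument giving $\lim_{\de\to\infty}\frac{1}{\de}|\mW_{\hSS,\de}(G)|=|\mW_\hSS(G)|$ for connected $G$. No substantive differences from the paper's proof.
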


\begin{proof}
Let $\mG_\nn$ be the set of graphs with vertex set $V(\nn)$, and let $\mG^{(N)}=\bigcup_{\nn\in\NN^N}\mG_\nn$. 
For $G\in \mG_\nn$, we denote by $\mW_\hSS(G)$ the set of tuples $(x_{v})_{v\in V(\nn)}\in\ZZ^{|\nn|}$ such that 
\begin{compactitem}
\item for all edge $\{u,v\}$ of $G$ with $u<v$, $x_{u}-x_{v}\in S_{u,v}$,
\item for all vertex $v$ of $G$ which is smallest in its connected component, $x_{v}=0$.
\end{compactitem}
Recall that the arrangement $\mA_\hSS(\nn)$ identifies with the arrangement $\mA_{\hSS(\nn)}$, where the tuple $\hSS(\nn)$ is given by~\eqref{eq:SSnn}. Up to this identification, Lemma~\ref{lem:1-multi} gives
$$P_{\mA_\hSS(\nn)}(q,y)=\sum_{G\in\mG_\nn}(y-1)^{\ee(G)}q^{\comp(G)}|\mW_\hSS(G)|,$$
hence
\begin{equation}\label{eq:PSS}
P_\hSS(q,y,\tt)=\sum_{\nn\in\NN^N}\frac{\tt^{\nn}}{\nn!}\sum_{G\in\mG_\nn}(y-1)^{\ee(G)}q^{\comp(G)}|\mW_\hSS(G)|.
\end{equation}
We now apply the multivariate exponential formula. We think of $\mG^{(N)}$ as the combinatorial class of graphs with $N$ types of vertices, with the vertices of each type being \emph{well-labeled} (that is, the $n_a$ vertices of type $a$ have distinct labels in $[n_a]$). The \emph{size} of $G\in \mG_\nn$ is $\nn=(n_1,\ldots,n_N)$ and the \emph{weight} of $G$ is $(y-1)^{\ee(G)}q^{\comp(G)}|\mW_\hSS(G)|$. The weight is multiplicative over connected components (and unchanged by order preserving relabeling of the vertices of each type). Hence the multivariate exponential formula applies (see e.g.~\cite{Stanley:volume2}), and taking the logarithm of $P_\hSS(q,y,\tt)$ amounts to selecting the connected graphs in $\mG^{(N)}$. This gives,
\begin{equation}\label{eq:logPSS}
\frac{1}{q}\log(P_\hSS(q,y,\tt))=\sum_{\nn\in\NN^N}\frac{\tt^{\nn}}{\nn!}\sum_{G\in\mG_\nn\textrm{ connected}}(y-1)^{\ee(G)}|\mW_\hSS(G)|.
\end{equation}

Next, observe that
\begin{eqnarray*}
|\mZ_{\hSS,\nn}(\de,y)|&=&
\sum_{(x_{v})_{v\in V(\nn)}\in [\de]^{|\nn|}}~\prod_{u,v\in V(\nn),~u<v}\left(1+(y-1)\cdot \one_{x_{u}-x_{v}\in S_{u,v}}\right),\\
&=&\sum_{(x_{v})_{v\in V(\nn)}\in [\de]^{|\nn|}}~\sum_{G\in \mG_\nn}(y-1)^{\ee(G)}\left(\prod_{\{u,v\} \textrm{ edge of }G,~u<v}\one_{x_{u}-x_{v}\in S_{u,v}}\right),\\
&=& \sum_{G\in \mG_\nn}(y-1)^{\ee(G)}|\mW_{\hSS,\de}(G)|,
\end{eqnarray*}
where $\one$ is the indicator function, and $\mW_{\hSS,\de}(G)$ is the set of tuples $(x_{v})_{v\in V(\nn)}\in [\de]^{|\nn|}$ such that for all edge $\{u,v\}$ of $G$ with $u<v$, $x_{u}-x_{v}\in S_{u,v}$.
The graph weight $(y-1)^{\ee(G)}|\mW_{\hSS,\de}(G)|$ is multiplicative over connected components, hence by the multivariate exponential formula, 
\begin{eqnarray}
\log(Z_{\hSS,\de}(\tt))&=& \log\left(\sum_{\nn\in \NN^N} \frac{\tt^{\nn}}{\nn!}\sum_{G\in \mG_\nn}(y-1)^{\ee(G)}|\mW_{\hSS,\de}(G)|\right),\nonumber\\
&=&\sum_{\nn\in \NN^N}\frac{\tt^{\nn}}{\nn!}\sum_{G\in\mG_\nn, \textrm{ connected}}(y-1)^{\ee(G)}|\mW_{\hSS,\de}(G)|.\label{eq:ZSS}
\end{eqnarray}
It only remains to prove that for any connected graph $G\in \mG_\nn$,
\begin{equation}\label{eq:lim-WSd}
\lim_{\de\to\infty}\frac{1}{\de}|\mW_{\hSS,\de}(G)|=|\mW_{\hSS}(G)|.
\end{equation}
It is easy to see that, the tuples in $\mW_{\hSS,\de}(G)$ are translations of tuples in $\mW_\hSS(G)$, and the number of translations is of order $\de$. More precisely, 
$$\mW_{\hSS,\de}(G)=\{(x_v+\theta)_{v\in V(\nn)}~|~(x_v)_{v\in V(\nn)}\in W_\hSS(G),\textrm{ and }1-\min(x_v)_{v\in V(\nn)}\leq\theta\leq \de-\max(x_v)_{v\in V(\nn)}\}.$$
The tuples above are all distinct, and for any $(x_v)_{v\in V(\nn)}\in W_\hSS(G)$, $\max(x_v)_{v\in V(\nn)}-\min(x_v)_{v\in V(\nn)}\leq m \cdot |\nn|$. Thus 
 $$\left(\de-m\cdot|\nn|\right)|\mW_{\hSS}(G)|\leq |\mW_{\hSS,\de}(G)|\leq \de\,|\mW_{\hSS}(G)|.$$ 
This shows~\eqref{eq:lim-WSd}, and completes the proof.
\end{proof}

\begin{remark} The proof of Lemma~\ref{lem:2-multi} is reminiscent of Mayers' theory of cluster integrals (see e.g.~\cite{OB:Mayers-theory,Leroux:mayers-theory1,Mayer:theory}). In this perspective, the right-hand side of~\eqref{eq:R-Vs-Z-multi} corresponds to the pressure of the infinite volume limit of a discrete gas model (where particles of type $a$ and $b$ interacts according to a soft-core potential of shape $S_{a,b}$, and energy of interaction $y$). Alexander Postnikov also pointed out to the author that Lemma~\ref{lem:2-multi} could alternatively be obtained by using the finite field method pioneered by Athanasiadis~\cite{Athanasiadis:finite-field-method} and adapted to the calculation of coboundary polynomials in~\cite{Ardila:Tutte-hyperplanes}. However, the situation of deformed braid arrangement is distinguished by the fact that the parameter $q$ appears merely as an exponent of the generating function $P_\hSS(q,y,\tt)$; see \eqref{eq:q-is-exponent}. This fact, which is a direct consequence of Lemma~\ref{lem:1-multi} and already appears in~\cite[Theorem 1.2]{Stanley:hyperplane-interval-orders-overview}, allows one to focus the remaining analysis on a single value of $q$, namely $+\infty$.
\end{remark}

Our last step, relates the generating function $Z_{\hSS}(\de,y,\tt)$ of point configurations to the generating function of boxed trees.

\begin{lemma}\label{lem:3-multi} 
Let
$$U_\hSS(y,\tt)=\sum_{\nn\in\NN^N}\frac{\tt^\nn}{\nn!}\sum_{(T,B)\in \mU^{(m)}(\nn)}(-1)^{|B|}y^{\energy_\hSS(T,B)},$$ 
and 
$$U^\bu_\hSS(y,\tt)=\sum_{\nn\in\NN^N}\frac{\tt^\nn}{\nn!}\sum_{(T,B)\in \mU_\SS^{(m)}(\nn)}(|B|+\leaf(T))(-1)^{|B|}y^{\energy_\hSS(T,B)},$$ 
where $\leaf(T)$ is the number leaves of $T$.
For all $\nn\in\NN^N$, and for all $\de>m\cdot |\nn|$, 
\begin{equation} \label{eq:Zcoeff-as-boxed-multi}
Z_{\hSS,\nn}(\de,y)=[\tt^{\nn}]U_\hSS(y,\tt)^{-\de-m-2}U_\hSS^\bu(y,\tt).
\end{equation}
\end{lemma}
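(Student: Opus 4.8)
The plan is to reduce the statement to a clean power-series identity and then prove that identity by decomposing a point configuration along the line into its \emph{runs}, translating each run into the tree picture, and assembling the runs by a transfer argument.

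First I would record the reformulation. Since $U_\hSS(y,\mathbf{0})=1$, the series $U_\hSS(y,\tt)^{-\de-m-2}$ is a well-defined element of $\QQ[y][[t_1,\ldots,t_N]]$. Comparing the coefficient of $\tt^\nn/\nn!$ on both sides, the claim \eqref{eq:Zcoeff-as-boxed-multi} is equivalent to the power-series identity $Z_\hSS(\de,y,\tt)=U_\hSS(y,\tt)^{-\de-m-2}U_\hSS^\bu(y,\tt)$, where $Z_\hSS(\de,y,\tt)$ is the generating function of Lemma~\ref{lem:2-multi}; this identity is to be read coefficientwise for those $\nn$ with $\de>m\,|\nn|$. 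So the goal becomes this identity.

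Next, the run decomposition. Because every $s\in S_{u,v}$ satisfies $|s|\le m$, two particles contribute to $\energy_\hSS(\xx)$ only if their positions differ by at most $m$. I would therefore partition the occupied positions of $\xx$ into \emph{runs} — maximal sets of occupied positions in which consecutive occupied positions differ by at most $m$ — exactly as illustrated in Figure~\ref{fig:particle-config-multi}. Particles in distinct runs never interact, so $\energy_\hSS(\xx)$ is additive over runs, and $Z_{\hSS,\nn}(\de,y)$ factors as a weighted count of (placements of runs on $[\de]$) times (internal structures of runs). The internal structure of a single run is precisely the data of an admissible $(m,N)$-configuration in the sense of Definition~\ref{def:S-config-multi} and Remark~\ref{rk:cadet-seq=config}: the successive left-sibling gaps $d_i$ record the differences between consecutive occupied positions, the energy statistics coincide, and the footprint of the run is governed by $\wid$. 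This is the step that converts the particle picture into the boxed-tree picture.

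It then remains to assemble the runs along $[\de]$. Placing runs with the required empty buffers is a linear (sequence-type) construction, so the generating function acquires a transfer-operator form in which each of the $\de$ positions contributes one factor; equivalently one obtains the peeling recurrence $Z_\hSS(\de-1,y,\tt)=U_\hSS(y,\tt)\,Z_\hSS(\de,y,\tt)$, valid coefficientwise once $\de>m\,|\nn|$. The key algebraic input is the functional equation $U_\hSS=1-\Gamma_\hSS(U_\hSS,y,\tt)$, which follows from the purely combinatorial root-box deletion of boxed trees (the same decomposition underlying \eqref{eq:GF-P-multi}, and independent of the present lemma): it identifies the per-position transfer factor with $U_\hSS^{-1}$ and thereby produces the power $U_\hSS^{-\de}$. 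The conceptual heart is that, although $Z_\hSS$ is a manifestly unsigned series, the signs $(-1)^{|B|}$ carried by $U_\hSS$ are exactly the inclusion–exclusion needed to invert the run-assembly, and the functional equation is what makes this bookkeeping close.

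The remaining, and hardest, point is the boundary factor $U_\hSS^{-m-2}U_\hSS^\bu$. The bulk of the line is accounted for by iterating the per-position factor, but the two ends each carry an interaction window of width $m$ together with the two extreme empty slots, and one must break the translation symmetry of the configuration (recall from the proof of Lemma~\ref{lem:2-multi} that the relevant configurations are rigid only up to translation). I would show that the needed ``anchoring'' contributes the pointed series $U_\hSS^\bu$, whose weight $|B|+\leaf(T)$ counts exactly the available box-or-leaf anchor slots, while the two buffers of total width $m+2$ contribute $U_\hSS^{-m-2}$. I expect the main obstacle to be precisely this boundary analysis: pinning down the shift $m+2$ and the pointing weight $|B|+\leaf(T)$ on the nose, and checking that the hypothesis $\de>m\,|\nn|$ forces the two boundary windows to be disjoint so that no run straddles both ends — this disjointness is what upgrades the mere leading-order behaviour supplied by Lemma~\ref{lem:2-multi} into the exact product $U_\hSS^{-\de-m-2}U_\hSS^\bu$.
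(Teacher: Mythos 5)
Your opening moves match the paper: the reduction to the generating-function identity and the decomposition of $\xx$ into runs, each run recording an $(m,N)$-configuration via Remark~\ref{rk:cadet-seq=config}, are exactly how the paper starts, and they lead to the explicit formula \eqref{eq:cardinalityZ} in which the $r$ runs are placed in ${\de+m+r-\wid(\ga_1)-\cdots-\wid(\ga_r)\choose r}$ ways. The gap is everything after that. Your proposed ``peeling recurrence'' $Z_\hSS(\de-1,y,\tt)=U_\hSS(y,\tt)\,Z_\hSS(\de,y,\tt)$ is asserted, not derived: the functional equation $U_\hSS=1-\Gamma_\hSS(U_\hSS,y,\tt)$ describes root-box deletion of boxed trees and does not by itself produce a per-position transfer factor for the particle model (the signs $(-1)^{|B|}$ would have to be matched against an inclusion--exclusion on the line that you never set up). Even granting the recurrence, it only propagates the identity between consecutive admissible values of $\de$; since $Z_{\hSS,\nn}(\de,y)$ has no direct meaning for small or negative $\de$, you still need a base case, which is missing. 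And you explicitly defer the boundary analysis that is supposed to produce the exponent shift $-m-2$ and the pointed factor $U_\hSS^\bu$ --- but that is precisely the content of the lemma.

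The paper closes this gap by a different mechanism, which your sketch does not contain. Both sides of \eqref{eq:Zcoeff-as-boxed-multi} are polynomials in $\de$ (replace the binomial coefficient by $\pol_r$), so it suffices to verify the identity at the negative integers $\de=-m-1-\al$. There $\pol_r$ becomes $(-1)^r{\al+\wid(\ga_1)+\cdots+\wid(\ga_r)\choose r}$, and Claim~\ref{claim:tau} (proved with the cycle lemma) interprets this binomial coefficient as counting tuples of $\al$ rooted plane trees, the last one vertex-marked, whose node degrees are the widths $\wid(\ga_i)$ in order. On the other side, contracting each box of a boxed tree to a node exhibits $U_\hSS$ as a sum over plane trees whose nodes carry configurations, and identifies $|B|+\leaf(T)$ as the total number of vertices of the contracted tree, which is what makes $U_\hSS^\bu$ the vertex-pointed version. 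Matching these two counts gives $\widetilde{Z}_\hSS=U_\hSS^{\al-1}U_\hSS^\bu=U_\hSS^{-\de-m-2}U_\hSS^\bu$ for infinitely many $\de$, hence identically. The polynomial extrapolation, the cycle-lemma count, and the box-contraction reading of $U_\hSS$ and $U_\hSS^\bu$ are the three ingredients your proposal would need to supply; as written, it stops where the real work begins.
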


We will use a counting result about tuples of plane trees. We recall that the \emph{prefix order} of the vertices of a rooted plane tree is the total order for which any vertex $v$ is less than its children and all the descendants of $v$ are less than the right siblings of $v$.

\begin{claim}\label{claim:tau} 
Let $\al,w_1\ldots,w_r$ be positive integers. Let $\tau(\al;w_1,\ldots,w_r)$ be the set of tuples $(T_1,\ldots,T_\al)$, where $T_1,\ldots,T_\al$ are rooted plane trees, and $T_\al$ has a marked vertex, such that, denoting $c_{i,1},\ldots, c_{i,k_i}$ the number of children of the nodes of $T_i$ in prefix order, one has 
$$\ds (c_{1,1},\ldots, c_{1,k_1},c_{2,1},\ldots, c_{2,k_2},\ldots, c_{\al,1},\ldots, c_{\al,k_\al})=(w_1,\ldots,w_r).$$ 
Then,
$$|\tau(\al;w_1,\ldots,w_r)|={\al+w_1+\cdots+w_r\choose r}.$$
\end{claim}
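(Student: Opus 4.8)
The plan is to exhibit a bijection between $\tau(\alpha;w_1,\ldots,w_r)$ and the family of $r$-element subsets of $\{1,\ldots,M\}$, where $M=\alpha+w_1+\cdots+w_r$; since there are $\binom{M}{r}$ such subsets, this yields the formula. The engine will be the cycle lemma of Dvoretzky and Motzkin, with the marked vertex of $T_\alpha$ supplying exactly the cut point that the lemma needs. First I would encode a tuple $(T_1,\ldots,T_\alpha)$ as a word: reading all $M$ vertices of the forest in the concatenated prefix order and recording each vertex's number of children produces a word $(a_1,\ldots,a_M)$ whose $r$ nonzero letters are $w_1,\ldots,w_r$ in order, the remaining $M-r$ zeros recording the leaves. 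Here $M=\alpha+\sum_i w_i$ because the total number of edges is $\sum_i w_i$, while a forest of $\alpha$ trees on $M$ vertices has $M-\alpha$ edges. The words that arise are precisely the \emph{valid $\alpha$-forest words}: those with $\sum_{i=1}^M(a_i-1)=-\alpha$ all of whose proper prefix sums exceed $-\alpha$ (the ballot/first-passage condition).

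The key steps are then the following. (1) Confirm the encoding above and the ballot characterization of valid forest words, so that a tuple is the same data as a valid word. (2) Let the marked vertex distinguish a position $\mu$, and read the degree word cyclically starting just after $\mu$; I would show that the assignment $(\,(T_1,\ldots,T_\alpha),\text{mark}\,)\mapsto$ (the cyclic word cut at $\mu$) is a bijection onto the set of all placements of the $r$ nonzero letters among the $M$ cyclic positions. This is where the cycle lemma does the work: each cyclic class of words with $r$ nonzero and $M-r$ zero letters contains exactly $\alpha$ valid-forest rotations, and recording the cut position converts ``one valid rotation out of the class'' into ``every placement of the $r$ letters is attained exactly once,'' the marking providing precisely the rotational freedom that removes the ballot constraint. (3) Count the placements as $\binom{M}{r}$.

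The hard part will be step (2): one must verify that the condition \emph{the marked vertex lies in $T_\alpha$} matches exactly the rotation selected by the cycle lemma, and that the prefix order of the letters $w_1,\ldots,w_r$ is correctly recovered after cutting; the degenerate cases (a single-vertex last tree, and $\alpha=1$, where the cut may land on the root) will need separate attention. As a cross-check and an alternative route, I would attempt an induction: peeling off either a single-vertex first tree, or the root of $T_1$ together with the $w_1$ subtrees it exposes (which then join the forest), should give $|\tau(\alpha;w_1,\ldots,w_r)|=|\tau(\alpha-1;w_1,\ldots,w_r)|+|\tau(\alpha+w_1-1;w_2,\ldots,w_r)|$, which is exactly Pascal's relation $\binom{M}{r}=\binom{M-1}{r}+\binom{M-1}{r-1}$ after substituting the formula; the same careful bookkeeping of the marked-vertex/last-tree convention under peeling is the delicate point in that approach too.
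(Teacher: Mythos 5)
Your step (2) is where the argument breaks, and the failure is not the routine verification you flag but a genuine obstruction: cutting the degree word cyclically just after the marked position does \emph{not} recover the letters $w_1,\ldots,w_r$ in their original linear order --- it produces a cyclic shift of that order --- so the image of your map is not contained in the set of $\binom{M}{r}$ words you intend to count. In fact no bijection of the proposed kind can exist, because the quantity $|\tau(\al;w_1,\ldots,w_r)|$ is sensitive to the order of the $w_i$ while $\binom{M}{r}$ is not. Take $\al=1$ and $(w_1,w_2)=(1,2)$, so $M=4$ and $\binom{M}{r}=6$. The only rooted plane tree whose nodes have children counts $(1,2)$ in prefix order is the one whose root has a single child carrying two leaves; it has $4$ vertices, so $|\tau(1;1,2)|=4\neq 6$ (while $|\tau(1;2,1)|=8$, coming from two trees with $4$ vertices each). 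Your inductive cross-check detects the same problem: peeling would give $|\tau(1;1,2)|=|\tau(0;1,2)|+|\tau(1;2)|=0+3=3$, agreeing with neither $4$ nor $6$; the marked vertex of $T_\al$ need not land in the last subtree exposed when the root of $T_1$ is peeled, so the peeling does not respect the marked-vertex/last-tree convention.

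For comparison, the paper does not rotate globally: it decomposes the lattice path at its first passages to $-1,\ldots,-\al+1$, keeps the first $\al-1$ pieces as honest tree words (so their letters stay in order), and applies the cycle lemma only to the last piece. This avoids scrambling letters across tree boundaries, but the rotation of the last piece still cyclically permutes the letters assigned to $T_\al$, so even that argument really establishes only the order-symmetrized identity $\sum_{\pi\in\mathfrak{S}_r}|\tau(\al;w_{\pi(1)},\ldots,w_{\pi(r)})|=r!\binom{M}{r}$ --- equivalently, the claim with the children counts of the marked tree allowed to be a cyclic shift of their block, the shift being recorded by the mark. That symmetrized statement is all that the application in Lemma~\ref{lem:3-multi} uses, since the weight $\prod_i y^{\energy_\hSS(\ga_i)}\,t^{|\ga_i|}/|\ga_i|!$ appearing there is symmetric in $\ga_1,\ldots,\ga_r$; but neither your global-rotation map nor the literal statement of the claim survives the example above, so step (2) cannot be completed as written.
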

\begin{proof} The proof is represented in Figure~\ref{fig:decompose-path}. Let $\mP$ be the set of lattice paths on $\ZZ$ starting at 0, and having every step greater or equal to $-1$. Let $\mP_{-1}$ be the set of paths in $\mP$ ending at $-1$, and let $\mP_{-1}^{+}\subset \mP_{-1}$ be the subset of paths remaining non-negative until the last step. Recall that the map $\phi$ which associates to a rooted plane tree $T$ the path $P$ with steps $c_1-1,c_2-1,\ldots,c_n-1$ where $c_1,\ldots,c_n$ are the number of children of the vertices of $T$ taken in prefix order is a bijection between rooted plane trees and $\mP_{-1}^+$ (see e.g.~\cite[Chapter 5.3]{Stanley:volume2}). Moreover by the cycle lemma, there is a $n$-to-1 correspondence between the paths with $n$ steps in $\mP_{-1}$ and the paths with $n$ steps in $\mP_{-1}^+$. Thus the map $\phi$ induces a bijection between $\mP_{-1}$ and rooted plane trees with a marked vertex. 

Now let $\mP(\al;w_1,\ldots,w_r)$ be the set of path in $\mP$ having $\al+w_1+\ldots+w_r-r$ steps $-1$, and $r$ non-negative steps $w_1-1,\ldots,w_r-1$ in this order. Clearly, 
$|\mP(\al;w_1,\ldots,w_r)|={\al+w_1+\cdots+w_r\choose r},$
and paths in $\mP(\al;w_1,\ldots,w_r)$ ends at $-\al$. We consider the decomposition of paths in $\mP(\al;w_1,\ldots,w_r)$ at the first time they reach $-1,-2,\ldots,-\al+1$, as represented in Figure~\ref{fig:decompose-path}. This gives a bijection between $\mP(\al;w_1,\ldots,w_r)$ and the set of tuples $(P_1,\ldots,P_{\al})$ such that $P_1,\ldots,P_{\al-1}\in\mP_{-1}^+$, $~P_{\al}\in\mP_{-1}$ and there is a total of $\al+w_1+\ldots+w_r-r$ steps $-1$, and $r$ non-negative steps $w_1,\ldots,w_r$ in this order. Combining this decomposition with $\phi$ gives a bijection between $\mP(\al;w_1,\ldots,w_r)$ and $\tau(\al;w_1,\ldots,w_r)$, thereby proving the claim.
\end{proof}
\fig{width=\linewidth}{decompose-path}{A path $P$ in $\mP(4;2,2,1,3,2,4,1,2,4)$, and its decomposition into four paths $P_1,\ldots,P_4$ with $P_1,P_2,P_3\in\mP_{-1}^+$, and $P_4$ in $\mP_{-1}$. Here the vertical direction corresponds to the value of the path, while the horizontal direction represents time.}

\begin{proof}[Proof of Lemma~\ref{lem:3-multi}]
We will relate both sides of~\eqref{eq:Zcoeff-as-boxed-multi} to the generating function of $(m,N)$-configurations (see Definition~\ref{def:S-config-multi}). Let $(T,B)\in \mU_\SS^{(m)}(\nn)$. We associate to $(T,B)$ a rooted plane tree $R$ obtained by contracting each box into a node. More precisely, if $\be=(v_1,\ldots,v_k)$ is a cadet sequence in $B$, then we delete all the right siblings of the nodes $v_2,\ldots,v_k$ (these are leaves of $T$) and contract the edges $(v_i,v_{i+1})$ of all $i\in[k-1]$. If $\be$ corresponds to the $p$th node of $R$ in prefix order of $R$, we denote $L_p=\{v_1,\ldots,v_k\}$ and we denote by $\ga_p$ the $(m,N)$-configuration corresponding to $\be$ in the sense of Remark~\ref{rk:cadet-seq=config}. 
Clearly, the boxed tree $(T,B)$ is uniquely determined by the triple $(R,(\ga_1,\ldots,\ga_{|B|}),(L_1,\ldots,L_{|B|}))$.
Hence, the boxed trees $(T,B)\in \mU_\SS^{(m)}$ are in bijection with triples $(R,(\ga_1,\ldots,\ga_{n}),(L_1,\ldots,L_{n}))$,
where
\begin{itemize} 
\item $R$ is a rooted plane tree, and $n$ is the number of nodes of $R$,
\item $\ga_1,\ldots,\ga_{n}$ are $(m,N)$-configurations of width $\ch(v_1),\ldots,\ch(v_{n})$ respectively, where $v_1,\ldots,v_n$ are the nodes of $R$ in prefix order, and $\ch(v)$ is the number of children of the node $v$ (including leaves), 
\item and $(L_1,\ldots,L_{n})$ is a set partition of $V(|\ga_1|+\cdots +|\ga_{n}|)$ such that for all $p\in[n]$, $|\ga_p|=(k_{p,1},\ldots,k_{p,N})$, where $\ds k_{p,a}=|\{i~|~(a,i)\in L_p\}|$.
\end{itemize}
Lastly, there are 
$$\frac{\left(|\ga_1|+\cdots +|\ga_{n}|\right)!}{\prod_{p=1}^{n}|\ga_{p}|!}$$
ways to choose the set partition $(L_1,\ldots,L_{n})$. Hence 
\begin{equation*}
U_\SS(y,\tt)=\sum_{R\in \mR}~\prod_{~v\textrm{ node of }R}\left(-\sum_{\ga\in \mC^{(m,N)}~|~ \wid(\ga)=\ch(v)}y^{\energy_\SS(\ga)}\frac{\tt^{|\ga|}}{|\ga|!}\right),
\end{equation*}
where $\mR$ is the set of rooted plane trees and $\mC^{(m,N)}$ is the set of $(m,M)$-configurations. Similarly,
\begin{equation*}
U_\SS^\bu(y,\tt)=\sum_{R\in \mR}\vv(R)\prod_{v\textrm{ node of }R}\left(-\sum_{\ga\in \mC^{(m,N)}~|~ \wid(\ga)=\ch(v)}y^{\energy_\SS(\ga)}\frac{\tt^{|\ga|}}{|\ga|!}\right),
\end{equation*}
where $\vv(R)$ is the number of vertices of $R$.

Next, we express $Z_{\hSS,\nn}(\de,y)$ in terms of $(m,N)$-configurations. 
Let $\nn\in\NN^N$, and let $\xx=(x_{v})_{v\in V(\nn)}\in [\de]^{|\nn|}$.
Intuitively, we think of each coordinate $x_{v}$ as the position of a particle in the space $[\de]$, and we will distinguish \emph{runs} which are groups of particles that are close to one another. This is represented in Figure~\ref{fig:particle-config-multi}. 
Let $v'_1,\ldots,v'_{|\nn|}\in V(\nn)$ be defined by $\{v'_1,\ldots,v'_{|\nn|}\}=V(\nn)$, and the conditions $\ds x_{v'_i}\leq x_{v'_{i+1}}$, and if $x_{v'_i}= x_{v'_{i+1}}$ then $v'_i<v'_{i+1}$. We denote $d_i=x_{v'_{i+1}}-x_{v'_{i}}$ for all $i\in[|\nn|-1]$, and adopt the convention $d_0=d_{|\nn|}=\infty$. 
A \emph{run} of $\xx$ is a subsequence $\rho=(v'_i,v'_{i+1},\ldots,v'_j)$, with $1\leq i\leq j\leq|\nn|$, such that $d_{i-1}>m$, $d_{j}>m$, and for all $i\leq p<j$, $d_{p}\leq m$.
We define the \emph{position} of $\rho$ as $\pos(\rho)=x_{v'_i}$, 
the \emph{width} of $\rho$ as $\wid(\rho)=x_{v'_j}-x_{v'_i}+m+1$, 
the \emph{labels} of $\rho$ as $\lab(\rho)=\{v'_i,\ldots,v'_j\}$, and
the \emph{size} of $\rho$ as $|\rho|=\kk=(k_1,\ldots,k_N)$ where $k_a=|\{i~|~(a,i)\in \lab(\rho)\}|$. 
For instance, the tuple $\xx$ represented in Figure~\ref{fig:particle-config-multi} has four runs having position 3, 7, 12, and 19 respectively, width 4, 3, 6, and 5 respectively, and size $(2,0)$, $(0,1)$, $(3,2)$, and $(1,1)$ respectively.
Lastly, the \emph{configuration} of $\rho$ is 
$\config(\rho)=\left((d_i,d_{i+1},\ldots, d_{j-1}),(u_i,\ldots,u_j)\right)$, 
where $(u_i,\ldots,u_j)$ is the unique order preserving relabeling of $(v'_i,\ldots,v'_j)$ in $V(\kk)$. For instance, in Figure~\ref{fig:particle-config-multi}, $\config(\rho_3)=((1,0,2,0),((2,2),(1,1),(1,2),(1,3),(2,1)))$. 
Note that $\ga=\config(\rho)$ is in $\mC^{(m)}(\kk)$, and $\wid(\rho)=\wid(\ga)$. Moreover it is easy to see that 
$$\ds \energy_\hSS(\xx)=\sum_{i=1}^r\energy_\hSS(\config(\rho_i)),$$ 
where $\rho_1,\ldots,\rho_r$ are the runs of $\xx$ (because pairs of particles at distance greater than $m$ do not contribute to the $\hSS$-energy). 
Moreover, the tuple $\xx$ is completely determined by the positions, labels, and configurations of its runs $\rho_1,\ldots,\rho_r$. 
The configurations of the runs are arbitrary, and given the configurations $\ga_1,\ldots,\ga_r$ of the runs there are 
$${\de +m+r-\wid(\ga_1)-\ldots-\wid(\ga_r)\choose r}$$ 
ways to choose the positions $(p_1,\ldots,p_r)\in[\de]^r$ (since the only constraints are $p_i+\wid(\ga_i)\leq p_{i+1}$ for all $i\in[r-1]$, and $p_r+\wid(\ga_r)\leq \de +m+1$). Also, there are
$\ds \frac{\nn!}{\prod_{i=1}^{r}|\ga_i|!}$
ways to choose the labels. Thus,
\begin{equation}\label{eq:cardinalityZ}
Z_{\hSS,\nn}(\de,y)=\nn!\sum_{r=0}^{\infty}\sum_{\ga_1,\ldots, \ga_r\in \mC^{(m,N)}\atop |\ga_1|+\cdots+|\ga_r|=\nn}{\de +m+r-\wid(\ga_1)-\cdots-\wid(\ga_r)\choose r}\prod_{i=1}^{r}\frac{y^{\energy_{\hSS}(\ga_i)}}{|\ga_{i}|!}.
\end{equation}

In order to prove~\eqref{eq:Zcoeff-as-boxed-multi}, we will now consider negative values of $\de$. Let us denote $\pol_r(x)=\frac{x(x-1)\cdots(x-r+1)}{r!}$. This is a polynomial in $x$, such that for all $x\in \NN$, ${x \choose r}=\pol_r(x)$. Let 
\begin{equation}\label{eq:cardinalityZtilde}
\widetilde{Z}_{\hSS,\nn}(\de,y)=\nn!\sum_{r=0}^{\infty}\sum_{\ga_1,\ldots, \ga_r\in \mC^{(m,N)}\atop |\ga_1|+\cdots+|\ga_r|=\nn}\pol_r\left(\de +m+r-\wid(\ga_1)-\cdots-\wid(\ga_r)\right)\prod_{i=1}^{r}\frac{y^{\energy_{\hSS}(\ga_i)}}{|\ga_{i}|!}.
\end{equation}
This is a polynomial in $\de$ and $y$ which coincides with $Z_{\hSS,\nn}(\de,y)$ for all integer $\de>m\cdot|\nn|$, because any $\ga\in\mC^{(m,N)}$ satisfies $\wid(\ga)-1\leq m\cdot|\ga|$. It remains to prove that for all $\de$,
\begin{equation}\label{eq:Zcoeff-as-boxed-multi2}
\widetilde{Z}_{\hSS,\nn}(\de,y)=[\tt^{\nn}]U_\hSS(y,\tt)^{-\de-m-2}U_\hSS^\bu(y,\tt).
\end{equation}
We observe that both sides of~\eqref{eq:Zcoeff-as-boxed-multi2} are polynomials in~$\de$. 
Indeed, $U_\hSS(y,(0,\ldots,0))=1$, hence the series 
$$U_\hSS(y,\tt)^{-\de}=\exp(-\de\log(U_\SS(y,\tt))=\exp\left(\de \sum_{k\geq 1}\frac{(1-U_S(\tt))^k}{k}\right)$$ 
has coefficients which are polynomial in~$\de$.
Thus, in order to prove~\eqref{eq:Zcoeff-as-boxed-multi2}, it suffices to prove it for infinitely many values of $\de\in \CC$.
Let $\al$ be a positive integer, let $\de=-m-1-\al$, and let
\begin{eqnarray*}
\widetilde{Z}_{\hSS}(\de,y,\tt)&=&\sum_{\nn\in\NN^N}\frac{\tt^\nn}{\nn!}\widetilde{Z}_{\hSS,\nn}(\de,y).
\end{eqnarray*}
We have 
\begin{eqnarray*}
\widetilde{Z}_{\hSS}(\de,y,\tt)&=&\sum_{r=0}^\infty\sum_{\ga_1,\ldots, \ga_r\in \mC^{(m,N)}}\pol_r\left(r-1-\al-\wid(\ga_1)-\cdots-\wid(\ga_r)\right)\prod_{i=1}^r\frac{y^{\energy_\hSS(\ga_i)}t^{|\ga_i|}}{|\ga_i|!}\\
&=&\sum_{r=0}^\infty\sum_{\ga_1,\ldots, \ga_r\in \mC^{(m,N)}}(-1)^r{\al+\wid(\ga_1)+\cdots+\wid(\ga_r)\choose r}\prod_{i=1}^r\frac{y^{\energy_\hSS(\ga_i)}t^{|\ga_i|}}{|\ga_i|!}.
\end{eqnarray*}
Using Claim~\ref{claim:tau} gives
\begin{eqnarray*}
\widetilde{Z}_{\hSS}(\de,y,\tt)&=&\sum_{r=0}^\infty(-1)^r\sum_{\ga_1,\ldots, \ga_r\in \mC^{(m,N)}}\tau(\al;\wid(\ga_1),\ldots,\wid(\ga_r))\prod_{i=1}^r\frac{y^{\energy_\hSS(\ga_i)}t^{|\ga_i|}}{|\ga_i|!}\\
&=&\left(\sum_{R\in \mR}\prod_{v\textrm{ node of }R}-\sum_{\ga\in \mC^{(m,N)},~ \wid(\ga)=\ch(v)}\frac{y^{\energy_\hSS(\ga)}t^{|\ga|}}{|\ga|!}\right)^{\al-1}\\
&&~~\times \left(\sum_{R\in \mR}\vv(R)\prod_{v\textrm{ node of }R}-\sum_{\ga\in \mC^{(m,N)},~ \wid(\ga)=\ch(v)}\frac{y^{\energy_\hSS(\ga)}t^{|\ga|}}{|\ga|!}\right)\\
&=&U_\hSS(y,t)^{-\de-m-2}U_\hSS^\bu(y,t).
\end{eqnarray*}
for all $\de\leq -m-2$. Thus, 
$\ds \widetilde{Z}_{\hSS}(\de,y,\tt)=U_\hSS(y,t)^{-\de-m-2}U_\hSS^\bu(y,t)$ for all $\de\in\CC$, and extracting the coefficient of $\tt^{\nn}$ gives~\eqref{eq:Zcoeff-as-boxed-multi2}.
\end{proof}

We can now complete the proof of Theorem~\ref{thm:energy-count-multi}. By Lemma~\ref{lem:3-multi},
$$\lim_{\de\to \infty}\frac{Z_{\hSS,\de}(\de,y,\tt)}{U_\hSS(y,\tt)^{-\de-m-2}U_\hSS^\bu(y,\tt)}=1,$$
where the limit is taken coefficient by coefficient in $\tt$. Thus, by Lemma~\ref{lem:2-multi},
\begin{eqnarray*}
\frac{1}{q}\log(P_\hSS(q,y,\tt))&=&\lim_{\de\to \infty} \frac{1}{\de}\log(Z_{\hSS,\de}(\de,y,\tt))\\
&=& \lim_{\de\to \infty} \frac{-\de-m-2}{\de}\log\left(U_\hSS(y,\tt)\right)+ \lim_{\de\to \infty} \frac{1}{\de}\log\left(U_\hSS^\bu(y,\tt)\right)\\
&& ~+\lim_{\de\to \infty} \log\left(\frac{Z_{\hSS,\de}(\de,y,\tt)}{U_\hSS(y,\tt)^{-\de-m-2}U_\hSS^\bu(y,\tt)}\right)\\
&=& -\log(U_\hSS(y,\tt)).
\end{eqnarray*}
Hence, 
$$\ds P_\hSS(q,y,\tt) =(U_\hSS(y,\tt))^{-q}=\left(\sum_{\nn\in\NN^N}\frac{\tt^\nn}{\nn!}\sum_{(T,B)\in \mU^{(m)}(\nn)}(-1)^{|B|}y^{\energy_\hSS(T,B)}\right)^{-q}.$$
This gives~\eqref{eq:energy-count-multi}, and setting $y=0$ gives~\eqref{eq:GF-regions-multi}. Lastly, in the case where $\hSS$ is multi-transitive, \eqref{eq:GF-regions-transitive-multi} follows from~\eqref{eq:GF-regions-multi} via~\eqref{eq:signed-to-unsigned}.

\medskip

\section{A simple bijection for regions of transitive deformations of the braid arrangement}\label{sec:bij}
In this section we present a simple bijection between regions of the arrangement $\mA_\SS$ and the trees in $\mT_\SS(n)$ for any transitive tuple $\SS$. We first recall a bijection between the regions of $\mA^{(m)}(n)$ and labeled parenthesis systems in Subsection~\ref{sec:bij-prelim}, and combine it with a convenient bijection between parenthesis systems and trees. Then, we treat the cases of the Shi, semiorder and Linial arrangements in Subsection~\ref{sec:bij-m=1}, before treating the general case in Subsection~\ref{sec:bij-gle}.

\subsection{Preliminary: bijection between regions of $\mA_{[-m..m]}(n)$ and~$\mT^{(m)}(n)$.}\label{sec:bij-prelim}~\\
We first recall a classical encoding of the regions of $\mA_{[-m..m]}(n)$ by labeled, non-nesting, parenthesis systems. An example is represented in Figure~\ref{fig:encoding-by-sketch}.

A $m$-\emph{parenthesis system} of \emph{size} $n$ is a word $w$ on the alphabet $\{\al,\be\}$ with $n$ letters $\al$ and $mn$ letters $\be$, 
such that no prefix of $w$ contains more $\be$'s than $m$ times the number of $\al$'s. It is well known that there are $\Cat^{(m)}(n)=\frac{((m+1)n)!}{n!(mn+1)!}$ $m$-parenthesis systems of size $n$, and that such words bijectively encode rooted plane $(m+1)$-ary trees with $n$ nodes (see e.g.~\cite[Chapter 5.3]{Stanley:volume2}). A \emph{$m$-sketch} of size $n$ is a word $\tw$ obtained from a parenthesis system $w$ by replacing the $i$th letter $\al$ by the letter $\al_{\pi(i)}$ for some permutation $\pi$ of $[n]$. We denote by $\tD^{(m)}(n)$ the set of $m$-sketches of size~$n$. 
Clearly, $|\tD^{(m)}(n)|=n!\Cat^{(m)}(n)=\frac{((m+1)n)!}{(mn+1)!}=|\mT^{(m)}(n)|$. We now describe bijections between the regions of $\mA_{[-m..m]}(n)$, and the sets $\tD^{(m)}(n)$ and $\mT^{(m)}(n)$. The case $m=1$, $n=3$ is represented in Figures~\ref{fig:Catalan-arrang-paren+mini} and~\ref{fig:Catalan-arrang-trees+mini}.

We first need to \emph{annotate} our sketches. Let $A^{(m)}(n)$ be the alphabet made of the $(m+1)n$ letters $\{\al_i^{(s)}~|~i\in[n],s\in[0..m]\}$. We call \emph{$\al$-letters} the letters $\al^{(0)}_i$ for $i\in[n]$, and \emph{$\be$-letters} the letters $\al^{(s)}_i$ for $i\in[n],s\in[m]$. 
For a word $\hw$ on the alphabet $A^{(m)}(n)$, we say that the letter $\al_i^{(s)}$ is \emph{active} in $\hw$ if $s<m$, and $\al_{i}^{(s)}$ appears in $\hw$ but $\al_i^{(s+1)}$ does not.
The \emph{annotation} of a sketch $\tw=\tw_1\cdots\tw_{(m+1)n}$ is the word $\hw=\hw_1\cdots\hw_{(m+1)n}$ obtained by applying the following rule for $p=1,2,\ldots,(m+1)n$: if $\tw_p=\al_i$ then set $\hw_p=\al^{(0)}_i$, while if $\tw_p=\be$ then set $\hw_p=\al^{(s+1)}_i$, where $\al_i^{(s)}$ is the first active letter in $\hw_1\cdots \hw_{p-1}$ (it is easy to see that there is always such a letter). 
We denote by $\mD^{(m)}(n)$ the set of annotated $m$-sketches of size $n$.
It is easy to see that a word $\hw=\hw_1\cdots \hw_{(m+1)n}$ is in $\mD^{(m)}(n)$ if and only if 
\begin{compactitem} 
\item[(a)] $\{\hw_1,\ldots,\hw_{(m+1)n}\}=A^{(m)}(n)$,
\item[(b)] for all $i\in[n]$ and all $s\in[m]$, the letter $\al_i^{(s-1)}$ appears before $\al_i^{(s)}$, 
\item[(c)] for all $i,j\in[n]$ and all $s,t\in[m]$, if $\al_i^{(s-1)}$ appears before $\al_j^{(t-1)}$, then $\al_i^{(s)}$ appears before $\al_j^{(t)}$.
\end{compactitem}

\fig{width=.9\linewidth}{encoding-by-sketch}{The mapping $\si_1$ associating an annotated 1-sketch to any point $(x_1,\ldots,x_n)$ in $\RR^n\setminus \bigcup_{H\in \mA_{[-1..1]}(n)}H$. Graphically, the annotated 1-sketch $\hw=\hw_1\cdots \hw_{2n}$ is represented by a set of non-nesting parentheses on $2n$-points corresponding to the letters. More precisely, if the letters $\al^{(0)}_i$ to $\al^{(1)}_i$ are in position $p$ and $q$ of $\hw$, then parenthesis labeled $i$ goes from the $p$th point to the $q$th point.}

We now associate an annotated $m$-sketch of size $n$ to each region of $\mA_{[-m..m]}(n)$. Let $\xx=(x_1,\ldots,x_n)$ be a point in $\RR^n\setminus \bigcup_{H\in \mA_{[-m..m]}(n)}H$. Observe that the condition $\xx\notin \bigcup_{H\in \mA_{[-m..m]}(n)}H$ is equivalent to the fact that the numbers $\{x_i+s~|~i\in[n],s\in[0..m]\}$ are all distinct. We define $z_1,\ldots,z_{(m+1)n}$ by the conditions $z_1<\ldots<z_{(m+1)n}$ and $\{z_1,\ldots,z_{(m+1)n}\}= \{x_i+s~|~i\in[n],s\in[0..m]\}$. Then, we define $\si_m(\xx)=\hw_1\hw_2\cdots \hw_{(m+1)n}$, where $\hw_p=\al_i^{(s)}$ if $z_p=x_{i}+s$. 
Here are basic properties of the mapping $\si_m$.
\begin{compactenum}
\item[(i)] For any $\xx\notin \bigcup_{H\in \mA_{[-m..m]}(n)}H$, the word $\si_m(\xx)$ is an annotated $m$-sketch. Indeed, it clearly satisfies the conditions (a), (b), (c). 
\item[(ii)] The mapping $\si_m$ is constant over each region of $\mA_{[-m..m]}(n)$. Indeed, the order of the numbers $\{x_i+s~|~i\in[n],s\in[0..m]\}$ cannot change when $\xx$ moves continuously inside $\RR^n\setminus \bigcup_{H\in \mA_{[-m..m]}(n)}H$.
\item[(iii)] The sketch $\si_m(\xx)$ identifies the region containing $\xx$. 
Indeed, for all $i,j\in[n]$, and all $s\in[0..m]$, $x_i-x_j<s$ if $\al^{(0)}_i$ appears before $\al^{(s)}_j$ in $\si_m(\xx)$ and $x_i-x_j>s$ otherwise.
\item[(iv)] For any annotated $m$-sketch $\hw=\hw_1\cdots \hw_{(m+1)n}$, there exists $\xx\in \RR^n\setminus \bigcup_{H\in \mA_{[-m..m]}(n)}H$ such that $\si(\xx)=\hw$. 
Indeed, one can simultaneously define $\xx\in\si^{-1}(\hw)$ and $z_1,\ldots,z_{(m+1)n}$ by applying the following rules for $p=1,2,\ldots,(m+1)n$:
if $\hw_p=\al^{(0)}_i$, then set $z_p=z_{p-1}+1/2^p$ (with $z_0=0$) and set $x_i=z_p$, while if $\hw_p=\al^{(s)}_i$ with $s\neq 0$, then set $z_p=x_{i}+s$.
\end{compactenum}
Properties (i) and (ii) show that $\si_m$ is a mapping from the regions of $\mA_{[-m..m]}(n)$ to $\mD^{(m)}(n)$. Properties (iii) and (iv) imply that $\si_m$ is a bijection. In particular, this shows that $\mA_{[-m..m]}(n)$ has $\frac{((m+1)n)!}{(mn+1)!}$ regions.
The bijection $\si_1$ is represented in Figure~\ref{fig:Catalan-arrang-paren+mini}. Note that (iii) gives the inverse bijection $\si_m^{-1}$ in terms of inequality, while (iv) gives an explicit point in $\si_m^{-1}(\hw)$.

\fig{width=\linewidth}{Catalan-arrang-paren+mini}{The Catalan arrangement $\mA_{\{-1,0,1\}}(3)$, and the annotated 1-sketches corresponding to each region. The sketches marked $A$ are Shi maximal, the sketches marked $B$ are semiorder maximal, and those marked both $A$ and $B$ are Linial maximal.}



Next, we describe a bijection $\phi_m$ between $\mD^{(m)}(n)$ and the set $\mT^{(m)}(n)$ of $(m+1)$-ary trees with labeled nodes\footnote{Of course, any classical bijection between $m$-parenthesis systems and $(m+1)$-ary trees induces a bijection between $\mD^{(m)}(n)$ and $\mT^{(m)}(n)$ (by sending labels from the parentheses to the nodes). The non-classical bijection $\phi_m$ is chosen because it is well adapted to the ``non-nesting'' nature of annotated sketches.}.
Let $T$ be a rooted plane tree. We define the \emph{drift} of a vertex $v$ of $T$ as $\drift(v)=\ls(u_1)+\cdots+\ls(u_k)$, where $u_0,u_1,\ldots,u_k=v$ are the vertices on the path from the root $u_0$ to $v$. We define the total order $\prec_T$ on vertices of $T$ by setting $u\prec_T v$ if either $\drift(u)<\drift(v)$, or $\drift(u)=\drift(v)$ and $u$ appears before $v$ in the postfix order of $T$ (recall that the \emph{postfix order} is the order of appearance of the vertices when turning counterclockwise around the tree starting at the root). The order $\prec_T$ is represented in Figure~\ref{fig:phi}(a).
 
\fig{width=\linewidth}{phi}{(a) The order $\prec_T$ for this binary tree $T$ is $a\!\prec_T \!b\!\prec_T\!c\!\prec_T\!d\!\prec_T\cdots\prec_T s$. (b) The annotated 1-sketch $\hw=\all_3\all_1\bee_3\all_5\all_4\bee_1\bee_5\bee_4\all_2\bee_2$, and the associated tree $\phi_1(\hw)\in\mT^{(1)}(5)$.}

Let $\tT(n)$ be the set of rooted plane trees, with (at most $n$) nodes labeled with distinct numbers in $[n]$, and some special leaves called \emph{buds} (see Figure~\ref{fig:phi-steps}). 
If $T$ has some buds, we call \emph{first bud} the least bud of $T$ for the order $\prec_T$.
Let $\hw=\hw_1\ldots\hw_{(m+1)n}\in\mD^{(m)}(n)$, and let $\hw^p=\hw_1\ldots\hw_{p}$ be its prefix of length $p$. We define the trees $\tphi_m(\hw^0),\ldots,\tphi_m(\hw^{(m+1)n})\in\tT(n)$ as follows: 
\begin{compactitem}
\item $\tphi_m(\hw^0)$ is the tree with one bud and no other vertex,
\item if $p>0$ and $\hw_p=\al^{(s)}_i$ with $s>0$, then $\tphi_m(\hw^p)$ is obtained from $\tphi(\hw^{p-1})$ by replacing its first bud by a (non-bud) leaf,
\item if $p>0$ and $\hw_p=\al^{(0)}_i$, then $\tphi_m(\hw^p)$ is obtained from $\tphi(\hw^{p-1})$ by replacing its first bud by a node labeled $i$ with $m+1$ children, all of them buds. 
\end{compactitem}
The trees $\tphi(\hw^{p})$ are represented in Figure~\ref{fig:phi-steps}. It is clear, by induction on $p$, that $\tphi_m(\hw^p)$ has $1+m\, n_\al-n_\be$ buds, where $n_\al$ and $n_\be$ are respectively the number of $\al$-letters and $\be$-letters in $\hw^p$. 
In particular $\tphi_m(\hw^p)$ has at least one bud, so that $\tphi_m$ is well defined, and $\tphi_m(\hw)$ has exactly one bud. We denote by $\phi_m(\hw)$ the tree in $\mT^{(m)}(n)$ obtained from $\tphi_m(\hw)$ by replacing its bud by a leaf; see Figure~\ref{fig:phi}(b). Before showing that $\phi_m$ is a bijection, we describe the inverse mapping $\psi_m$. 
Let $T\in \mT^{(m)}(n)$ and let $u_0\prec_T u_1\prec_T\cdots \prec_T u_{(m+1)n}$ be the vertices of $T$ ($T$ has $n$ nodes and $mn+1$ leaves). 
Let $\psi_m(T)$ be the word $\hw=\hw_1\ldots\hw_{(m+1)n}$ defined as follows: for all $p\in[(m+1)n]$, if $u_p$ is the $(s+1)$st child of the node $i$, then $\hw_p=\al_i^{(s)}$. 

\fig{width=\linewidth}{phi-steps}{The trees $\tphi_m(\hw^0),\ldots,\tphi_m(\hw^{(m+1)n})$ for $\hw=\all_3\all_1\bee_3\all_5\all_4\bee_1\bee_5\bee_4\all_2\bee_2$. Buds are represented by triangles, and leaves by dots.}

\begin{prop}\label{prop:bij-sketch-to-tree}
The mapping $\phi_m$ is a bijection between $\mD^{(m)}(n)$ and $\mT^{(m)}(n)$. The inverse mapping is $\psi_m$. Moreover, for $\hw\in\mD^{(m)}(n)$, if the letter following $\al_i^{(s)}$ in $\hw$ is $\al_j^{(t)}$, then the $(s+1)$st child of the node $i$ in $T=\phi_m(\hw)$ is the node $j$ if $t=0$, and a leaf otherwise.
\end{prop}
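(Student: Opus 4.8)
The plan is to show that $\phi_m$ and $\psi_m$ are well-defined maps between $\mD^{(m)}(n)$ and $\mT^{(m)}(n)$ and that they are mutually inverse. Since these two sets are finite of the same cardinality $\frac{((m+1)n)!}{(mn+1)!}$, it suffices to check that $\psi_m$ lands in $\mD^{(m)}(n)$, that $\phi_m$ lands in $\mT^{(m)}(n)$ (the latter being essentially the bud-counting observation already made in the text, which shows $\tphi_m(\hw^p)$ always has a bud and that $\tphi_m(\hw)$ has exactly one), and that $\phi_m\circ\psi_m=\mathrm{id}$; surjectivity of $\phi_m$ between equinumerous finite sets then forces $\phi_m$ to be a bijection with inverse $\psi_m$. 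The ``moreover'' clause will be read off at the very end, once we know that $\hw=\psi_m(\phi_m(\hw))$ for every $\hw$.

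The combinatorial heart of the argument, and the step I expect to be hardest, is to establish a few structural properties of the order $\prec_T$. Writing $v^{\to}$ for the right sibling of a non-last child $v$ (so if $v$ is the $s$-th child of a node then $v^{\to}$ is its $(s+1)$-st child), I would prove: (i) $\prec_T$ is a linear extension of the ancestor order, i.e.\ every vertex is $\prec_T$-preceded by all its proper ancestors; (ii) the children of a fixed node increase left-to-right for $\prec_T$; (iii) the immediate $\prec_T$-successor of an internal node is its leftmost child; and (iv) the \emph{non-nesting} property $v\prec_T w \Rightarrow v^{\to}\prec_T w^{\to}$ for all non-last children $v,w$. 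Properties (i)--(iii) are comparatively easy from the definition of $\drift$: a non-leftmost child has strictly larger drift than its parent (so comes later), while a leftmost child has the same drift as its parent and immediately follows it among the vertices of that drift level. The crux is (iv): since $\drift(v^{\to})=\drift(v)+1$, when $v$ and $w$ sit at the same drift level the comparison is governed entirely by the postfix (counterclockwise contour) order, and one must rule out the ``nesting'' configuration in which $w$ is a same-drift descendant of $v$. I would prove (iv) by induction on $T$, analysing how the contour order transforms under passage to right siblings; this is exactly where the specific contour convention is used.

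Granting (i)--(iv), well-definedness of $\psi_m$ is immediate: condition (a) holds because each of the $n$ nodes contributes exactly the $m+1$ children recorded by $\al_i^{(0)},\ldots,\al_i^{(m)}$, every non-root vertex is the child of a unique node with a unique index, and the root is the $\prec_T$-minimum $u_0$, which receives no letter; condition (b) is (ii) together with $\drift(v^{\to})=\drift(v)+1$; and condition (c) is precisely (iv). For $\phi_m\circ\psi_m=\mathrm{id}$, fix $T$, set $\hw=\psi_m(T)$ with vertices $u_0\prec_T\cdots\prec_T u_{(m+1)n}$, and prove by induction on $p$ the invariant that $\tphi_m(\hw^{p})$ is obtained from the ancestor-closed set $\{u_0,\ldots,u_{p-1}\}$ (a connected subtree of $T$ through the root, empty when $p=0$) by placing a bud in the position of every vertex of $T$ that is still unplaced but all of whose proper ancestors are placed. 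In the inductive step, (i) and (iv) guarantee that $\prec$ computed inside the partial tree agrees with $\prec_T$, so the $\prec$-first bud of $\tphi_m(\hw^{p-1})$ is the $\prec_T$-least unplaced vertex, namely $u_{p-1}$; thus step $p$ realizes $u_{p-1}$. It remains to check that the type of the letter $\hw_p=\al_i^{(s)}$ (which, by construction of $\psi_m$, records that $u_p$ is the $(s+1)$-st child of node $i$) correctly realizes $u_{p-1}$: if $s=0$ then $u_p$ is the leftmost child of node $i$, so by (iii) its $\prec_T$-predecessor $u_{p-1}$ is node $i$ itself, which $\phi_m$ realizes as a node with label $i$; and if $s>0$ then $u_{p-1}$ cannot be internal (else, by (iii), its successor $u_p$ would be a leftmost child, forcing $s=0$), hence $u_{p-1}$ is a leaf, which $\phi_m$ also produces. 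Taking $p=(m+1)n$ and turning the final bud into a leaf recovers $T$, labels and all.

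Finally, the ``moreover'' clause falls out of this correspondence. For an arbitrary $\hw\in\mD^{(m)}(n)$ we may write $\hw=\psi_m(T)$ with $T=\phi_m(\hw)$, so that $\hw_p=\al_i^{(s)}$ means $u_p$ is the $(s+1)$-st child of node $i$ in $T$. If the following letter is $\hw_{p+1}=\al_j^{(t)}$, then the realization correspondence (step $p+1$ realizes $u_p$) shows that this very vertex $u_p$ is the object produced when reading $\hw_{p+1}$: it is the node $j$ when $t=0$ and a leaf when $t>0$. Hence the $(s+1)$-st child of node $i$ in $T$ is node $j$ if $t=0$ and a leaf otherwise, which is the asserted description.
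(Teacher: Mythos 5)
Your proof is correct and follows essentially the same route as the paper: both arguments rest on the observations that the immediate $\prec_T$-successor of a node is its leftmost child and that the order in which $\phi_m$ creates vertices coincides with $\prec_T$ of the resulting tree, and both close by combining one composition identity with the equality $|\mD^{(m)}(n)|=|\mT^{(m)}(n)|$. The only cosmetic differences are that you verify $\phi_m\circ\psi_m=\Id$ where the paper verifies $\psi_m\circ\phi_m=\Id$, and that you make explicit (as your properties (i)--(iv), in particular the non-nesting property) the facts about $\prec_T$ that the paper leaves implicit in its claim that $\psi_m(T)$ ``clearly satisfies'' conditions (a), (b), (c).
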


\begin{proof}
First note that for all $T\in\mT^{(m)}(n)$, the word $\psi_m(T)$ clearly satisfies the properties (a), (b), (c) of annotated $m$-sketches. Hence $\psi_m$ is a mapping from $\mT^{(m)}(n)$ to $\mD^{(m)}(n)$. 

Next we give an alternative description of $\psi_m$. Let $T\in\mT^{(m)}(n)$, and let $u_0\prec_T u_1\prec_T\cdots \prec_T u_{(m+1)n}$ be its vertices. 
Let $\tpsi_m(T)$ be the word $\tw=\tw_1\ldots \tw_{(m+1)n}$, where $\tw_p=\al_i$ if $u_{p-1}$ is the node labeled $i$, and $\tw_p=\be$ if $u_{p-1}$ is a leaf. We now show that $\hw=\psi_m(T)$ is the annotation of $\tw=\tpsi_m(T)$. It is easy to see that for all $q\in[0..(m+1)n]$, if the vertex $u_q$ is a node, then $u_{q+1}$ is its first child. Now let $p\in[(m+1)n]$ such that $\hw_p=\al^{(0)}_i$ for some $i\in[n]$. In this case $u_p$ is the first child of the node $i$, hence $u_{p-1}$ is the node $i$ and $\tw_p=\al_i$. Suppose now that $p\in[(m+1)n]$ is such that $\hw_p=\al^{(s)}_i$ for some $s\in[m],i\in[n]$. In this case $u_p$ is not a first child, thus $u_{p-1}$ is a leaf and $\tw_p=\be$. This proves that $\hw=\psi_m(T)$ is indeed the annotation of $\tw=\tpsi_m(T)$.

Next we prove that $\psi_m\circ\phi_m=\Id$ and $\phi_m\circ\psi_m=\Id$. Let $\hw\in\mT^{(m)}(n)$, and let $\hw^p$ be the prefix of length $p$. Let $T=\phi(\hw)$, and let $T^p=\tphi_m(\hw^p)$. 
We claim that the order in which the non-bud vertices are created in the sequence $\tphi_m(\hw^0),\tphi_m(\hw^1)\ldots,\tphi_m(\hw^{(m+1)n}),\phi_m(\hw)$ is the same as the order $\prec_T$. 
Indeed, for all $p\in[(m+1)n]$ the order $\prec_{T^p}$ on the vertices of $T^p$ coincide with the order $\prec_T$. So the first bud of $T^p$ is less than any bud of $T^{p+1}$ for the order $\prec_T$. This establish the claim. From the claim, and the alternative description of $\psi_m$ it follows directly that $\psi_m\circ\phi_m(\hw)=\hw$. And since $|\mD^{(m)}(n)|=|\mT^{(m)}(n)|$, $\phi_m$ and $\psi_m$ are inverse bijections.

Lastly, suppose that for $\hw\in\mD^{(m)}(n)$ we have $\hw_p=\al_i^{(s)}$ and $\hw_{p+1}=\al_j^{(t)}$. In this case, $u_p$ is the $(s+1)$st child of the node $i$ (by definition of $\psi_m$) and $u_p$ is the node $j$ if $s=0$ and a leaf otherwise (by definition of $\tpsi_m$).
\end{proof}

Let $\Phi_m=\phi_m\circ \sigma_m$ be the bijection from the the regions of $\mA_{[-m..m]}(n)$ to $\mT^{(m)}(n)$, and let $\Psi_m=\sigma^{-1}_m\circ \psi_m$ its inverse. 
\begin{lemma}\label{lem:Psi-characterization}
For $T\in\mT^{(m)}(n)$, $\Psi_m(T)$ is the region of $\mA_{[-m..m]}(n)$ made of the points $\xx=(x_1,\ldots,x_n)$ satisfying the following inequalities for all distinct integers $i,j\in[n]$, and all $s\in[0..m]$: $x_i-x_j<s$ if $i\prec_T v$ where $v$ is the $(s+1)$st child of the node $j$, and $x_i-x_j>s$ otherwise.
\end{lemma}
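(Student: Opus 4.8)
The plan is to unwind the definition $\Psi_m=\sigma_m^{-1}\circ\psi_m$ and to translate the positional description of $\psi_m(T)$ into the inequalities supplied by Property~(iii) of $\sigma_m$. Write $\hw=\psi_m(T)$ and let $u_0\prec_T u_1\prec_T\cdots\prec_T u_{(m+1)n}$ be the vertices of $T$. By Property~(iii), the region $\Psi_m(T)=\sigma_m^{-1}(\hw)$ consists of the points $\xx$ such that, for distinct $i,j\in[n]$ and $s\in[0..m]$, one has $x_i-x_j<s$ if and only if $\al_i^{(0)}$ precedes $\al_j^{(s)}$ in $\hw$. The key bookkeeping point is that, by the definition of $\psi_m$, the letter $\al_i^{(0)}$ does \emph{not} mark the node $i$ itself but rather occupies the position of the first child $c_i$ of $i$ (since $\hw_p=\al_i^{(0)}$ exactly when $u_p$ is the first child of $i$), while $\al_j^{(s)}$ occupies the position of the $(s+1)$st child $v$ of $j$. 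As the positions of $\hw$ are listed in $\prec_T$-order, ``$\al_i^{(0)}$ precedes $\al_j^{(s)}$'' is equivalent to $c_i\prec_T v$. Thus the lemma reduces to showing that $c_i\prec_T v\iff i\prec_T v$ for every such $v$.

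The heart of the argument is the following claim: \emph{for every node $i$, its first child $c_i$ is the immediate $\prec_T$-successor of $i$}. Granting this, the equivalence $c_i\prec_T v\iff i\prec_T v$ follows by a short case analysis. Since $v$ is a child of $j\neq i$ it cannot be $c_i$ (a child of $i$). If moreover $v\neq i$, then $v\notin\{i,c_i\}$, and because $i$ and $c_i$ are $\prec_T$-consecutive we get $i\prec_T v\iff c_i\prec_T v$. If instead $v=i$ (which happens precisely when $i$ is the $(s+1)$st child of $j$), then both $c_i\prec_T i$ and $i\prec_T i$ are false, using $i\prec_T c_i$ from the claim, so the equivalence holds trivially. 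Combining with the first paragraph gives $x_i-x_j<s\iff i\prec_T v$, which is the asserted description of $\Psi_m(T)$; the complementary inequality $x_i-x_j>s$ holds whenever $i\not\prec_T v$ because $\xx$ avoids every hyperplane of $\mA_{[-m..m]}(n)$.

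To prove the claim I would argue directly from the definition of $\prec_T$ as the drift-order refined by the postfix order. Since $c_i$ is a leftmost child it has no left sibling, so $\drift(c_i)=\drift(i)$; hence $i$ and $c_i$ lie in the same drift class, and inside a drift class $\prec_T$ is governed by the postfix (counterclockwise contour) order. In that order $i$ appears before all of its descendants, and the vertices appearing strictly between $i$ and $c_i$ are exactly those lying in the subtrees hanging from the second, third, \dots, $(m+1)$st children of $i$. Every such vertex is a descendant of $i$ through a child carrying at least one left sibling, so its drift is strictly larger than $\drift(i)$. Consequently no vertex of drift $\drift(i)$ occurs between $i$ and $c_i$, and since $i$ precedes $c_i$ in the postfix order, $c_i$ is the immediate $\prec_T$-successor of $i$.

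The main obstacle is exactly this claim about the postfix order: one must verify that the counterclockwise contour records $i$ before $c_i$ and sweeps through the remaining children's subtrees in between, and then marry this with the drift count. (As a sanity check, these are precisely the features that force $\psi_m(T)\in\mD^{(m)}(n)$, consistent with $\psi_m=\phi_m^{-1}$ from Proposition~\ref{prop:bij-sketch-to-tree}.) Once the claim is in hand, the remainder of the proof is a routine translation through Property~(iii) and the definition of $\psi_m$, with the only delicate step being the observation that $\al_i^{(0)}$ tracks the child $c_i$ rather than the node $i$.
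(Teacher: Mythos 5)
Your proof is correct and follows essentially the same route as the paper: translate Property~(iii) of $\sigma_m$ into a precedence of letters in $\psi_m(T)$, observe that $\al_i^{(0)}$ sits at the position of the first child of $i$, and replace that child by $i$ itself using the fact that the first child is the immediate $\prec_T$-successor. The only difference is that you prove this successor claim in full (via the drift and contour-order analysis), whereas the paper asserts it as ``easy to see'' in the proof of Proposition~\ref{prop:bij-sketch-to-tree}; your case analysis around $v=i$ and $v\neq c_i$ is a harmless elaboration of the paper's one-line remark.
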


\begin{proof} 
Let $\xx\in\Psi_m(T)=\psi_m(\si_m^{-1}(T)$. By Property (iii) of $\sigma_m$, $x_i-x_j<s$ if and only if $\al_i^{(0)}$ appears before $\al_j^{(s)}$ in $\hw=\psi_m(T)$. By definition of $\psi_m$, this happens if and only if $u\prec_T v$, where $u$ is the first child of the node $i$. Moreover, since $u\neq v$ and $u$ is the successor of $i$ in the $\prec_T$ order, this happens if and only if $i\prec_T v$.
\end{proof}


\medskip
\subsection{Bijections for the Shi, semiorder, and Linial arrangements.}\label{sec:bij-m=1}~\\
In this section we give a bijection between the regions of $\mA_S(n)$, and the trees in $\mT_S$ for all $S\subseteq\{-1,0,1\}$. 
Up to symmetry, the interesting cases are $S=\{-1,0,1\}$ (Catalan arrangement), $S=\{0,1\}$ (Shi arrangement), $S=\{-1,1\}$ (semiorder arrangement), $S=\{1\}$ (Linial arrangement), and $S=\{0\}$ (braid arrangement).
We already treated the case $S=\{-1,0,1\}$ in the previous Section. 
For the Shi arrangements our bijection can be seen as a relative to~\cite{Athanasiadis:bijection-Shi} as discussed in Section \ref{sec:bij-link-Shi}. The bijection for the semiorder and Linial arrangements seem to be new.


The basic idea of our bijection for the Shi, semiorder, and Linial arrangements is to think of regions in these arrangements as union of regions of the Catalan arrangement. Then we will choose a canonical representative among these regions, so as to identify regions of the Shi, semiorder and Linial arrangements with certain canonical 1-sketches. We show that the bijection $\phi_1$ between $\mD^{(1)}(n)$ and $\mT^{(1)}(n)$ induces bijections between the canonical 1-sketches for $\mA_{\{0,1\}}(n)$, $\mA_{\{-1,1\}}(n)$, and $\mA_{\{1\}}(n)$ and the trees in $\mT_{\{0,1\}}(n)$, $\mT_{\{-1,1\}}(n)$, and $\mT_{\{1\}}(n)$ respectively. This is represented in Figure~\ref{fig:Catalan-arrang-trees+mini}. 
Moreover, the bijections induced by $\Phi_1=\phi_1\circ \si_1$ between regions and trees have simple inverses.

\begin{definition}
Let $\hw$ and $\hw'$ be annotated 1-sketches of size $n$.
We say that $\hw$ and $\hw'$ are related by a \emph{Shi move} if $\hw'$ is obtained from $\hw$ by swapping two consecutive letters $\bee_i$ and $\all_j$ with $i<j$. We say that $\hw$ and $\hw'$ are related by a \emph{semiorder move} if $\hw'$ is obtained from $\hw$ by swapping two consecutive letters $\all_i$ and $\all_j$ and also two consecutive letters $\bee_i$ and $\bee_j$ (for the same pair $\{i,j\}$). We say that $\hw$ and $\hw'$ are related by a \emph{Linial move} if they are related by either a Shi or semiorder move. Lastly, we say that $\hw$ and $\hw'$ are \emph{Shi equivalent} (resp. \emph{semiorder equivalent}, \emph{Linial equivalent}) if one can be obtained from the other by performing a series of Shi (resp. semiorder, Linial) moves. 
\end{definition}

Let $\hw,\hw'\in\mD^{(1)}(n)$ be annotated 1-sketches.
Let $\rho=\si^{-1}(\hw)$ and $\rho'=\si^{-1}(\hw')$ be the regions of $\mA_{\{-1,0,1\}}(n)$ corresponding to $\hw$ and $\hw'$. Observe that $\hw$ and $\hw'$ are related by the Shi move swapping $\all_i$ and $\bee_j$ if and only if the regions $\rho$ and $\rho'$ are separated only by the hyperplane $H_{i,j,-1}=\{x_i-x_j=-1\}$. 
Thus, $\hw$ and $\hw'$ are Shi equivalent if and only if one can go from $\rho$ to $\rho'$ only crossing hyperplanes of the forms $H_{i,j,-1}$ for $i<j$. In other words, $\hw$ and $\hw'$ are Shi equivalent if and only if $\rho$ and $\rho'$ are contained in the same region of the Shi arrangement $\mA_{\{0,1\}}(n)$. Similarly, $\hw$ and $\hw'$ are related by the semiorder move swapping $\all_i$ and $\all_j$ (and $\bee_i$ and $\bee_j$) if and only if the regions $\rho$ to $\rho'$ are separated only by the hyperplane $H_{i,j,0}=\{x_i-x_j=0\}$. Thus, $\hw$ and $\hw'$ are semiorder equivalent if and only if $\rho$ and $\rho'$ are contained in the same region of the semiorder arrangement $\mA_{\{-1,1\}}(n)$. Also, $\hw$ and $\hw'$ are Linial equivalent if and only if $\rho$ and $\rho'$ are contained in the same region of the Linial arrangement $\mA_{\{1\}}(n)$. To summarize:
\begin{lemma}\label{lem:moves=regions}
Let $\hw$ and $\hw'$ be annotated 1-sketches of size $n$, and let $\rho=\si^{-1}(\hw)$ and $\rho'=\si^{-1}(\hw')$ be the regions of $\mA_{\{-1,0,1\}}(n)$ corresponding to $\hw$ and $\hw'$. The annotated 1-sketches $\hw$ and $\hw'$ are Shi (resp. semiorder, Linial) equivalent if and only if $\rho$ and $\rho'$ are contained in the same region of $\mA_{\{0,1\}}(n)$ (resp. $\mA_{\{-1,1\}}(n)$, $\mA_{\{1\}}(n)$).
\end{lemma}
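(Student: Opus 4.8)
The plan is to reduce all three equivalences to a single local statement relating one move to one hyperplane crossing, and then to deduce the global statement from the convexity of the regions of the coarser arrangements.

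First I would establish the \emph{single-move} correspondence on which all three cases rest. Recall from Property (iii) of $\si_1$ that for $i<j$ the inequality $x_i-x_j<-1$ holds if and only if $\bee_i$ precedes $\all_j$ in the annotated sketch, so the relative order of the pair $\bee_i,\all_j$ records on which side of $H_{i,j,-1}=\{x_i-x_j=-1\}$ the region $\si_1^{-1}(\hw)$ lies; similarly, the relative order of the pair $\all_i,\all_j$ records the side of $H_{i,j,0}=\{x_i-x_j=0\}$, and by the non-nesting condition (c) of annotated sketches the pair $\bee_i,\bee_j$ is in the same relative order as $\all_i,\all_j$. I would then check that a transversal crossing of a single hyperplane corresponds exactly to one move: at a crossing of $H_{i,j,-1}$ the coinciding coordinates $x_i+1=x_j$ are consecutive in the sorted list defining $\si_1$, so the adjacent letters $\bee_i$ and $\all_j$ simply swap, which is a Shi move; at a crossing of $H_{i,j,0}$ both $x_i=x_j$ and $x_i+1=x_j+1$ coincide, so the adjacent pairs $\all_i,\all_j$ and $\bee_i,\bee_j$ swap simultaneously, which is a semiorder move. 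This yields: $\hw,\hw'$ differ by a single Shi (resp. semiorder) move if and only if $\rho,\rho'$ are separated by $H_{i,j,-1}$ (resp. $H_{i,j,0}$) and by no other hyperplane of $\mA_{\{-1,0,1\}}(n)$.

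Next I would identify the relevant hyperplane families with the coarser arrangements, observing that $\mA_{\{-1,0,1\}}(n)\setminus\mA_{\{0,1\}}(n)=\{H_{i,j,-1}:i<j\}$, that $\mA_{\{-1,0,1\}}(n)\setminus\mA_{\{-1,1\}}(n)=\{H_{i,j,0}:i<j\}$, and that $\mA_{\{-1,0,1\}}(n)\setminus\mA_{\{1\}}(n)$ is the union of these two families. The forward implication is then immediate: each Shi (resp. semiorder) move crosses only a hyperplane absent from $\mA_{\{0,1\}}(n)$ (resp. $\mA_{\{-1,1\}}(n)$), hence keeps the point in a fixed region of the coarser arrangement, so a whole sequence of moves keeps $\rho$ and $\rho'$ in the same coarse region.

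For the converse I would use that every region of the coarser arrangement is convex, being an intersection of open half-spaces. Choosing representatives in $\rho$ and $\rho'$ lying in the same coarse region, the straight segment between them stays in that convex region, hence avoids all hyperplanes of the coarser arrangement and meets only the removed ones; after a generic perturbation keeping its endpoints fixed it crosses $\mA_{\{-1,0,1\}}(n)$ one hyperplane at a time, away from all codimension-$2$ flats, so the regions it visits in order are linked by single transversal crossings of removed hyperplanes, i.e. by single moves. This produces the required sequence of moves and, via the single-move correspondence, shows $\hw$ and $\hw'$ are equivalent. The Linial case follows by combining the two, since a Linial move is by definition a Shi or semiorder move and the removed family is the union of the corresponding families. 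The main obstacle is precisely this converse connectivity step: one must guarantee that consecutive regions along the perturbed segment differ by exactly one transversal crossing of a single removed hyperplane, which is secured by convexity of the coarse region together with a standard general-position argument.
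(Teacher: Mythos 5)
Your proposal is correct and follows essentially the same route as the paper: the paper's argument (given in the paragraph preceding the lemma, which the lemma merely summarizes) is exactly the observation that a single Shi (resp.\ semiorder) move corresponds to the two regions being separated only by a hyperplane $H_{i,j,-1}$ (resp.\ $H_{i,j,0}$), so that equivalence amounts to being joinable by a path crossing only the hyperplanes absent from the coarser arrangement, i.e.\ lying in the same coarse region. The only difference is that you spell out the converse connectivity step (convexity of the coarse region plus a general-position perturbation), which the paper treats as immediate.
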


We consider the lexicographic order $\prec$ on $\mD^{(1)}(n)$ given by the following order on the alphabet: $\bee_1\prec \bee_2\prec \cdots\prec \bee_n\prec\all_1\prec\all_2\prec\cdots\prec\all_n$.
We say that an annotated 1-sketch $\hw$ is \emph{Shi locally-maximal} (resp. \emph{semiorder locally-maximal}, \emph{Linial locally-maximal}) if it is larger than any 1-sketch obtained from $\hw$ by a single Shi (resp. semiorder, Linial) move. We say that an annotated 1-sketch $\hw$ is \emph{Shi maximal} (resp. \emph{semiorder maximal}, \emph{Linial maximal}) if it is larger than any Shi (resp. semiorder, Linial) equivalent 1-sketch. The maximal 1-sketches are indicated in Figure~\ref{fig:Catalan-arrang-paren+mini}.

On the one hand, Lemma~\ref{lem:moves=regions} implies that regions of $\mA_{\{0,1\}}(n)$ (resp. $\mA_{\{-1,1\}}(n)$, $\mA_{\{1\}}(n)$) are in bijection with Shi (resp. semiorder, Linial) maximal 1-sketches in $\mD^{(1)}(n)$. 
On the other hand, locally-maximal 1-sketches are easy to characterize. The following result shows that the two notions actually coincide.

\begin{lemma}\label{lem:local-max-is-max} 
An annotated 1-sketch $\hw\in\mD^{(1)}(n)$ is Shi (resp. semiorder, Linial) maximal if and only if it is Shi (resp. semiorder, Linial) locally-maximal.
\end{lemma}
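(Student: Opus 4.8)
The implication from maximal to locally maximal is immediate: if $\hw$ is the $\prec$-largest element of its equivalence class, then it is in particular larger than each of its single-move neighbours (which lie in the same class and differ from $\hw$), so it is locally maximal. The content of the lemma is the converse, and the plan is to prove it by exhibiting each move, suitably oriented, as an \emph{increasing} rewriting rule and then showing that the resulting rewriting system is confluent. The unique normal form of each equivalence class is then simultaneously its only locally maximal element and its global maximum, which is exactly the desired coincidence.

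First I would orient the moves. In exactly one of its two directions each move strictly raises the word in the order $\prec$: a Shi move replacing a consecutive factor $\bee_i\all_j$ (with $i<j$) by $\all_j\bee_i$ is increasing because every $\al$-letter exceeds every $\be$-letter; and a semiorder move with $i<j$, replacing the consecutive opens $\all_i\all_j$ by $\all_j\all_i$ and the consecutive closes $\bee_i\bee_j$ by $\bee_j\bee_i$, is increasing since (using condition (c), which forces the open and close orders to agree) the first altered position is the open position carrying $\all_i\mapsto\all_j$. Call these the \emph{increasing} moves. Since $\mD^{(1)}(n)$ is finite and each increasing move strictly raises the $\prec$-rank, the increasing-move relation terminates, and an annotated $1$-sketch is locally maximal precisely when no increasing move applies to it, i.e. precisely when it is a normal form. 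By Newman's lemma it then suffices to verify \emph{local confluence}: whenever two distinct increasing moves apply to $\hw$, yielding $\hw_1$ and $\hw_2$, some common sketch is reachable from both $\hw_1$ and $\hw_2$ by increasing moves.

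The local confluence check splits according to whether the supports of the two moves overlap; if they act on disjoint positions they commute and $\hw_1,\hw_2$ both reduce in one step to the sketch obtained by performing both. For the Shi statement this already finishes the argument: two $\bee\all$ valleys cannot share a position (a shared position would be simultaneously a close and an open), so distinct Shi moves always commute. For the semiorder statement, the only overlapping case (again using condition (c)) is a block of three consecutive opens $\all_i\all_j\all_l$ together with the three consecutive closes $\bee_i\bee_j\bee_l$ and $i<j<l$; there the increasing moves act as the adjacent-transposition (bubble-sort) moves on the two blocks at once, and confluence to the common result $\all_l\all_j\all_i,\ \bee_l\bee_j\bee_i$ is the classical confluence of sorting three letters. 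For the Linial statement one must additionally resolve the overlap of a Shi move with a semiorder move, which can only occur by sharing a common open (a factor $\bee_i\all_j\all_b$ with $i<j<b$ and $\bee_j\bee_b$ consecutive) or a common close (a factor $\bee_a\bee_i\all_j$ with $a<i<j$ and $\all_a\all_i$ consecutive); in each case a short direct computation shows that the two orders of application, each completed by two further increasing moves, terminate at the same sketch.

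Granting local confluence, Newman's lemma gives confluence, so each equivalence class has a unique normal form; it is reachable from every member by increasing moves and is therefore at once the unique locally maximal element and the (necessarily locally maximal) global maximum. Hence locally maximal and maximal coincide in all three cases. The main obstacle is the local confluence verification, and within it the two mixed Shi/semiorder overlaps needed for the Linial statement; the bookkeeping is kept manageable by tracking each move through its effect on the underlying non-nesting parenthesis word together with the index permutation, so that every overlap reduces to one of the few small configurations above.
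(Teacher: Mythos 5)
Your argument is correct, but it follows a genuinely different route from the paper's. The paper treats the three cases separately: it first extracts invariants of each equivalence class (for Shi moves, the relative order of the $\al$-letters and, for $i>j$, the relative order of $\bee_i$ and $\all_j$; for semiorder moves, a description of the class via exchangeable indices; for Linial moves, the second invariant again), and then compares a locally-maximal $\hw$ with the maximal element $\hw'$ of its class at the first position where they differ, ruling out each possible discrepancy by local-maximality. You instead orient the moves as a strictly increasing rewriting system and invoke Newman's lemma, so that everything reduces to a critical-pair analysis. Your enumeration of overlaps is complete: distinct Shi moves are always position-disjoint; two increasing semiorder moves overlap only in a block $\all_i\all_j\all_l$ with $i<j<l$ together with the parallel block of closes, where confluence is the sorting of three letters; and a Shi move can meet a semiorder move only in the configurations $\bee_i\all_j\all_b$ ($i<j<b$, with $\bee_j\bee_b$ consecutive) or $\bee_a\bee_i\all_j$ ($a<i<j$, with $\all_a\all_i$ consecutive), and I verified that each of these resolves in three steps on either branch to a common word, as you assert. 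What your approach buys is uniformity (no class invariants are needed, and the same scheme handles all three arrangements at once) together with a slightly stronger conclusion, namely that the maximum is reachable from every member of the class by increasing moves alone; what it costs is the explicit critical-pair bookkeeping, which you only sketch for the mixed overlaps but which does close. It is also worth noting that your framework is a natural candidate for the paper's second open question (whether $\SS$-locally-maximal implies $\SS$-maximal for every transitive $\SS$): that question becomes precisely whether the increasing $\SS$-moves form a locally confluent system, although the family of critical pairs to resolve grows with $m$.
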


Before proving Lemma~\ref{lem:local-max-is-max} we explore its consequences.

\begin{corollary}\label{cor:bijection-Shi-SO-Linial}
The mapping $\Psi_1=\Phi_1^{-1}$ between $\mT^{(1)}(n)$ and the regions of $\mA_{\{-1,0,1\}}(n)$ induces a bijection $\Psi_{\{0,1\}}$ (resp. $\Psi_{\{-1,1\}}$, $\Psi_{\{1\}}$) between the trees in $\mT_{\{0,1\}}(n)$ (resp. $\mT_{\{-1,1\}}(n)$, $\mT_{\{1\}}(n)$) and the regions of $\mA_{\{0,1\}}(n)$ (resp. $\mA_{\{-1,1\}}(n)$, $\mA_{\{1\}}(n)$).
\begin{compactenum} 
\item For $T\in \mT_{\{0,1\}}(n)$ the region $\Psi_{\{0,1\}}(T)$ is defined by the following inequalities for all $1\leq i<j\leq n$:
$x_i-x_j<0$ iff $i\prec_T j$ (that is to say, node $i$ is less than node $j$ for the $\prec_T$ order), and $x_i-x_j<1$ iff $i\prec_T v$, where $v$ is the right child of $j$.
\item For $T\in \mT_{\{-1,1\}}(n)$ the region $\Psi_{\{-1,1\}}(T)$ is defined by the following inequalities for all $1\leq i<j\leq n$:
$x_i-x_j>-1$ iff $j\prec_T u$, and $x_i-x_j<1$ iff $i\prec_T v$, where $u$ is the right child of $i$ and $v$ is the right child of $j$.
\item For $T\in \mT_{\{1\}}(n)$ the region $\Psi_{\{1\}}(T)$ is defined by the following inequalities for all $1\leq i<j\leq n$: $x_i-x_j<1$ iff $i\prec_T v$, where $v$ is the right child of $j$.
\end{compactenum}
\end{corollary}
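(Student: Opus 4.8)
The plan is to prove all three bijections by a single mechanism: reduce each coarse arrangement to a distinguished family of $1$-sketches and push that family through $\phi_1$. First I would record that, by Lemma~\ref{lem:moves=regions}, the regions of $\mA_{\{0,1\}}(n)$ (resp. $\mA_{\{-1,1\}}(n)$, $\mA_{\{1\}}(n)$) are exactly the Shi- (resp. semiorder-, Linial-) equivalence classes of regions of the Catalan arrangement, so they are in bijection with the Shi- (resp. semiorder-, Linial-) \emph{maximal} sketches of $\mD^{(1)}(n)$, each finite class having a unique maximum for the lexicographic order $\prec$. By Lemma~\ref{lem:local-max-is-max} one may replace ``maximal'' by ``locally-maximal'' throughout, which is the crucial simplification: local maximality is a condition on adjacent letters of $\hw$ only. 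The map $\Psi_\bullet$ is then the restriction of $\Psi_1=\Phi_1^{-1}$, sending $T$ to the region of the coarse arrangement containing the Catalan region $\Psi_1(T)$; bijectivity will follow once $\phi_1$ is shown to carry the locally-maximal sketches onto the correct tree family.

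The combinatorial heart is to read off, from the alphabet order $\bee_1\prec\cdots\prec\bee_n\prec\all_1\prec\cdots\prec\all_n$, which single moves raise the word. Since every $\be$-letter precedes every $\al$-letter, swapping an adjacent pair $\bee_i\all_j$ into $\all_j\bee_i$ always raises the word and always preserves the defining conditions (a)--(c); the swap is a Shi move exactly when $i<j$, so a sketch is Shi locally-maximal iff it contains no adjacent pair $\bee_i\all_j$ with $i<j$. A semiorder move exchanges adjacent $\all_i,\all_j$ together with adjacent $\bee_i,\bee_j$, and (using the non-nesting condition (c)) raises the word precisely when it carries the larger index to the front; hence a sketch is semiorder locally-maximal iff there is no pair $i<j$ for which $\all_i\all_j$ and $\bee_i\bee_j$ are both adjacent. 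A Linial move being a Shi or a semiorder move, Linial local maximality is the conjunction of these two conditions.

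Next I would translate these forbidden adjacencies into tree conditions through Proposition~\ref{prop:bij-sketch-to-tree}, using the dictionary: $\bee_i$ immediately followed by $\all_j$ iff node $j$ is the right child of node $i$; $\all_i$ immediately followed by $\all_j$ iff node $j$ is the left child of node $i$; and $\bee_i$ immediately followed by a $\be$-letter iff the right child of node $i$ is a leaf. Feeding these into the patterns above, Shi local maximality becomes ``every right-child node $j$ satisfies $\parent(j)>j$'' (condition~(i), i.e. $\mT_{\{0,1\}}(n)$), and semiorder local maximality becomes ``every left-child node $j$ whose right sibling is a leaf satisfies $\parent(j)>j$'' (condition~(ii), i.e. $\mT_{\{-1,1\}}(n)$); both are literally Condition($S$) of Definition~\ref{def:mTS} for $S=\{0,1\}$ and $S=\{-1,1\}$. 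By Remark~\ref{rk:intersection-mTS} the Linial family is $\mT_{\{0,1\}}(n)\cap\mT_{\{-1,1\}}(n)=\mT_{\{1\}}(n)$, matching the conjunction obtained for Linial moves. This shows $\phi_1$ restricts to bijections onto $\mT_{\{0,1\}}(n)$, $\mT_{\{-1,1\}}(n)$ and $\mT_{\{1\}}(n)$, establishing the three bijection statements.

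Finally, the explicit inequalities come from specializing Lemma~\ref{lem:Psi-characterization} to $m=1$ and $s\in\{0,1\}$: one has $x_i-x_j<s$ iff $i\prec_T v$, where $v$ is the $(s+1)$st child of node $j$. For $s=1$ this is verbatim the stated condition on the right child of $j$; for $s=0$, $v$ is the left child of $j$, which is the immediate $\prec_T$-successor of $j$, so (as $i\neq j$) we get $i\prec_T v\iff i\prec_T j$ and hence $x_i-x_j<0\iff i\prec_T j$. The semiorder form $x_i-x_j>-1$ is just $x_j-x_i<1$ read through the same formula with $i,j$ exchanged. Keeping only the hyperplanes actually present ($s\in\{0,1\}$ for Shi, $s=\pm1$ for semiorder, $s=1$ for Linial) yields the three displayed descriptions. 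I expect the main obstacle to be the semiorder case: because a semiorder move is a double transposition, I must verify that the joint adjacency $\all_i\all_j,\bee_i\bee_j$ corresponds \emph{exactly} to ``$j$ a left child of $i$ with a leaf right-sibling''. The forward direction is immediate from the dictionary, but the converse requires the fact that the $\prec_T$-successor of the leaf right child of $i$ is the right child of the left child $j$ — a short computation comparing drifts (both equal $\drift(i)+1$) and positions in the postfix order, and the only genuinely delicate point in the argument.
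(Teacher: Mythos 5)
Your proposal is correct and follows essentially the same route as the paper: identify regions of the coarse arrangement with $\SS$-maximal sketches via Lemma~\ref{lem:moves=regions}, reduce to locally-maximal sketches via Lemma~\ref{lem:local-max-is-max}, translate the forbidden adjacencies into the tree conditions through Proposition~\ref{prop:bij-sketch-to-tree}, and read off the inequalities from Lemma~\ref{lem:Psi-characterization}. The only (valid) deviation is at the semiorder step, where the paper settles the converse on the word side --- observing that if $\all_i\all_j$ are adjacent and $\bee_i$ is followed by a $\be$-letter then that letter must be $\bee_j$ --- whereas you verify directly on the tree side that the $\prec_T$-successor of the leaf right-child of $i$ is the right child of its left child $j$.
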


\fig{width=\linewidth}{Catalan-arrang-trees+mini}{The Catalan arrangement $\mA_{\{-1,0,1\}}(3)$, and the labeled binary trees corresponding to each region. The trees marked $A$ (including those denoted $A+B$) are in bijection with the regions of the Shi arrangement $\mA_{\{0,1\}}(3)$. The trees marked $B$ (including those denoted $A+B$) are in bijection with the regions of the semiorder arrangement $\mA_{\{-1,1\}}(3)$. The trees marked $A+B$ are in bijection with the regions of the Linial arrangement $\mA_{\{1\}}(3)$.}

Corollary~\ref{cor:bijection-Shi-SO-Linial} is illustrated in Figure~\ref{fig:Catalan-arrang-trees+mini}.

\begin{remark} The method used above works just as well for the braid arrangement: the induced bijection is between the regions of $\mA_{\{0\}}(n)$ and the binary trees with no right child. These trees (which look like paths) are the ones near the origin in Figure~\ref{fig:Catalan-arrang-trees+mini} (note that they are a subsets of the Shi trees). Of course, such a machinery is unnecessary for this simple case, which could be treated with $m=0$, but it illustrates the fact that arrangements corresponding to small values of $m$ are embedded in the bijective framework corresponding to larger values of $m$. 
\end{remark}

\begin{proof}[Proof of Corollary~\ref{cor:bijection-Shi-SO-Linial}]
It is clear that an annotated 1-sketch $\hw$ is Shi locally-maximal if and only if for all $i\in [n]$ the letter $\bee_i$ is not followed by a letter $\all_j$ with $j>i$. By Proposition~\ref{prop:bij-sketch-to-tree}, this means that $\hw$ is Shi locally-maximal if and only if for all $i\in [n]$ the right child of the node $i$ in $T=\phi_1(\hw)$ is not a node $j$ with $j>i$. In other words, $\hw$ is Shi locally-maximal if and only if $\phi_1(\hw)$ is in $\mT_{\{0,1\}}$. 
 
Similarly, an annotated 1-sketch $\hw$ is semiorder locally-maximal if and only if for all $i\in [n]$ the letters $\all_i$ and $\bee_i$ are not followed by the letters $\all_j$ and $\bee_j$ respectively, with $j>i$. Note that if $\all_i$ is followed by $\all_j$ and $\bee_i$ is followed by a $\be$-letter, then this letter is necessarily $\bee_j$. Thus, by Proposition~\ref{prop:bij-sketch-to-tree}, $\hw$ is semiorder locally-maximal if and only if no node $i\in [n]$ of $T$ has both a left child which is a node $j>i$ and a right child which is a leaf. Thus, $\hw$ is semiorder locally-maximal if and only if $\phi_1(\hw)$ is in $\mT_{\{-1,1\}}$. 

Lastly, an annotated 1-sketch $\hw$ is Linial locally-maximal if and only if it is both Shi locally-maximal and semiorder locally-maximal. Thus $\hw$ is Linial locally-maximal if and only if $\phi_1(\hw)$ is in $\mT_{\{0,1\}}\cap\mT_{\{-1,1\}}=\mT_{\{1\}}$. 

Moreover, the description of the bijection $\Psi_{S}$ is immediate from Lemma~\ref{lem:Psi-characterization} as the inequalities defining the region $\Psi_{S}(T)$ are a subset of the inequalities defining the region $\Psi_{1}(T)$ (the inequalities of the form $x_i-x_j<s$ or $x_i-x_j>s$ for $i<j$ and $s\in S$).
\end{proof}

\begin{proof}[Proof of Lemma~\ref{lem:local-max-is-max}]
We first treat the case of the Shi arrangement. Let $\hw=\hw_1\cdots\hw_{2n}$ and $\hw'=\hw_1'\cdots\hw_{2n}'$ be two 1-sketches. Suppose that $\hw$ and $\hw'$ are Shi equivalent. It is easy to see (by induction on the number of Shi moves), that 
\begin{compactitem}
\item[(a)] for all $i,j\in[n]$, $\all_i$ appears before $\all_j$ in $\hw$ if and only if $\all_i$ appears before $\all_j$ in $\hw'$, 
\item[(b)] for all $i>j\in[n]$, $\bee_i$ appears before $\all_j$ in $\hw$ if and only if $\bee_i$ appears before $\all_j$ in $\hw'$.
\end{compactitem}
Now suppose that $\hw$ is Shi locally-maximal and $\hw'$ is Shi maximal. We want to show $\hw=\hw'$. Suppose by contradiction that they are different, and let $p\in[2n]$ be such that $\hw_p\neq \hw_p'$ and $\hw_k=\hw_k'$ for all $k\in[p-1]$.
Since $\hw\prec \hw'$, either 
\begin{compactitem} 
\item[(i)] $\hw_p=\bee_i$ and $\hw_p'=\bee_j$ with $i<j$,
\item[(ii)] $\hw_p=\all_i$ and $\hw_p'=\all_j$ with $i<j$,
\item[(iii)] $\hw_p=\bee_i$ and $\hw_p'=\all_j$ for some $i,j$.
\end{compactitem}
However case (i) is impossible for 1-sketches: if $\hw_k=\hw_k'$ for all $k\in[p-1]$, $\hw_p=\bee_i$ and $\hw_p'=\bee_j$ then $i=j$. Moreover case (ii) is impossible by (a). Hence (iii) holds. Let $q>p$ be such that $\hw_{q}=\all_j$. 
By (a) and (b), we must have $\hw_{q-1}=\bee_k$ with $k<j$. But this contradicts the fact that $\hw$ is Shi locally-maximal. Hence $\hw=\hw'$ as wanted.

Next, we treat the case of the semiorder arrangement. Given $\hw\in\mD^{1}(n)$, we say that $i,j\in[n]$ are $\hw$-\emph{exchangeable} if in $\hw$ the letters $\all_i$ and $\all_j$ are separated only by $\al$-letters, and $\bee_i$ and $\bee_j$ are separated only by $\be$-letters. Let $\hw,\hw'\in\mD^{(1)}(n)$ be two semiorder equivalent 1-sketches. It is easy to see that $\hw'$ is obtained from $\hw$ by replacing the letters $\all_{i}$ and $\bee_{i}$ by $\all_{\pi(i)}$ and $\bee_{\pi(i)}$ for a permutation $\pi$ of $[n]$ such that for all $i\in[n]$, $i$ and $\pi(i)$ are $\hw$-exchangeable.
Now suppose that $\hw$ is semiorder locally-maximal and $\hw'$ is semiorder maximal. We want to show $\hw=\hw'$. Suppose by contradiction that they are different, and let $p\in[2n]$ be such that $\hw_p\neq \hw_p'$ and $\hw_k=\hw_k'$ for all $k\in[p-1]$.
Since $\hw\prec \hw'$, either (i), (ii) or (iii) holds. However (i) is impossible as before, and (iii) is impossible because the parenthesis systems underlying $\hw$ and $\hw'$ are equal. Hence, (ii) holds. 
By the remark above, $i,j$ are $\hw$-equivalent. Hence denoting $\hw_{p+d}=\all_j$, we get that for all $c\in[d]$ the letter $\hw_{p+c}$ has the form $\all_{i_c}$ for some $i_c$ which is $\hw$-exchangeable with $i$. Since $\hw$ is locally maximal, we have $i>i_1>\cdots>i_d=j$. This contradicts $i<j$, hence $\hw=\hw'$ as wanted.


Lastly, we treat the case of the Linial arrangement. Let $\hw,\hw'\in\mD^{(1)}(n)$ be two Linial equivalent 1-sketches. It is easy to see that (b) holds. Suppose now that $\hw$ is Linial locally-maximal and $\hw'$ is Linial maximal. We want to show $\hw=\hw'$. Suppose by contradiction that they are different, and let $p\in[2n]$ be such that $\hw_p\neq \hw_p'$ and $\hw_k=\hw_k'$ for all $k\in[p-1]$. Since $\hw\prec \hw'$, either (i), (ii) or (iii) holds. However (i) is impossible as before, so that $\hw'_p=\all_j$ for some $j\in[n]$. Let $d>0$ such that $\hw_{p+d}=\all_j$. 
Suppose first that $\hw_{p},\hw_{p+1},\ldots,\hw_{p+d}$ are all $\al$-letters. We denote $\hw_{p+c}=\all_{i_c}$ for all $c\in[0..d]$. 
In this case, $i_0<i_d=j$ (since $\hw\prec\hw'$), hence taking the least index $i_c$ we have $i_c<i_{c+1}$ and $i_c<j$. 
Since $\hw$ is Linial locally-maximal, the letter following $\bee_{i_c}$ has the form $\all_k$ with $k<i_c$ (otherwise it would be $\bee_{i_{c+1}}$ or $\all_k$ with $k>i_c$ and one could do an increasing Linial move)
Lastly, since $k<i_c,j$ and $\all_k$ is between $\bee_{i_c}$ and $\bee_{j}$ in $\hw$, property (b) implies that $\all_k$ is between $\bee_{i_c}$ and $\bee_{j}$ in $\hw'$. Hence $\all_{i_c}$ appears before $\all_j$ in $\hw'$. We reach a contradiction. It remains to treat the case where $\{\hw_{p},\hw_{p+1},\ldots,\hw_{p+d-1}\}$ contains a $\be$-letter. Let $\bee_i$ be the last $\be$-letter before $\all_j$ in $\hw$, and let $\all_{i_0}$ be the letter following $\bee_i$. By (b), we have $i<j$. Moreover, since $\hw$ is Linial locally-maximal, $i_0<i$. Since $i_{0}<j$ and all the letters between $\all_{i_0}$ and $\all_j$ are $\al$-letters, the same reasoning as before leads to a contradiction. Hence $\hw=\hw'$ as wanted.
\end{proof}

\medskip
\subsection{General bijection for transitive deformation of the braid arrangement.}\label{sec:bij-gle}~\\
In this section we generalize the strategy adopted in Section~\ref{sec:bij-gle} in order to establish bijections between regions of arbitrary transitive deformations of the braid arrangement, and trees.

We fix a positive integer $N$ and an ${N\choose 2}$-tuple of finite sets of integers $\SS=(S_{i,j})_{1\leq i<j\leq N}$. Recall that $\mA_{\SS}$ is the arrangement in $\RR^N$ made of the hyperplanes $H_{i,j,s}$ for all $1\leq i<j\leq N$ and all $s\in S_{i,j}$. Recall also that when $\SS$ is transitive, the regions of $\mA_{\SS}$ are equinumerous to the trees $\mT_\SS$ defined in Definition~\ref{def:mTSS}.
For $T\in\mT_\SS$, we denote by $\Psi_\SS(T)$ the set of points $(x_1,\ldots,x_N)$ satisfying the following inequalities for all $1\leq i<j\leq N$ and $s\in S_{i,j}$:
\begin{compactitem} 
\item for $s\geq 0$, $x_i-x_j<s$ if the node $i$ is less than the $(s+1)$st child of the node $j$ in the $\prec_T$ order, and $x_i-x_j>s$ otherwise,
\item for $s< 0$, $x_i-x_j>s$ if the node $j$ is less than the $(-s+1)$st child of the node $i$ in the $\prec_T$ order, and $x_i-x_j<s$ otherwise.
\end{compactitem}
Our goal is to establish the following result.

\begin{theorem}\label{thm:bij-gle}
If $\SS=(S_{i,j})_{1\leq i<j\leq N}$ is transitive (see Definition~\ref{def:transitive-multi}), then $\Psi_\SS$ is a bijection between the set $\mT_\SS$ of trees and the regions of $\mA_\SS$.
\end{theorem}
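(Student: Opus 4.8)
The plan is to realize each region of $\mA_\SS$ as a union of regions of the finer Catalan arrangement $\mA_{[-m..m]}(N)$, and to single out a canonical representative in each such union, exactly as was done for the Shi, semiorder, and Linial arrangements in Section~\ref{sec:bij-m=1}. The new point is that in the general case I would \emph{not} prove directly that local maximality coincides with global maximality (the analogue of Lemma~\ref{lem:local-max-is-max}); instead I would deduce this a posteriori from the counting formula of Theorem~\ref{thm:unsigned-count-multi}, which is where transitivity enters.

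First I would check that $\Psi_\SS$ is well defined, i.e. that $\Psi_\SS(T)$ is a single nonempty region of $\mA_\SS$ for each $T\in\mT_\SS$. Since $\Psi_\SS(T)$ is cut out by choosing one open side of every hyperplane of $\mA_\SS$, it suffices to show it is nonempty. Comparing its defining inequalities with those of the fine region $\Psi_m(T)$ given by Lemma~\ref{lem:Psi-characterization} — using the symmetry $x_i-x_j>s\iff x_j-x_i<-s$ to handle negative $s$ — shows that the inequalities defining $\Psi_\SS(T)$ form a subset of those defining $\Psi_m(T)$. Hence $\emptyset\neq\Psi_m(T)\subseteq\Psi_\SS(T)$, so $\Psi_\SS(T)$ is exactly the region of $\mA_\SS$ containing $\Psi_m(T)$, and $\Psi_\SS$ is a well-defined map from $\mT_\SS$ to the regions of $\mA_\SS$.

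Next I would transport everything to annotated $m$-sketches via the bijection $\Phi_m=\phi_m\circ\sigma_m$ of Section~\ref{sec:bij-prelim}. Generalizing the Shi, semiorder, and Linial moves, I would introduce for each $i<j$ and each $s\in[-m..m]\setminus S_{i,j}$ a move on $\mD^{(m)}(N)$ corresponding to crossing the hyperplane $H_{i,j,s}\notin\mA_\SS$; the analogue of Lemma~\ref{lem:moves=regions} then asserts that two sketches are move-equivalent if and only if their fine regions lie in the same region of $\mA_\SS$, so that regions of $\mA_\SS$ are in bijection with move-equivalence classes. Equipping $\mD^{(m)}(N)$ with the lexicographic order extending the one of Section~\ref{sec:bij-m=1}, I would then prove the purely local statement generalizing Corollary~\ref{cor:bijection-Shi-SO-Linial}: using Proposition~\ref{prop:bij-sketch-to-tree} to read parent–child relations from consecutive letters, a sketch $\hw$ admits no increasing move (is \emph{locally maximal}) if and only if $T=\phi_m(\hw)$ lies in $\mT_\SS$, the condition $\ls(v)\in S_{u,v}^-$ of Definition~\ref{def:mTSS} at each cadet pair $\cadet(u)=v$ being precisely the absence of an available increasing move there. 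This step is local and uses no transitivity.

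Finally I would close by counting. Every move-equivalence class contains its lexicographically largest element, which is locally maximal, so the assignment sending a class to its lex-maximal sketch injects the set of classes into the set of locally maximal sketches. By the previous step the latter is in bijection (via $\phi_m$) with $\mT_\SS$, whose cardinality equals the number of regions of $\mA_\SS$ by Theorem~\ref{thm:unsigned-count-multi} — the only place transitivity is used. Since the number of classes also equals the number of regions, this injection between finite sets of equal size is a bijection: each class then contains exactly one locally maximal sketch, and it is the global maximum of the class. Composing the bijections region $\leftrightarrow$ class $\leftrightarrow$ locally maximal sketch $\leftrightarrow$ tree in $\mT_\SS$ gives a bijection whose inverse sends $T$ to the region containing $\Psi_m(T)$, which is $\Psi_\SS(T)$ by the first step; hence $\Psi_\SS$ is a bijection. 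The main obstacle is the faithful generalization of the moves (Lemma~\ref{lem:moves=regions} and Corollary~\ref{cor:bijection-Shi-SO-Linial}) to arbitrary $s\in[-m..m]$, where one must track the interaction of $\al$-letters and $\be$-letters and the non-nesting constraints; the genuine difficulty of the $m=1$ Lemma~\ref{lem:local-max-is-max}, by contrast, is entirely bypassed by the counting step.
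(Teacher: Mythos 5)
Your proposal follows essentially the same route as the paper: identify regions of $\mA_\SS$ with $\SS$-move-equivalence classes of annotated $m$-sketches, show purely locally that the sketches admitting no increasing move are exactly those mapped by $\phi_m$ to $\mT_\SS$ (the paper's Lemma~\ref{lem:trees=local-max}), observe that each class's lex-maximum is locally maximal so that $\Psi_\SS$ is surjective, and then invoke the count of Theorem~\ref{thm:unsigned-count-multi} to upgrade this to a bijection. The paper likewise bypasses any general analogue of Lemma~\ref{lem:local-max-is-max} via this counting step, so your argument is correct and matches the published proof in all essentials.
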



\begin{remark} The bijections $\Psi_\SS$ are compatible with refinements of arrangements. Indeed $\mA_{\SS'}$ is a refinement of $\mA_{\SS}$ if and only if $\SS'=(S_{i,j}')_{1\leq i<j\leq N}$, with $S_{i,j}\subseteq S_{i,j}'$ for all $i,j$. In this case, $\mT_\SS\subseteq \mT_{\SS'}$, and for all $T\in \mT_\SS$, $\Psi_{\SS'}(T)\subseteq \Psi_{\SS}(T)$. 
\end{remark}

Our strategy to prove Theorem~\ref{thm:bij-gle} is the same as in Section~\ref{sec:bij-m=1}. Let $m=\max(|s|,~s\in \cup S_{a,b})$, so that $\mT_\SS$ is a subarrangement of $\mA_{[-m..m]}(N)$. We will think of regions of $\mA_\SS$ as equivalence class of regions of $\mA_{[-m..m]}(N)$, and the bijection $\Phi_m$ defined in Section~\ref{sec:bij-prelim} will induce a bijection $\Phi_\SS$ between regions of $\mA_\SS$ and $\mT_\SS$.
 
\begin{definition}
Let $\hw,\hw'$ be annotated $m$-sketches of size $N$. 
Let $i,j\in[N]$ with $i<j$, and let $s\in[-m..m]$.
We say that $\hw$ and $\hw'$ are related by a \emph{$(i,j,s)$-move} if for all $k\in[0..m]\cap[-s..m-s]$ the pair of letters $\{\al_i^{(k)},\al_j^{(s+k)}\}$ are consecutive in $\hw$, and $\hw'$ is obtained from $\hw$ by swapping each of these pairs of letters.
A \emph{$\SS$-move} is any $(i,j,s)$-move with $1\leq i<j\leq N$, and $s\notin S_{i,j}$. 
We say that $\hw$ and $\hw'$ are \emph{$\SS$-equivalent} if one can be obtained from the other by performing a series of $\SS$-moves.
\end{definition}

\begin{example} Let $m=1$. The Shi moves defined in Section~\ref{sec:bij-m=1} are all the $(i,j,-1)$-moves, and the semiorder moves are all the $(i,j,0)$-moves. 
Hence the Shi (resp. semiorder, Linial) moves are the $\SS$-moves for the tuple $\SS=(S_{i,j})_{1\leq i<j\leq N}$ with $S_{i,j}=\{0,1\}$ (resp. $S_{i,j}=\{-1,1\}$, $S_{i,j}=\{1\}$) for all $1\leq i<j\leq N$. 
\end{example}

We consider the following order $\prec$ on the alphabet $\mA^{(m)}(N)$: $\al_i^{(s)}\prec \al_j^{(t)}$ if either $s>t$, or $s=t$ and $i<j$.
We now consider the lexicographic order $\prec$ on $\mD^{(m)}(n)$ corresponding the order $\prec$ on the letters.
An annotated $m$-sketch $\hw\in \mD^{(m)}(n)$ is \emph{$\SS$-locally-maximal} if it is greater than any $m$-sketch obtained from $\hw$ by a single $\SS$-move. It is \emph{$\SS$-maximal} if it is greater than any $\SS$-equivalent $m$-sketch. Lastly, a region $\rho$ of $\mA_{[-m..m]}(N)$ is said \emph{$\SS$-maximal} if the annotated $m$-sketch $\si_m(\rho)$ is $\SS$-maximal. We now establish two easy lemmas.

\begin{lemma}\label{lem:regions=max}
Each region of $\mA_\SS$ contains a unique $\SS$-maximal region of $\mA_{[-m..m]}(N)$. 
\end{lemma}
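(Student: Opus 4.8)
The plan is to reduce the statement to the triviality that a finite totally ordered set has a unique maximum, once one knows that the regions of $\mA_{[-m..m]}(N)$ lying inside a fixed region of $\mA_\SS$ form exactly one $\SS$-equivalence class. Since every $S_{i,j}\subseteq[-m..m]$, the arrangement $\mA_\SS$ is a subarrangement of $\mA_{[-m..m]}(N)$, so each region $R$ of $\mA_\SS$ is a disjoint union of regions of $\mA_{[-m..m]}(N)$; via the bijection $\si_m$ of Section~\ref{sec:bij-prelim} these correspond to a finite nonempty set $E_R\subseteq\mD^{(m)}(N)$ of annotated $m$-sketches. Granting that $E_R$ is one $\SS$-equivalence class, the lemma follows at once: the lexicographic order $\prec$ is total, so $E_R$ has a unique maximum $\hw^\ast$, which is by definition the unique $\SS$-maximal sketch in $E_R$; then $\si_m^{-1}(\hw^\ast)$ is the unique $\SS$-maximal region of $\mA_{[-m..m]}(N)$ contained in $R$.

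First I would record the geometric meaning of an $(i,j,s)$-move, generalizing the computation preceding Lemma~\ref{lem:moves=regions}. Recall that $\si_m(\xx)$ records the linear order of the $(m+1)N$ numbers $\{x_i+k : i\in[N],\,k\in[0..m]\}$, the letter $\al_i^{(k)}$ marking the value $x_i+k$, so $\al_i^{(k)}$ precedes $\al_j^{(l)}$ iff $x_i-x_j<l-k$. If two regions $\rho,\rho'$ of $\mA_{[-m..m]}(N)$ are adjacent across a facet lying in a single hyperplane $H_{i,j,s}$, then on that hyperplane $x_i+k=x_j+(s+k)$ precisely for $k\in[0..m]\cap[-s..m-s]$, while all other listed values stay distinct and separated. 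Hence near the facet the letters $\al_i^{(k)}$ and $\al_j^{(s+k)}$ are consecutive for each such $k$, and crossing $H_{i,j,s}$ inverts exactly these pairs, leaving the rest of the order unchanged; this is exactly an $(i,j,s)$-move. Conversely, the consecutiveness condition in the definition of the move is exactly the condition that $\rho$ and $\rho'$ be separated only by $H_{i,j,s}$.

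Next I would translate $\SS$-equivalence into the geometry of $\mA_\SS$. The hyperplanes of $\mA_{[-m..m]}(N)$ not belonging to $\mA_\SS$ are exactly the $H_{i,j,s}$ with $s\notin S_{i,j}$, i.e. exactly those realized by $\SS$-moves. Thus two sketches are $\SS$-equivalent iff the corresponding regions can be joined by a sequence of facet-crossings through hyperplanes of $\mA_{[-m..m]}(N)\setminus\mA_\SS$. For two regions $\rho,\rho'\subseteq R$ of the same region $R$ of $\mA_\SS$, I would join interior points by a path inside the open connected set $R$ and perturb it generically so that it avoids all codimension-$2$ flats and crosses hyperplanes one at a time; since $R$ meets no hyperplane of $\mA_\SS$, every crossing is through an internal (non-$\SS$) hyperplane, and reading the sketches along the path yields a chain of $\SS$-moves from $\si_m(\rho)$ to $\si_m(\rho')$. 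Conversely, each $\SS$-move crosses a non-$\SS$ hyperplane and so preserves the region of $\mA_\SS$. This shows $E_R$ is a single $\SS$-equivalence class and completes the argument.

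The main obstacle is the second paragraph: making the correspondence between $(i,j,s)$-moves and facet-adjacency across $H_{i,j,s}$ precise. In particular one must check that the consecutiveness condition in the definition of the move is neither too strong nor too weak (it should hold exactly when the two regions share a facet in $H_{i,j,s}$), and that the set of swapped pairs $\{\al_i^{(k)},\al_j^{(s+k)}\}$ is correctly indexed by $k\in[0..m]\cap[-s..m-s]$ for both signs of $s$. Everything else, including the concluding uniqueness-of-maximum step, is routine.
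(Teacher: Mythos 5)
Your proposal is correct and follows essentially the same route as the paper: identify $(i,j,s)$-moves with adjacency of regions of $\mA_{[-m..m]}(N)$ across the single hyperplane $H_{i,j,s}$, deduce that $\SS$-equivalence classes of sketches correspond exactly to regions of $\mA_\SS$, and conclude by taking the unique lexicographic maximum in each class. The paper states the move/adjacency correspondence as ``clear'' where you spell out the computation $x_i+k=x_j+(s+k)$ and the generic-path argument, but the logical structure is identical.
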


\begin{proof}
Let $\hw,\hw'\in \mD^{(m)}(N)$, and let $\rho=\si_m^{-1}(\hw)$ and $\rho'=\si_m^{-1}(\hw')$ be the associated regions of $\mA_{[-m..m]}(n)$. It is clear that $\hw$ are $\hw'$ are related by a $(i,j,s)$-move (for $1\leq i<j\leq N$, and $s\in[-m..m]$) if and only if the regions $\rho$ and $\rho'$ are separated only by the hyperplane $H_{i,j,s}$. Thus $\hw$ and $\hw'$ are $\SS$-equivalent if and only if $\rho$ and $\rho'$ are in the same regions of $\mA_\SS$. Thus, for in any region $R$ of $\mA_\SS$, exactly one of the regions $\rho$ of $\mA_{[-m..m]}(N)$ contained in $R$ is $\SS$-maximal.
\end{proof}

\begin{lemma}\label{lem:trees=local-max}
Let $\hw\in \mD^{(m)}(n)$. The sketch $\hw$ is $\SS$-locally-maximal if and only if the tree $\phi_m(\hw)$ is in $\mT_\SS$. In other words, $\phi_m$ induces a bijection between $\SS$-locally-maximal regions of $\mA_{[-m..m]}(N)$ and $\mT_\SS$.
\end{lemma}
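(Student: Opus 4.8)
The plan is to read $\SS$-local-maximality through the $(i,j,s)$-moves and match it against the cadet condition $\ls(v)\in S_{u,v}^-$ defining $\mT_\SS$ (Definition~\ref{def:mTSS}), using Proposition~\ref{prop:bij-sketch-to-tree} as the dictionary between consecutive letters of $\hw$ and child relations in $T=\phi_m(\hw)$; throughout I write $\hw=\psi_m(T)$. First I would analyze the effect of a single $(i,j,s)$-move on the lexicographic value. Such a move inverts the \emph{disjoint} consecutive pairs $\{\al_i^{(k)},\al_j^{(s+k)}\}$, so the comparison of $\hw$ with its image is decided at the leftmost inverted pair, and the move raises the value exactly when the later letter of that pair is the $\prec$-larger one. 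A direct inspection of $\prec$ (smaller superscript, or equal superscript and larger index, means larger) then shows that a move is \emph{increasing} precisely when either $s>0$ and every pair occurs in the order $\al_j^{(s+k)},\al_i^{(k)}$ (call this configuration (B)), or $s\le 0$ and every pair occurs in the order $\al_i^{(k)},\al_j^{(s+k)}$ (configuration (A)). A key preliminary observation is that for an \emph{applicable} move all pairs lie in the same configuration: the non-nesting axiom (c) of annotated sketches propagates the order of the level-$k$ pair to the level-$(k+1)$ pair. Consequently $\hw$ is $\SS$-locally-maximal if and only if it admits no increasing $\SS$-move.

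The easy direction is to turn an increasing move into a violation of the $\mT_\SS$ condition. In configuration (B) with $s>0$, Proposition~\ref{prop:bij-sketch-to-tree} reads off each consecutive pair: $\al_j^{(s)}$ followed by $\al_i^{(0)}$ gives that the $(s{+}1)$st child of $j$ is the node $i$, while $\al_j^{(s+k)}$ followed by $\al_i^{(k)}$ with $k\ge 1$ gives that the later children of $j$ are leaves; hence $\cadet(j)=i$ and $\ls(i)=s$. Since $i<j$ and $s\notin S_{i,j}$, this is exactly a pair $\cadet(u)=v$ with $u>v$ and $\ls(v)\notin S_{u,v}^-$, so $T\notin\mT_\SS$. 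The symmetric reading of configuration (A) (here $s\le 0$) yields $\cadet(i)=j$ with $\ls(j)=-s$, again a violating pair but with $u<v$. Thus any increasing $\SS$-move forces $T\notin\mT_\SS$.

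For the converse I must run the dictionary backwards, and this is the step I expect to be the main obstacle: given a violating pair $\cadet(u)=v$ with $\ls(v)=s_0$, the associated move $((v,u,s_0)$ when $u>v$, or $(u,v,-s_0)$ when $u<v$) is a genuine $\SS$-move (one checks $i<j$ and $s\notin S_{i,j}$ from the violation), but I must show that \emph{all} of its pairs are actually consecutive in $\hw$, not merely the first. I would isolate this as a lemma: \emph{if $v=\cadet(u)$ with $\ls(v)=s_0$, then $\al_u^{(s_0+r)}$ is immediately followed by $\al_v^{(r)}$ for every $r\in[0..m-s_0]$.} The base case $r=0$ is the general fact (from the proof of Proposition~\ref{prop:bij-sketch-to-tree}) that a node is immediately $\prec_T$-followed by its first child. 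For the inductive step, axiom (c) delivers the correct order $\al_u^{(s_0+r+1)}\prec_T\al_v^{(r+1)}$; consecutiveness I would prove by contradiction. If some letter were squeezed strictly between them, take the immediate successor $\al_c^{(e)}$ of $\al_u^{(s_0+r+1)}$; because the $(s_0{+}r{+}1)$st child of $u$ is a leaf (as $v=\cadet(u)$), Proposition~\ref{prop:bij-sketch-to-tree} forces $e\ge 1$, and two applications of the contrapositive of (c) then place $\al_c^{(e-1)}$ strictly between $\al_u^{(s_0+r)}$ and $\al_v^{(r)}$, contradicting the inductive hypothesis.

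Granting this lemma, the corresponding move is applicable and sits in configuration (B) (if $u>v$) or (A) (if $u<v$), hence is increasing by the first paragraph, so $\hw$ is not $\SS$-locally-maximal. Combining the two directions gives that $\hw$ is $\SS$-locally-maximal if and only if $\phi_m(\hw)\in\mT_\SS$, and since $\phi_m$ is a bijection from $\mD^{(m)}(N)$ to $\mT^{(m)}(N)$ (Proposition~\ref{prop:bij-sketch-to-tree}) it restricts to a bijection between $\SS$-locally-maximal sketches (equivalently, $\SS$-locally-maximal regions of $\mA_{[-m..m]}(N)$) and $\mT_\SS$, which is the assertion of the lemma.
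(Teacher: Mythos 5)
Your proposal is correct and follows essentially the same route as the paper: both characterize when a $(i,j,s)$-move is applicable and lexicographically increasing by looking at the leftmost swapped pair, and then translate this through Proposition~\ref{prop:bij-sketch-to-tree} into the condition $\cadet(u)=v$, $\ls(v)\notin S_{u,v}^-$ defining $\mT_\SS$. The only difference is that your inductive lemma (showing via axiom (c) that all pairs of a violating move are consecutive, not just the first) spells out rigorously a step the paper dispatches with the phrase ``by definition of annotations''.
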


\begin{proof}
Let $\hw\in \mD^{(m)}(n)$ and let $T=\phi_m(\hw)$. 
For $0\leq i<j\leq n$, and $s\in[m]$, a $(i,j,s)$-moves on $\hw$ is possible and gives an annotated $m$-sketch $\hw'\succ \hw$ if and only if in $\hw$ the letter $\al_j^{(s)}$ is immediately followed by $\al_i^{(0)}$, and for all $t\in[s+1..m]$, the letters $\al_j^{(t)}$ and $\al_i^{(t-s)}$ are consecutive.
By definition of annotations, this holds if and only if the letter $\al_j^{(s)}$ is immediately followed by $\al_i^{(0)}$, and for all $t\in[s+1..m]$, the letters $\al_j^{(t)}$ is immediately followed by a $\be$-letter.
By Proposition~\ref{prop:bij-sketch-to-tree}, this holds if and only if in the tree $T$ the node $i$ is the $(s+1)$st child of the node $j$, and the right siblings of $i$ are leaves (so that $i=\cadet(j)$ and $\ls(i)=s$).

Similarly, for $0\leq i<j\leq n$, and $s\in[-m..0]$, a $(i,j,s)$-moves on $\hw$ is possible and gives an annotated $m$-sketch $\hw'\succ \hw$ if and only if in $\hw$ the letter $\al_i^{(-s)}$ is immediately followed by $\al_j^{(0)}$ and for all $t\in[-s+1..m]$, the letters $\al_i^{(t)}$ is immediately followed by a $\be$-letter. By Proposition~\ref{prop:bij-sketch-to-tree}, this holds if and only if in the tree $T$ the node $j$ is the $(s+1)$st child of the node $i$, and the right siblings of $j$ are leaves (so that $j=\cadet(i)$ and $\ls(j)=-s$).

Thus $\hw$ is $\SS$-locally-maximal if and only if the tree $T$ satisfies the following property for all $0\leq i<j\leq n$: 
if $i=\cadet(j)$ then $\ls(i)\in S_{i,j}\cup \{0\}$, and if $j=\cadet(i)$ then $-\ls(j)\in S_{i,j}$. This holds if and only if $T$ is in $\mT_\SS$. 
\end{proof}

We now complete the proof of Theorem~\ref{thm:bij-gle}. From Lemma~\ref{lem:Psi-characterization}, it is clear that for any tree $T\in\mT^{(m)}(N)$, $\Psi_\SS(T)$ is the region of $\mA_\SS$ containing the region $\Psi_m(T)$ of $\mA_{[-m..m]}(N)$. Hence, by Lemma~\ref{lem:trees=local-max}, the mapping $\Psi_\SS$ is a surjection between the trees in $\mT_\SS$ and the regions of $\mA_\SS$ containing at least one $\SS$-locally-maximal region of $\mA_{[-m..m]}(N)$. And since any $\SS$-maximal region is $\SS$-locally-maximal, Lemma~\ref{lem:trees=local-max} ensures that $\Psi_\SS$ is a surjection between the trees in $\mT_\SS$ and the regions of $\mA_\SS$ (all this holds even if $\SS$ is not transitive). Now assuming that $\SS$ is transitive, Theorem~\ref{thm:unsigned-count-multi}, ensures that the regions of $\mA_\SS$ are equinumerous to the trees in $\mT_\SS$, so $\Psi_\SS$ is actually a bijection.


\medskip

\section{Concluding remarks}\label{sec:conclusion}
We conclude with some additional links to the literature and some open questions.

\subsection{Bijections for the Shi arrangement.}\label{sec:bij-link-Shi}~\\
We now explain how our bijection $\Psi_{\{0,1\}}$ for the Shi arrangement $\mA_{\{0,1\}}(n)$ relates to the existing bijections described in~\cite{Athanasiadis:bijection-Shi} and~\cite{Stanley:hyperplane-tree-inversions} between regions of $\mA_{\{0,1\}}(n)$ and parking functions of size $n$. The correspondence is represented in Figure \ref{fig:Shi-bijections}. Recall that a \emph{parking function of size $n$} is a $n$-tuple $(p_1,\ldots,p_n)$ of integers in $[0..n-1]$ such that for all $k\in[n]$, $\ds ~k\leq\left|\{i\in[n]~|~p_i<k\}\right|$. 

\fig{width=\linewidth}{Shi-bijections}{The bijection $\Psi_{\{0,1\}}$, the \emph{Athanasiadis-Linusson labeling} and the \emph{Pak-Stanley labeling}.}

The first bijection discovered for the Shi arrangement is the so-called \emph{Pak-Stanley labeling} of the regions described in~\cite{Stanley:hyperplane-tree-inversions} (and earlier in~\cite[Section 5]{Stanley:hyperplane-interval-orders-overview} where Igor Pak is credited for suggesting the labeling in the case $m=1$, without proof). This bijection associates to a region $\rho$ of $\mA_{\{0,1\}}(n)$ the parking function $(p_1,\ldots,p_n)$, where for all $i\in[n]$,
$$p_i=\left|\{k\in[i-1]~|~ x_k<x_i\}\right|\,+\,\left|\{k\in[i+1..n]~|~x_k+1<x_i\}\right|,$$
where $(x_1,\ldots,x_n)$ is any point in the region $\rho$. This is represented in Figure \ref{fig:Shi-bijections}(b).
It follows directly from the definition of $\Psi_{\{0,1\}}$ that for any tree $T\in\mT_{\{0,1\}}(n)$, the Pak-Stanley labeling of the region $\rho=\Psi_{\{0,1\}}(T)$ is the parking function $\la_1(T)=(p_1,\ldots,p_n)$ given by 
$$p_i=|\{k\in[i-1]~|~ \textrm{node } k\prec_T \textrm{node }i\}|\,+\, |\{k\in[i+1..n]~|~ \textrm{right child of node } k\preceq_T \textrm{node }i\}|.$$

Another bijection for the Shi arrangement was established by Athanasiadis and Linusson in~\cite{Athanasiadis:bijection-Shi}. This bijection has two steps. The first step associates to each region $\rho$ of $\mA_{\{0,1\}}(n)$ a \emph{diagram} $\delta(\rho)$. The second step associates to the diagram $\de(\rho)$ a partition function that we call \emph{Athanasiadis-Linusson labeling} of $\rho$.
A reader familiar with~\cite{Athanasiadis:bijection-Shi} will have no difficulty seeing that the diagram $\delta(\rho)$ is closely related to the Shi-maximal 1-sketch that we associated to $\rho$ in Section \ref{sec:bij-m=1}. This induces a correspondence between our bijection and the Athanasiadis-Linusson labeling that we now state (the easy proof is omitted). For $T\in\mT_{\{0,1\}}(n)$, the Athanasiadis-Linusson labeling of the region $\rho=\Psi_{\{0,1\}}(T)$ is the parking function $\la_2(T)=(p_1,\ldots,p_n)$ obtained as follows. For all $i\in[n]$, we consider the path of vertices $v_1,v_2,\ldots,v_\ell$, where $v_1$ is the node $i$, $v_\ell$ is a leaf, and $v_{k+1}$ is the right child of $v_k$ for all $k\in[\ell-1]$. Then, $p_i$ is the number of leaves greater than $v_\ell$ for the $\prec_T$ order. This is represented in Figure \ref{fig:Shi-bijections}(c).

It is not very hard to see that the correspondence $\la_2$ is a bijection, and this is why the bijection in~\cite{Athanasiadis:bijection-Shi} can be considered a close relative of $\Psi_{\{0,1\}}$. However, it is less clear why the correspondence $\la_1$ is a bijection.

\medskip
\subsection{Regions of the Linial arrangements and binary search trees}\label{sec:bij-link-Linial}~\\
We now discuss the Linial arrangement $\mA_{\{1\}}(n)$. Stanley had conjectured that the regions of $\mA_{\{1\}}(n)$ were equinumerous to \emph{binary search trees with $n$ nodes}, that is, trees in $\mT^{(1)}(n)$ satisfying the Condition (iii) of Figure~\ref{fig:condition-Shi-so-Linial}. This fact was proved independently in~\cite{Postnikov:coxeter-hyperplanes} and~\cite{Athanasiadis:finite-field-method}. In~\cite{Postnikov:coxeter-hyperplanes,Postnikov:Thesis} Postnikov and Stanley listed several combinatorial classes equinumerous to the regions of $\mA_{\{1\}}(n)$, and some bijections between them. But, up to now, no bijection was known between these classes and the regions of $\mA_{\{1\}}(n)$. We remedy to this situation by giving bijections between regions of $\mA_{\{1\}}(n)$, the set $\mT_{\{1\}}(n)$ (which was not in the list), and the set $\mB(n)$ of binary search trees with $n$ nodes (which was in the list). The bijection between the regions of $\mA_{\{1\}}(n)$ and $\mT_{\{1\}}(n)$ was established in Section~\ref{sec:bij-m=1} (see also Figure~\ref{fig:Linial-bijection}). We now describe a recursive bijection $\theta$, represented in Figure~\ref{fig:bij-Linial-binarysearch}, between $\mT_{\{1\}}(n)$ and $\mB(n)$. 
\fig{width=.8\linewidth}{bij-Linial-binarysearch}{The recursive bijection $\theta$ from $\mT_{\{1\}}(n)$ to $\mB(n)$. In this example, exactly two of the trees $\theta(T_{j_1}),\theta(T_{j_2}),\theta(T_{j_3})$ have no node, while the third tree is $\theta(T_p)$ for the only integer $p\in\{1,2,4,6,7\}$ such that $T_p$ has at least one node, and the root of $\theta(T_p)$ is less than $v_p$.}


For the tree $\tau_0\in\mT_{\{1\}}(0)$ made of one leaf, we define $\theta(\tau_0)=\tau_0\in\mB(0)$. We now consider $n>0$ and suppose that $\theta$ is a well defined bijection from $\mT_{\{1\}}(k)$ to $\mB(k)$ for all $k<n$. By extension, we may assume that $\theta$ is defined on all order-preserving relabeling of trees in $\mT_{\{1\}}(k)$ for all $k<n$ (with $\theta$ preserving the set of labels). Let $T$ be a tree in $\mT_{\{1\}}(n)$, and let $v_1$ be its root. Let $v_2,v_3,\ldots, v_{k+1}$ be defined by $v_{i+1}=\cadet(v_{i})$ for all $i\in [k]$, and the fact that both children of $v_{k+1}$ are leaves. For $i\in[k]$, let $T_i$ be the subtree of $T$ rooted at the child of $v_i$ which is not $v_{i+1}$; see Figure~\ref{fig:bij-Linial-binarysearch}. We denote by $I$ the subset of $[k]$ such that either $T_i$ is a reduced to a leaf which is the left child of $v_i$, or the root of $\theta(T_i)$ is a node which is greater than $v_i$. Let $i_1<\cdots<i_a=k+1$ be the elements of $I\cup \{k+1\}$ and let $k+1=j_1>\cdots>j_b$ be the elements of $[k+1]\setminus I$. We then define $\theta(T)$ as follows:
\begin{compactitem}
\item $v_{i_1}$ is the root,
\item for all $p\in[a-1]$, the node $v_{i_p}$ has right child $v_{i_{p+1}}$ and left child the root of the subtree $\theta(T_{i_p})$,
\item for all $p\in[b]$, the node $v_{j_p}$ has left child $v_{j_{p+1}}$ (or a leaf for $p=b$), and left child the root of the subtree $\theta(T_{j_p})$.
\end{compactitem}
It is easy to see that $\theta(T)$ is in $\mB(n)$ (since $v_1>v_2>\cdots>v_k$).
It is also easy to see, by induction on $n$, that $\theta$ is a bijection (one of the useful observations to invert $\theta$ is that $\theta$ transforms subtrees $T_j$ which are right leaves into subtrees which are right leaves). The bijection $\theta$ is applied to a tree in $\mT_{\{1\}}(10)$ in Figure \ref{fig:exp-bijection-Linial-binary}.
\fig{width=.7\linewidth}{exp-bijection-Linial-binary}{The bijection $\theta$ from $\mT_{\{1\}}(n)$ to $\mB(n)$.}

\medskip
\subsection{Open questions.}~\\
The braid arrangement is associated to the root system $A_{n-1}$, in the sense that the hyperplane have the form $<\al,\xx>=0$ for the positive roots $\al$ of $A_{n-1}$. The (deformations of) arrangements corresponding to other root systems are known to share some of the properties of (deformations of) the braid arrangement (see e.g.~\cite{Athanasiadis:arrangement-coxeter,Postnikov:coxeter-hyperplanes,Shi:nb-regions-Weyl}). Thus, a natural question is whether the results of the present paper can be extended to this more general setting. Another direction for future research is to use the bijections presented here in order to obtain more refined counting formulas for the regions of deformed braid arrangements, by taking into account additional parameters of these regions (in the spirit of e.g.~\cite{Stanley:hyperplane-tree-inversions,Armstrong:Shi-Ish}). We now state two open questions.

It was shown in Section~\ref{sec:bij-gle}, that when the tuple $\SS$ is not transitive, the mapping $\Psi_\SS$ still gives a surjection between the trees in $\mT_\SS$ and the regions of $\mA_\SS$. Indeed, $\Psi_\SS$ gives a bijection between the subset of trees corresponding to $\SS$-maximal regions and the regions of $\mA_\SS$.
\begin{question} 
For a non-transitive tuple $\SS$, is it possible to characterize a subset $\widetilde{\mT}_\SS$ of $\mT_\SS$ in bijection with the regions of $\mA_\SS$ via $\Psi_\SS$? 
\end{question}
Let us consider for example the non-transitive set $S=\{-2,0,2\}$. The set $\mT_{\{-2,0,2\}}(n)$ contains all the trees in $\mT^{(2)}(n)$ such that if the rightmost child of any node $v$ is a leaf, then the middle child is also a leaf. However, because $\mA_{\{-2,0,2\}}(n)$ is just a dilation of the Catalan arrangement $\mA_{\{1,0,1\}}(n)$, we know that the regions of $\mA_{\{-2,0,2\}}(n)$ are in bijection with the set $\mT^{(1)}(n)$, or equivalently, the set $\widetilde{\mT}_{\{-2,0,2\}}(n)$ of trees in $\mT_{\{-2,0,2\}}(n)$ such that the middle child of any node is a leaf. 
In general, one could hope to find the desired subset $\widetilde{\mT}_\SS$ of $\mT_\SS$ either starting from the counting formulas in terms of boxed-trees (Theorem~\ref{thm:signed-count-multi}) and applying some sign-reversing involutions, or by using more direct bijective considerations.

A related problem is to find a more illuminating proof of our bijective results (Theorem~\ref{thm:bij-gle}). In the case of the Shi, semiorder, and Linial arrangement we gave a direct proof involving Lemma~\ref{lem:local-max-is-max} showing that locally-maximal regions are maximal.
The argument given there can actually be extended to the $m$-Shi, $m$-semiorder and $m$-Linial arrangements discussed in Section~\ref{sec:known-arrangements}. However it is unclear whether such an approach would work in the general case (hence removing the need of using Theorem~\ref{thm:unsigned-count-multi}).
\begin{question} 
Is there a direct, preferably geometric, proof that $\SS$-locally-maximal regions are $\SS$-maximal whenever $\SS$ is transitive?
\end{question}

\noindent \textbf{Acknowledgements:} We thank the referees for several helpful suggestions. We thank Priyavrat Deshpande and Krishna Menon for pointing out a mistake in Section \ref{sec:bij-prelim}.

\bibliographystyle{plain} 
\bibliography{biblio-hyperplanes}

\end{document}